\renewcommand{\leq}{\leqslant}
\renewcommand{\geq}{\geqslant}
\theoremstyle{plain}
\newtheorem{theorem}{Theorem}
\newtheorem{corollary}[theorem]{Corollary}
\newtheorem{proposition}[theorem]{Proposition}
\newtheorem{lemma}[theorem]{Lemma}
\theoremstyle{definition}
\newtheorem*{question}{Conjecture}
\newtheorem{remark}[theorem]{Remark}
\newcommand{\ind}{\mathbbm{1}}
\newcommand{\PP}{\mathbb{P}}
\newcommand{\eps}{\varepsilon}
\newcommand{\map}{\mathbf{M}}
\begin{document}

\title{\bf Universality for random surfaces in unconstrained genus}
\author{\textsc{Thomas Budzinski}\footnote{ENS Paris and Universit\'e Paris-Saclay. E-mail: \href{mailto:thomas.budzinski@ens.fr}{thomas.budzinski@ens.fr}.}, \, \textsc{Nicolas Curien}\footnote{Universit\'e Paris-Saclay and Institut Universitaire de France. E-mail: \href{mailto:nicolas.curien@gmail.com}{nicolas.curien@gmail.com}.} \, and \textsc{Bram Petri}\footnote{Universit\"at Bonn. E-mail: \href{mailto:bpetri@math.uni-bonn.de}{bpetri@math.uni-bonn.de}}}
\date{\today}
\maketitle
%%%----------------------------------------------------------------------------------------------------------

\begin{abstract} Starting from an arbitrary sequence of polygons whose total perimeter is $2n$, we can build an (oriented) surface by pairing their sides in a uniform fashion. Chmutov \& Pittel \cite{chmutov2016surface} have shown that, regardless of the configuration of polygons we started with, the degree sequence of the graph obtained this way is remarkably constant in total variation distance and converges towards a Poisson--Dirichlet partition as $n \to \infty$. We actually show that several other geometric properties of the graph are universal. En route we provide an alternative proof of a weak version of the result of Chmutov \& Pittel using probabilistic techniques and related to the circle of ideas around the peeling process of random planar maps. At this occasion we also fill a gap in the existing literature by surveying the properties of a uniform random map with $n$ edges. In particular we show that the diameter of a random map with $n$ edges converges in law towards a random variable taking only values in $\{2,3\}$.
\end{abstract}

%%%----------------------------------------------------------------------------------------------------------

\begin{figure}[!h]
 \begin{center}
 \includegraphics[height=4cm]{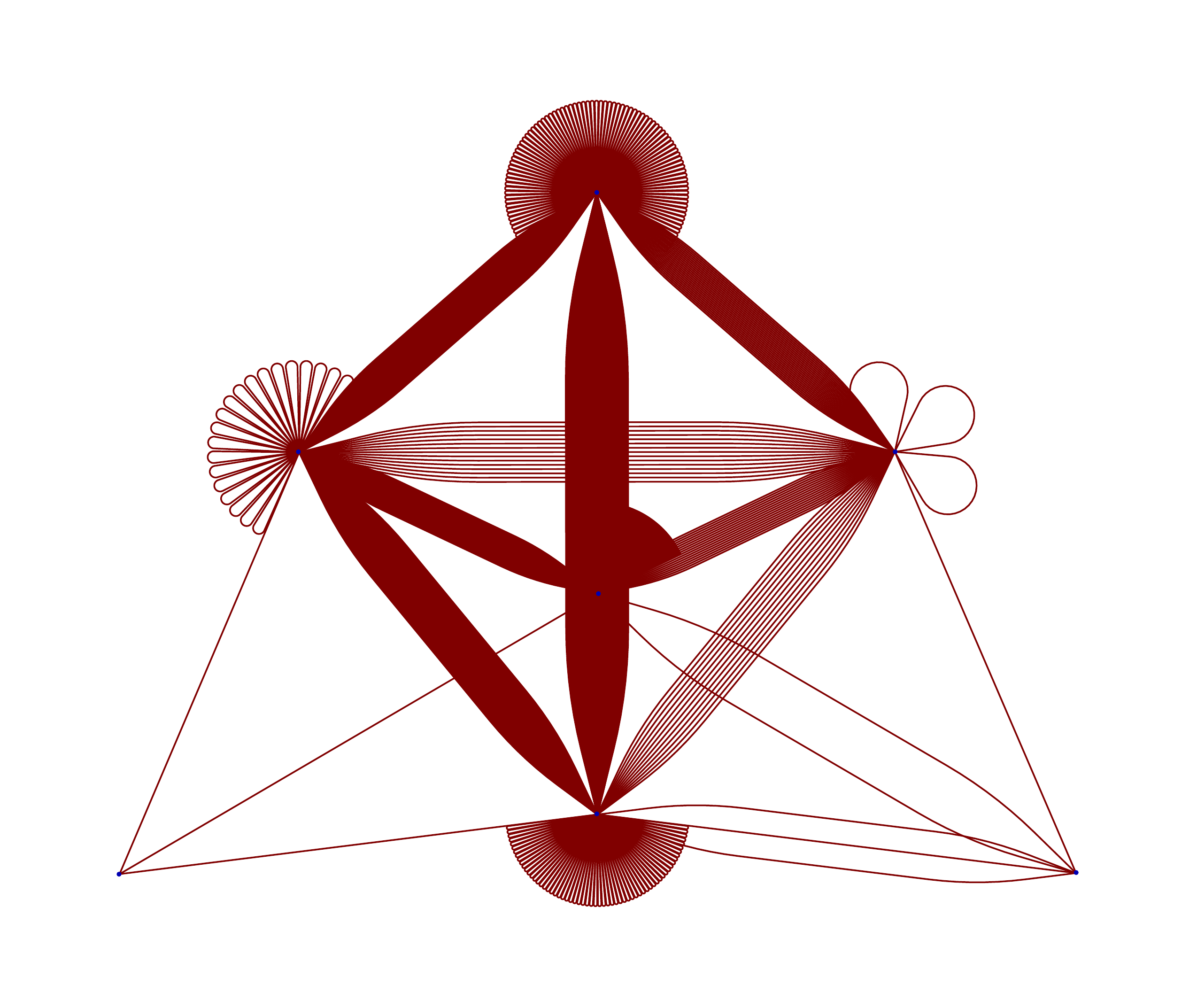}
  \includegraphics[height=4cm]{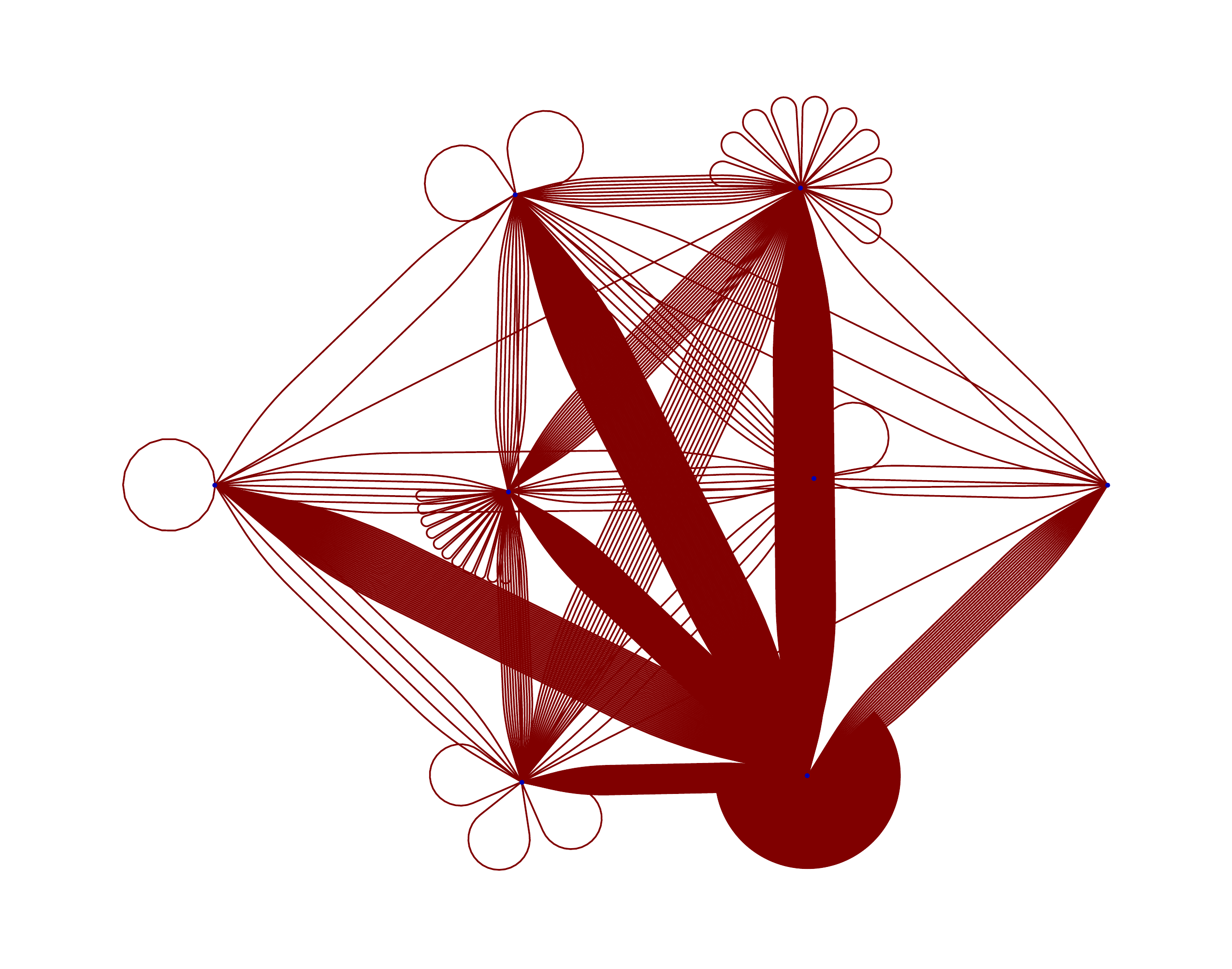}
      \includegraphics[height=4cm]{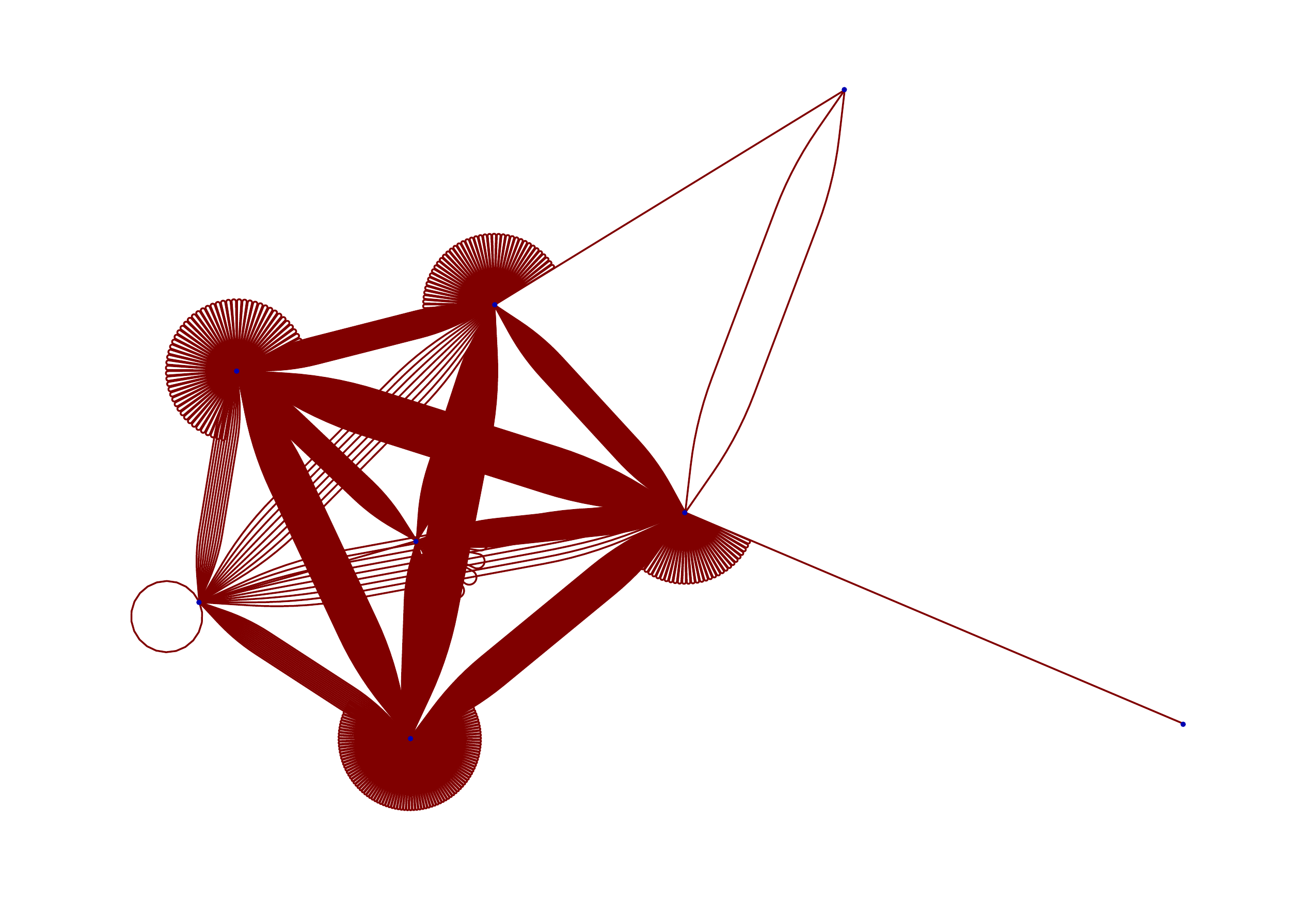}
 \caption{ \label{fig:graphes} Three samples of the graph structure of a uniform random map (genus unconstrained) with $2000$ edges. We see that the graph is highly connected with few vertices carrying many multiple edges and loops. }
 \end{center}
 \end{figure}
\bigskip

% \noindent{\bf {\textsc MSC 2010 Classification}:} Des codes ...\\
% \noindent{\bf Keywords:} Des mots ...

% \bigskip

%%%----------------------------------------------------------------------------------------------------------

\clearpage

\section{Introduction}
\subsection{Gluings of polygons and a conjecture}
Suppose we are given a set of $k\geq 1$ polygons whose perimeters are prescribed by $ \mathcal{P} = \{p_{1}, p_{2}, \dots , p_{k}\}$ where $p_{i} \in \{1,2,3,\dots\}$. We can then form a \emph{random} surface by gluing their sides two-by-two in a uniform manner, see Fig.~\ref{fig:example}.  This model of random surface has been considered e.g.~by Brooks \& Makover \cite{BM04} in the case of the gluing of triangles ($p_{i} = 3$) and later studied by Pippenger \& Schleich \cite{pippenger2006topological} and Chmutov \& Pittel \cite{chmutov2016surface} when $p_{i} \geq 3$. \medskip

\begin{figure}[!h]
 \begin{center}
  \begin{overpic}[width=14cm]{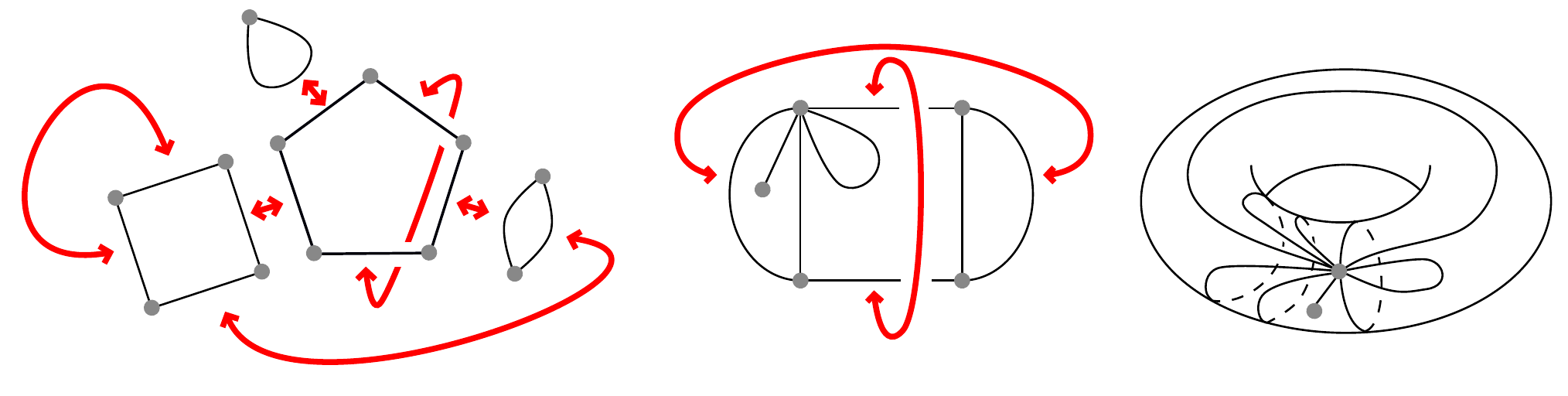}
   \put (17,21) {$1$}
   \put (11.5,9.5) {$4$}
   \put (23,13) {$5$}  
   \put (33,10) {$2$} 
   \put (49,8.5) {$4$}
   \put (53,8.5) {$5$}
   \put (53.5,14.5) {$1$}
   \put (62,8.5) {$2$}   
  \end{overpic} 
 \caption{ \label{fig:example}Creation of a surface by gluing polygons whose perimeters are $1,2,4$ and $5$.}
 \end{center}
 \end{figure}

In this work, we release the constraint that the polygon's perimeters are larger than $3$ but will only consider configurations which do not contain too many $1$ or $2$-gons. If $ \mathcal{P}=\{p_{1}, \dots , p_{k}\}$ is a configuration of polygons, we write 
$$ \# \mathcal{P}=k, \qquad | \mathcal{P}| = \frac{1}{2} \sum_{i=1}^{k} p_{i}, \qquad \mathsf{L}( \mathcal{P}) = \#\{ i : p_{i} =1\}, \qquad \mathsf{B}( \mathcal{P}) = \#\{ i : p_{i} =2\}$$ respectively for half of its total perimeter, the number of $1$-gon (loops) and the number of $2$-gons (bigons) in $ \mathcal{P}$. A sequence  $ (\mathcal{P}_{n})_{n\geq 1}$ of configurations of polygons so that $| \mathcal{P}_{n}|=n$  is said to be \emph{good} if 
 \begin{eqnarray} \label{def:good}  \frac{ \mathsf{L}( \mathcal{P}_{n})}{ \sqrt{n}} \to 0 \quad \mbox{ and }\quad \frac{ \mathsf{B}( \mathcal{P}_{n})}{n} \to 0 \quad \mbox{ as }n \to \infty.  \end{eqnarray}
We focus on good (sequences of) configurations because in this case the surface obtained after the uniform gluing is with high probability connected (see Proposition \ref{prop:connected}).
Specifically, given $ \mathcal{P}$, we randomly label %\footnote{Let us stress that after the labeling, we get a uniform variable on labeled polygons with the prescribed perimeters. Indeed, for symmetry reasons, it is possible that two different labelings yield the same set of labeled polygons, but all the equivalence classes have the same cardinal.} 
the sides of the polygons from $1, 2, \dots, 2| \mathcal{P}|$ in a uniform way and then glue them two by two using an independent pairing of $\{1,2, \dots , 2 | \mathcal{P}|\}$, that is, an involution without fixed points. In all that follows, we only consider oriented surfaces and when we glue two edges we always assume we make sure to preserve the orientation of each polygon\footnote{A heuristic way to formulate this is to imagine that the polygons have two sides, a black side and a white side, and we only glue sides with the same colors when identifying two edges.}. When the gluing is connected, the images of the edges of the polygons form a map $ \mathbf{M}_{ \mathcal{P}}$ with $| \mathcal{P}|$ edges drawn on the  surface created. By abuse of notation we speak of $ \mathbf{M}_{ \mathcal{P}}$ as our ``random surface''. The labeling of the sides of the polygons yields a labeling of the oriented edges of the map by $1,2, \dots , 2 | \mathcal{P}|$: the map $ \mathbf{M}_{ \mathcal{P}}$ is labeled\footnote{Usually a map must be connected and so we put $ \mathbf{M}_{ \mathcal{P}}$ to a cemetery point when the gluing is not connected.}. If we forget the labelings and the orientation of the surface, we get a random multi-graph $\mathbf{G}_{ \mathcal{P}}$, which is the object of study in this work. 

To motivate our results, we start with a conjecture which roughly says that given $|\mathcal{P}|$ and provided the random graph $ \mathbf{G}_{ \mathcal{P}}$ is connected, its law is always the same (in a strong sense), regardless of the configuration of polygons we started with and is close to the law of the random graph obtained from a \emph{uniform random map} with $ | \mathcal{P}|$ edges. However, it is easy to see from Euler's formula that the number of vertices of $ \mathbf{G}_{ \mathcal{P}}$ has the same parity as $| \mathcal{P}|+\# \mathcal{P}$ and so the proper conjecture needs to deal with this parity constraint. Let $ \mathbb{G}_{n}$ be the random graph structure of a uniform random labeled map on $n$ edges, and denote $ \mathbb{G}_{n}^{ \mathrm{odd}}$ (resp. $ \mathbb{G}_{n}^{ \mathrm{even}}$) the random graph $ \mathbb{G}_{n}$ conditioned respectively on having an odd (resp. even) number of vertices.

\begin{question}[Universality for $ \mathbf{G}_{ \mathcal{P}}$] \label{open} Let $ (\mathcal{P}_{n})_{n \geq 1}$ be a good sequence of configurations. We denote by $\epsilon_{n}~\in~\{ \mathrm{even, odd}\}$ the parity of $n + \# \mathcal{P}_{n}$. Then we have
$$ \mathrm{d_{TV}}\left(   \mathbf{G}_{ \mathcal{P}_{n}} ,  \mathbb{G}_{n}^{ \epsilon_{n}}\right) \to 0, \quad \mbox{ as } n \to \infty.$$
\end{question}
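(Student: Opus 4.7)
The conjecture is substantially stronger than the Chmutov--Pittel degree-sequence result, since it asks for total variation closeness of the full graph structure, and I would attack it through a coupling built on a common peeling exploration of the two models. Both $\mathbf{G}_{\mathcal{P}_n}$ and $\mathbb{G}_n^{\epsilon_n}$ admit a description in terms of a pair $(\sigma,\tau)$ on $2n$ labeled half-edges, where $\sigma$ encodes the face structure and $\tau$ is a fixed-point-free involution encoding the gluing. In the $\mathcal{P}_n$-model, $\sigma$ is fixed with cycle type $\mathcal{P}_n$ and only $\tau$ is uniform; for $\mathbb{G}_n$ both $\sigma$ and $\tau$ are uniform and independent, and conditioning on the parity $\epsilon_n$ corrects Euler's constraint. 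The conjecture thus amounts to showing that, once the face labels are forgotten, randomising $\sigma$ over configurations of half-perimeter $n$ has a negligible effect on the graph obtained.

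The plan is to run a canonical peeling exploration that reveals the graph edge by edge: start from a marked half-edge, and at each step select the next unpaired half-edge in a rule depending only on the already-revealed combinatorial data; uncover its match under $\tau$; and record whether it produces a new vertex, a multi-edge, or a loop, updating the boundary accordingly. The crucial point is that this peeling can be defined identically in both models, so a step-by-step coupling is natural. For the uniform model, the transition probabilities at each step depend only on a handful of macroscopic observables (number of unrevealed half-edges, remaining boundary lengths), and the results surveyed in the paper on the uniform random map give explicit expressions. For the $\mathcal{P}_n$-model, the transitions involve $\mathcal{P}_n$ only through the multiplicities of small polygons, and goodness of $\mathcal{P}_n$ together with a Chmutov--Pittel-type input ensures that, throughout the peeling, the relevant boundary-length statistics stabilise to their universal limits.

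The key lemma I would aim for is a quantitative version of the peeling comparison: at each step, the TV distance between the conditional law of the revealed increment under the two models is $o(1/n)$ except on a set of exceptional histories of vanishing total probability. Accumulating over the $\approx n$ peeling steps yields the required TV bound. Parity must be handled carefully, since this is the only global obstruction to coupling; I would incorporate it by introducing an auxiliary flip near the end of the exploration that adjusts parity at negligible cost, thereby matching $\epsilon_n$ on both sides.

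The main obstacle is obtaining $o(1/n)$ per-step TV control rather than weak convergence. A naive combinatorial estimate only yields $O(1/n)$ per step, which sums to $O(1)$ and is useless. Pushing below $1/n$ seems to require either an improved concentration argument — treating the relevant deviations as a martingale with increments of typical size $O(1/n^{3/2})$, as one often does in peeling of random maps — or a sharp enumerative input comparing the number of pairings yielding a given local configuration in the $\mathcal{P}_n$ model with that under the uniform model, accurate to relative error $o(1/n)$. Handling rare but dangerous events (peeling into a loop, or a step touching one of the $\mathsf{L}(\mathcal{P}_n)$ or $\mathsf{B}(\mathcal{P}_n)$ small polygons) will require a separate stopping-time analysis in which the exploration is aborted and the coupling restarted; goodness of $\mathcal{P}_n$ is tailored precisely to make the total probability of such aborts vanish. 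This enumerative refinement is, to my mind, the real heart of the conjecture and the reason it has remained open.
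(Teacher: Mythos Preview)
The statement you are addressing is not proved in the paper: it is explicitly labeled and discussed as an open \emph{Conjecture}. The paper only establishes partial results in its direction --- Theorem~\ref{thm:vertices} (universality for the number of vertices) and Theorem~\ref{thm:weakPD} (Poisson--Dirichlet universality for the edge multiplicities) --- both proved via peeling explorations of exactly the kind you describe. There is thus no proof in the paper to compare your proposal against.

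Your sketch is a reasonable programme and is in the spirit of the paper's own techniques, but it is not a proof, and you yourself say as much. You correctly identify the central obstruction: a na\"ive step-by-step coupling of the peeling explorations gives $O(1/n)$ total variation error per step, which over $n$ steps sums to $O(1)$ and yields nothing. Your suggested remedies (martingale concentration to push increments to $O(n^{-3/2})$, or sharp enumerative comparison to relative error $o(1/n)$) are plausible directions but are not carried out, and your final sentence concedes that this ``enumerative refinement is \ldots\ the real heart of the conjecture and the reason it has remained open.'' That assessment matches the paper's: the authors present the statement as a conjecture supported by Chmutov--Pittel and by their own Theorems~\ref{thm:vertices} and~\ref{thm:weakPD}, but do not claim a proof. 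So there is no gap to point to beyond the one you have already named; the proposal is an honest outline of an approach to an open problem, not a proof attempt that can be graded against the paper.
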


Here and later we write $ \mathrm{d_{TV}}(X,Y)$ for the total variation distance between the laws of two random variables $X$ and $Y$. Compelling evidence for the above conjecture is the result of Chmutov \& Pittel \cite{chmutov2016surface} (generalizing work by Gamburd \cite{gamburd2006poisson} in the case when the polygons have the same perimeter), which asserts that  when all the polygon's perimeters are larger than $3$ then up to an error of $O(1/n)$ in total variation distance, the degree distribution of $ \mathbf{G}_{ \mathcal{P}_{n}}$ is the same\footnote{Their result is expressed in terms of the cycle structure of a uniform permutation over $ \mathfrak{A}_{2n}$ if $k$ and $n$ have the same parity (resp.~$ \mathfrak{A}_{2n}^{c}$ if $k$ and $n$ have different parity) where $   \mathfrak{A}_{2n} \subset  \mathfrak{S}_{2n}$ is the group of alternate permutations over $\{1, 2, \dots, 2n\}$. But given our Theorem \ref{thm:maps==PDgluing} this can be rephrased as the degree distribution of $ \mathbb{G}_{n}^{ \mathrm{odd/even}}$.} as that of $ \mathbb{G}_{n}^{ \epsilon_{n}}$.  The proof of \cite{chmutov2016surface} is based on representation theory of the symmetric group. One of the goal of this work is to give a probabilistic proof of a weak version (Theorem \ref{thm:weakPD}) of the above conjecture. We also take this work as a pretext to gather a few results (some of which may belong to the folklore) on the geometry of a uniform random map with $n$ edges.  \medskip 

\hspace{\fill} In the rest of the paper, all the maps considered are labeled. \hspace{\fill}
\medskip 
\subsection{Geometry of random maps}
For $n \geq 1$, we denote by $  \mathbb{M}_{n}$ a random (labeled) map chosen uniformly among all (labeled) maps with $n$ edges. Recall that its underlying graph structure is $ \mathbb{G}_{n}$. It is well known that the distribution of degrees of a random map is closely related to the cycle structure of a uniform permutation, we make this precise in Theorem \ref{thm:maps==PDgluing}.

\paragraph{Permutations and Poisson--Dirichlet distribution.} For $n \geq 0$ we denote by $ \mathcal{U}_{n} = \{u_{1}^{(n)} \geq u_{2}^{{(n)}}  \geq \dots \geq u_{k}^{(n)}\}$ the cycle lengths in decreasing order of a uniform permutation $ \sigma_{n} \in \mathfrak{S}_{n}$. This distribution is well-known and in particular 
 \begin{eqnarray}\label{eq:countpoisson} \mathrm{d_{TV}}\big( \# \mathcal{U}_{n} , \mathrm{Poisson}( \log n)\big) & \xrightarrow[n\to\infty]{}& 0, \\
  \label{eq:cvPD}  \frac{1}{n} \left(u_{i}^{(n)} \right )_{ i \geq1} \xrightarrow[n\to\infty]{(d)}  \mathsf{PD}(1),  \end{eqnarray}
where the (standard) Poisson--Dirichlet distribution $ \mathsf{PD}(1)$ is a probability measure on partitions of $1$, i.e. on sequences $x_{1}>x_{2}>x_{3}>\cdots$ such that $\sum x_{i} =1$, which is obtained by reordering in decreasing order the lengths $U_{1}, U_{2}(1-U_{1}), U_{3}(1-U_{1})(1-U_{2}), \ldots$ where $(U_{i} : i \geq 1)$ is a sequence of i.i.d.~uniform variables on $[0,1]$. We refer e.g.~to \cite{arratia2000limits} for details. 

Recall our notation\footnote{In order to help the reader make the distinction between the concepts, we kept the bb font $( \mathbb{M}, \mathbb{G}, \mathbb{V},\dots)$ for random variables derived from sampling a uniform random map with a fixed number of edges, and the bf font $( \mathbf{M}, \mathbf{G}, \mathbf{V},\dots)$ for random variables associated with the gluing construction started from a configuration of polygons.} $ \map_{ \mathcal{P}}$ for the  map obtained by the uniform labeling and gluing of the sides of oriented polygons whose perimeters are prescribed by $ \mathcal{P}$ 
%(recall that  when the gluing is not connected we defined $ \map_{ \mathcal{P}}$ to be a cemetery point)
. If $ \mathcal{P}$ is itself random, one can still consider $ \mathbf{M}_{ \mathcal{P}} $ by first sampling the perimeters in $ \mathcal{P}$ and then performing our random gluing.

\begin{theorem}[Random map as a random gluing with discrete Poisson--Dirichlet perimeters] \label{thm:maps==PDgluing} For some constant $C>0$ we have for all $n \geq 1$ $$ \mathrm{d_{TV}}\left( \mathbb{M}_{n} ; \mathbf{M}_{ \mathcal{U}_{2n}} \right) \leq \frac{C}{n}.$$
In words, a random uniform (labeled) map can be obtained, up to a small error in total variation distance, by a random gluing of random polygons whose sides follow the discrete Poisson--Dirichlet law $ \mathcal{U}_{2n}$.
\end{theorem}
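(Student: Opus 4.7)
The plan is to pass to the standard encoding of labeled maps by permutations. A labeled map with $n$ edges is uniquely specified by a pair $(\sigma,\alpha)$ where $\sigma\in\gS_{2n}$ is the rotation of half-edges around each face (so the cycles of $\sigma$ are the face contours, with cycle lengths equal to the face degrees) and $\alpha$ is a fixed-point-free involution on $[2n]$ pairing half-edges into edges, subject to the connectedness condition that $\langle\sigma,\alpha\rangle$ acts transitively on $[2n]$. Under this dictionary, $\mathbb{M}_n$ is the uniform random element of the set of such transitive pairs. For a fixed configuration $\mathcal{P}$, the gluing $\map_{\mathcal{P}}$ corresponds to the pair $(\sigma,\alpha)$ where $\alpha$ is an independent uniform involution and $\sigma$ is uniform among permutations of cycle type equal to the perimeters of $\mathcal{P}$: indeed, a uniform labeling of the $2n$ polygon sides produces $\sigma$ uniform on permutations of the prescribed cycle type, as each such permutation is hit by exactly $z_\lambda=\prod_i i^{m_i}m_i!$ labelings (accounting for cycle-to-polygon assignments and rotational starting points).

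The key sampling identity is that if the perimeters $\mathcal{P}$ are themselves sampled from the cycle-length distribution $\mathcal{U}_{2n}$ and $\sigma$ is then drawn uniformly among permutations of cycle type $\mathcal{P}$, the composite $\sigma$ is unconditionally uniform on $\gS_{2n}$, since
$$\PP(\sigma)=\PP\bigl(\mathrm{type}(\sigma)=\lambda\bigr)\cdot\frac{1}{|\{\sigma':\mathrm{type}(\sigma')=\lambda\}|}=\frac{1/z_\lambda}{(2n)!/z_\lambda}=\frac{1}{(2n)!}.$$
Hence $\map_{\mathcal{U}_{2n}}$ is realised, outside the cemetery, by a uniform pair $(\sigma,\alpha)$ in $\gS_{2n}\times\kI_{2n}$, where $\kI_{2n}$ is the set of fixed-point-free involutions on $[2n]$, the pair being sent to the cemetery precisely when it fails to be transitive. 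Comparing the two uniform measures then gives directly
$$\mathrm{d_{TV}}\bigl(\mathbb{M}_n,\map_{\mathcal{U}_{2n}}\bigr)\;=\;\PP\bigl(\langle\sigma,\alpha\rangle\text{ is not transitive on }[2n]\bigr),$$
so the theorem reduces to showing that this non-transitivity probability is $O(1/n)$.

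This final estimate I would obtain by a union bound over the non-trivial $\langle\sigma,\alpha\rangle$-invariant subsets $S\subset[2n]$. Since $\alpha$ must pair elements of $S$ with each other, $|S|=2k$ is necessarily even, and the probability that a fixed $S$ of size $2k$ is stabilised by both $\sigma$ and $\alpha$ is $\tfrac{(2k)!(2n-2k)!}{(2n)!}\cdot\tfrac{(2k-1)!!(2n-2k-1)!!}{(2n-1)!!}$. Summing over the $\binom{2n}{2k}$ possible sets $S$ collapses the first factor and yields
$$\PP(\text{non-transitive})\;\leq\;\sum_{k=1}^{n-1}\frac{(2k-1)!!\,(2n-2k-1)!!}{(2n-1)!!}.$$
The ratio of consecutive terms equals $(2k+1)/(2n-2k-1)$, so the sum is dominated by the extreme values $k=1$ and $k=n-1$, each equal to $1/(2n-1)$, while the middle terms decay geometrically and contribute only $O(1/n^2)$. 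The whole sum is therefore $O(1/n)$, establishing the theorem. The main obstacle is really just the bookkeeping in the first step: checking that the uniform labeling of polygon sides, after quotienting out the $z_\lambda$ symmetries, produces precisely the uniform distribution on permutations of cycle type $\lambda$, so that the two laws to be compared can be placed on the common sample space $\gS_{2n}\times\kI_{2n}$.
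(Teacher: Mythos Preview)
Your proof is correct and follows the same overall route as the paper: pass to the encoding of labeled maps by pairs $(\sigma,\alpha)\in\mathfrak{S}_{2n}\times\mathcal{I}_{2n}$, observe that sampling perimeters from $\mathcal{U}_{2n}$ and then labeling the sides uniformly yields $\sigma$ uniform on $\mathfrak{S}_{2n}$, and reduce the total variation bound to the probability that a uniform pair $(\sigma,\alpha)$ is not transitive. The only difference lies in how this last probability is estimated. The paper (Proposition~\ref{prop:count_connected_maps}) writes a recursion for $\#\mathcal{C}_{2n}$ by partitioning according to the orbit of the point $1$, iterates it into an alternating sum, and extracts the precise asymptotic $\#\mathcal{C}_{2n}/((2n)!(2n-1)!!)=1-\tfrac{1}{2n}+O(1/n^{2})$. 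You instead take a direct union bound over invariant even subsets, obtaining $\sum_{k=1}^{n-1}(2k-1)!!(2n-2k-1)!!/(2n-1)!!$, which equals $\tfrac{2}{2n-1}+O(1/n^{2})$. Your leading constant is twice the true one (each two-block orbit partition $\{S,S^{c}\}$ is counted once for $S$ and once for $S^{c}$), but this is irrelevant for the stated $O(1/n)$ bound and makes the argument slightly shorter.
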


We can deduce several consequences of the above result. First of all, the faces degrees in $  \mathbb{M}_{n}$ have the same law as  $ \mathcal{U}_{2n}$ up to a small error in total variation distance. By well-known results on the distribution of $ \mathcal{U}_{2n}$ (see \cite{arratia2000limits}) this shows that the number of faces of degree $1,2,3,\dots$ in $  \mathbb{M}_{n}$ converge jointly towards independent Poisson random variables of means $1, 1/2, 1/3,\dots$. On the other side, by \eqref{eq:cvPD} the large face degrees, once rescaled by $1/(2n)$, converge towards $ \mathsf{PD}(1)$. Finally by \eqref{eq:countpoisson}, the number of faces $\# \mathsf{F}(  { \map}_{n})$ in $  \mathbb{M}_{n}$ is close in total variation distance to $ \# \mathcal{U}_{2n}$ and thus of $ \mathrm{Poisson}( \log n)$.

 Since $  \mathbb{M}_{n}$ is self-dual, the same results hold for $ { \mathbb{M}}_{n}^{\dagger}$ the dual map of ${ \mathbb{M}}_{n}$. We shall prove in Proposition~\ref{prop:euler} that $\# \mathsf{F}(   \mathbb{M}_{n}), \# \mathsf{V}(  {\mathbb{M}}_{n})$ the number of faces and vertices of $ { \mathbb{M}}_{n}$ and its genus $ \mathsf{Genus}( {\mathbb{M}}_{n})$ obey
  \begin{eqnarray} \label{eq:eulerscaling} \left( \frac{\# \mathsf{V}( \mathbb{M}_{n}) -\log n}{ \sqrt{\log n}} , \frac{\# \mathsf{F}( \mathbb{M}_{n}) -\log n}{ \sqrt{\log n}}, \frac{\mathsf{Genus}( \mathbb{M}_{n}) - \frac{n}{2} + \log n}{ \sqrt{\log n}} \right) \xrightarrow[n\to\infty]{(d)} \left( \mathcal{N}_{1}, \mathcal{N}_{2}, - \frac{\mathcal{N}_{1} + \mathcal{N}_{2}}{2}\right),   \end{eqnarray}
where $ \mathcal{N}_{1}$ and $ \mathcal{N}_{2}$ are independent standard Gaussian random variables. The convergences of the first and second components alone follow from the discussion above. The perhaps surprising phenomenon is that the number of vertices and faces of $ { \mathbb{M}}_{n}$ are asymptotically independent: this is a consequence of our forthcoming Theorem \ref{thm:vertices}. This result was also proved recently by Carrance \cite[Theorem 3.14 and its proof]{carrance2017uniform} by using the techniques of \cite{chmutov2016surface}.

\paragraph{Configuration model.}  We will use Theorem \ref{thm:maps==PDgluing} in conjunction with the \emph{self-duality property} of $  \mathbb{M}_{n}$ to deduce an approximate construction of its graph structure. More precisely, if $ \mathsf{Graph}( \mathfrak{m})$ denotes the (multi)graph obtained from a map $ \mathfrak{m}$ by forgetting the labeling and the cyclic orientation of edges  then, up to an error of $O(1/n)$ in total variation distance, we have
 \begin{eqnarray*} \mathbb{G}_{n}= \mathsf{Graph}( \mathbb{M}_{n} ) \underset{ \mathrm{self\ duality}}{\overset{(d)}{=}} \mathsf{Graph}( \mathbb{M}_{n}^{\dagger} ) \underset{ \mathrm{Thm.\ } \ref{thm:maps==PDgluing}}{\overset{ \mathrm{d_{TV}}}{\approx}}\mathsf{Graph}\left( \mathbf{M}_{ \mathcal{U}_{2n}}^{\dagger} \right).\end{eqnarray*}
The point is that   $\mathsf{Graph}( \mathbf{M}_{ \mathcal{U}_{2n}}^{\dagger} )$ has a particularly simple probabilistic construction: it is obtained as a \emph{configuration model} with degrees prescribed by $ \mathcal{U}_{2n}$. Recall that the configuration model with vertices of degrees $d_{1}, \dots , d_{k}$ is the random graph obtained by starting with $k$ vertices having $d_{1}, \dots, d_{k}$ ``legs'' and pairing those legs two by two in a uniform manner. This model was introduced by Bender \& Canfield \cite{bender1978asymptotic} and Bollob\'as \cite{bollobas1980probabilistic} and was later studied in depth, see e.g.~\cite{RemcoRGII}. This remark is used to prove the following striking property (which is new to the best of our knowledge):

\begin{corollary}[The diameter of a random map is $2$ or $3$.]  \label{cor:diameter}There exists a constant $\xi \in (0,1)$ such that
$$\lim_{n \to \infty} \mathbb{P}(  \mathsf{Diameter}(  \mathbb{M}_{n}) = 3) = 1 -\lim_{n \to \infty} \mathbb{P}(  \mathsf{Diameter}( \mathbb{M}_{n}) = 2) = \xi.$$
\end{corollary}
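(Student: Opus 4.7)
The plan is to combine the approximate identity $\mathbb{G}_n \approx \mathsf{Graph}(\mathbf{M}_{\mathcal{U}_{2n}}^{\dagger})$ in total variation distance (displayed in the paragraph preceding the statement) with a structural analysis of the configuration model on the right, whose vertex-degrees are the cycle lengths $\mathcal{U}_{2n}$ of a uniform permutation of $2n$ letters. This degree sequence separates into two scales: an $O_{\mathbb{P}}(1)$ collection of \emph{big} vertices of degree $\Theta(n)$, whose normalised degrees converge to a $\mathsf{PD}(1)$ partition, and an $O_{\mathbb{P}}(1)$ collection of \emph{small} vertices of bounded degree, with asymptotically $\mathrm{Poisson}(1/k)$ vertices of each degree $k$ independently.

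The first observation is that two big vertices of degrees $d_1,d_2 \geq \eps n$ are joined by at least one edge with probability $1 - e^{-\Omega(n)}$, since the expected number of edges between them is $\asymp d_1 d_2/n$ and standard configuration-model concentration applies. Union-bounding over the bounded number of big--big pairs shows that \emph{the big vertices form a clique} with high probability. Next, each half-edge of a small vertex pairs with a half-edge of a big vertex with probability $1 - O(1/n)$, because the big vertices carry all but $O_{\mathbb{P}}(1)$ of the $2n$ half-edges; a further union bound ensures that, with high probability, every small vertex is adjacent to at least one big vertex. Every vertex therefore lies at distance $\leq 1$ from the big clique, so $\mathsf{Diameter}(\mathbb{G}_n) \leq 3$ w.h.p., and the matching lower bound $\geq 2$ is immediate because the number of vertices diverges.

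To see that both values $2$ and $3$ occur with positive limit probability, I would pass to the scaling limit. Under the structure above, on a high-probability event $\mathsf{Diameter}(\mathbb{G}_n)=2$ is equivalent to the condition ``every pair of small vertices shares a common big neighbour''. In the limit, conditional on a $\mathsf{PD}(1)$ mass vector $(X_i)_{i \geq 1}$ and the independent Poisson profile of small-vertex degrees, each small half-edge independently selects index $i$ with probability $X_i$. On the positive-probability event $\{X_1 > 1 - \delta\}$, every (finitely many) small half-edge lands on vertex $1$, forcing diameter $2$; conversely, with positive probability there are at least two degree-$1$ small vertices whose unique big neighbours differ, forcing diameter $3$. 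The constant $\xi \in (0,1)$ is then defined as the limit probability of the latter event. The main obstacle is the joint scaling-limit justification: one must show joint convergence of the big-vertex degrees to $\mathsf{PD}(1)$, of the small-cycle counts to independent Poissons, and of the uniform pairings conditional on the degrees, so that the discrete event ``some pair of small-vertex neighbourhoods is disjoint'' is continuous in the limit. The classical total-variation estimates of Arratia--Barbour--Tavar\'e (cf.~\cite{arratia2000limits}) control the cycle structure, and the configuration-model pairing is uniform given the degrees, so the argument should reduce to a finite, explicit inclusion--exclusion once the relevant finitely many random variables are identified.
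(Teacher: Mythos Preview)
Your overall strategy matches the paper's: reduce to the configuration model on $\mathcal{U}_{2n}$, argue that the big vertices form a clique to which everything connects (giving diameter $\leq 3$), and then identify the diameter-$2$-versus-$3$ event as a continuous functional of the joint $(\mathsf{PD}(1),\text{Poisson})$ limit. The paper proceeds exactly this way via Lemmas~\ref{lem:core} and~\ref{lem:decoupling}, and your description of $\xi$ as the limiting probability that two small vertices have disjoint big neighbourhoods is the paper's as well.

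There is, however, a genuine gap in your two-scale dichotomy. The cycle structure $\mathcal{U}_{2n}$ has about $\log n$ elements, not $O_{\mathbb{P}}(1)$: for any fixed $\delta>0$ and $A<\infty$, only $O_{\mathbb{P}}(1)$ cycles have length $\geq \delta n$ and only $O_{\mathbb{P}}(1)$ have length $\leq A$, but the bulk of the vertices lie in the intermediate range $(A,\delta n)$. In particular your claim that ``the big vertices carry all but $O_{\mathbb{P}}(1)$ of the $2n$ half-edges'' is false: the mass of a $\mathsf{PD}(1)$ partition below level $\delta$ is a strictly positive random variable, so the big vertices carry only a $(1-\eps)$-fraction of the half-edges. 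A given half-edge thus lands on a big vertex with probability $1-\eps$, not $1-O(1/n)$, and your union bound ``every small vertex is adjacent to a big vertex'' does not go through as stated; nor does the dichotomy cover the $\approx \log n$ intermediate vertices at all.

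The paper fixes this with a two-threshold argument (Lemma~\ref{lem:core}). First, one shows that \emph{every} vertex has at least one neighbour in $V^{(n)}_{\delta n}$, using that a vertex of degree $d$ misses the big set with probability at most $\eps^{d}$ and that $\sum_{d\in\mathcal{U}_{2n}}\eps^{d}$ has bounded expectation by~\eqref{eq:sizebias}. Second, and crucially, one shows that every vertex of degree at least some large constant $A$ is adjacent to \emph{all} big vertices. The intermediate vertices are thereby already at distance $\leq 2$ from everything via the big clique, and only the finitely many vertices of degree $<A$ can be responsible for diameter $3$. Once you restrict to those, your limiting description of $\xi$ is correct.
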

The proof of Corollary \ref{cor:diameter} gives an expression of $\xi$ in terms of a rather simple random process involving independent Poisson random variables and a $ \mathsf{PD}(1)$ partition. Unfortunately, we have not been able to transform this expression into a close formula. A numerical approximation shows that $\xi \approx 0.3$. 
\medskip 

As mentioned above, the literature concerning the configuration model is abundant. Notice however that the conditions we impose on our perimeters are very different from the usual ``critical'' conditions that can be found e.g.~in \cite{molloy1998size,joseph2014component}. We also mention the works \cite{broutin2014asymptotics,marzouk2018scaling} which study  respectively random plane trees and random \emph{planar} maps with prescribed degrees.

\subsection{Poisson--Dirichlet universality for random surfaces} We now turn to random maps obtained by gluing polygons of prescribed perimeters. We will prove (Proposition \ref{prop:connected}) that the uniform gluing of polygons yields a \emph{connected} surface with high probability provided that we control the number of loops and bigons. 
\paragraph{Number of vertices.} Recall the notation $ \mathbf{M}_{ \mathcal{P}}$ and $  \mathbf{G}_{ \mathcal{P}}$ respectively for the random map and the corresponding graph created by the uniform labeling/gluing of sides of polygons of $ \mathcal{P}$. We also write $\# \mathbf{V}_{ \mathcal{P}}$ for the number of vertices of $\mathbf{M}_{ \mathcal{P}}$.  Finally, for $\alpha >0$, let $ \mathrm{Poisson}^{ \mathrm{odd}}_{\alpha}$ (resp. $\mathrm{Poisson}^{ \mathrm{even}}_{\alpha}$) be a Poisson variable of parameter $\alpha$ conditioned on being odd (resp. even).
 \begin{theorem}[Universality for the number of vertices] \label{thm:vertices} Let $ (\mathcal{P}_{n})_{n \geq 1}$ be a good sequence of configurations and $\epsilon_{n}$ be the parity of $ n+ \# \mathcal{P}_{n}$.  Then we have
 $$ \mathrm{d_{TV}}\left(  \# \mathbf{V}_{ \mathcal{P}_n}, \mathrm{Poisson}^{  \epsilon_{n}}_{\log n}\right) \to 0, \qquad  \mbox{ as } n \to \infty.$$
 \end{theorem}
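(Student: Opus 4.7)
The task is to show that the number of cycles of $\tau_n := \sigma_{\mathcal{P}_n} \circ \alpha_n$ is close in total variation to a parity-conditioned Poisson, where $\sigma_{\mathcal{P}_n} \in \gS_{2n}$ is the deterministic permutation whose cycles encode the oriented boundaries of the polygons of $\mathcal{P}_n$ and $\alpha_n$ is a uniformly chosen fixed-point-free involution on $[2n]$: indeed, the vertices of $\mathbf{M}_{\mathcal{P}_n}$ are in bijection with the cycles of $\tau_n$.

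The strategy is a lazy exploration of the cycles of $\tau_n$, in the spirit of Feller's coupling for the cycle structure of a uniform random permutation and compatible with the peeling philosophy advertised in the introduction. Start at $x_0 = 1$, iterate $x_{t+1} = \sigma_{\mathcal{P}_n}(\alpha_n(x_t))$, revealing $\alpha_n(x_t)$ uniformly among the unpaired half-edges only when needed; close a cycle when the walk returns to its start, then restart from the smallest not-yet-visited half-edge. The walk visits each element of $[2n]$ exactly once, so $\#\mathbf{V}_{\mathcal{P}_n}$ equals the total number of closures. At the typical steps --- when $x_t$ is fresh and the closing target $\sigma_{\mathcal{P}_n}^{-1}(s)$ (with $s$ the starting half-edge of the current cycle) is itself still unpaired --- the conditional closure probability equals $\frac{1}{u_t - 1}$, where $u_t$ is the number of fresh half-edges. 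A careful accounting of all closures --- including both these Bernoulli contributions and the deterministic closures that occur when the walk revisits previously paired half-edges --- yields an expected total of $H_{2n} = \log n + O(1)$, and a Stein--Chen argument for weakly dependent indicators upgrades this first-moment computation to $\mathrm{d_{TV}}(\#\mathbf{V}_{\mathcal{P}_n}, \mathrm{Poisson}(\log n)) \to 0$. The parity constraint is built in for free: Euler's formula gives $\#\mathbf{V}_{\mathcal{P}_n} \equiv n + \#\mathcal{P}_n \equiv \epsilon_n \pmod{2}$, so the Poisson limit is automatically conditioned on the correct parity $\epsilon_n$.

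The main obstacle is controlling the non-typical walk steps: when the current position has already been paired (so that $\alpha_n(x_t)$ is deterministic from an earlier reveal) or when $\sigma_{\mathcal{P}_n}^{-1}(s)$ has itself already been paired, the clean $\frac{1}{u_t - 1}$ estimate fails. The good-configuration hypothesis \eqref{def:good} is tailor-made to neutralize these issues: $\mathsf{L}(\mathcal{P}_n) = o(\sqrt{n})$ eliminates the fixed points of $\sigma_{\mathcal{P}_n}$ (1-gons), which would force immediate loops in the walk, while $\mathsf{B}(\mathcal{P}_n) = o(n)$ controls its 2-cycles (bigons). Showing that the cumulative error from these bad events remains $o(1)$ in total variation, uniformly across the $\sim \log n$ traced cycles and across all good configurations, is the delicate technical point.
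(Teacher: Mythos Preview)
Your sketch contains two substantive gaps.

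\textbf{Parity is not free.} You claim that Stein--Chen yields $\mathrm{d_{TV}}\big(\#\mathbf{V}_{\mathcal{P}_n},\mathrm{Poisson}(\log n)\big)\to 0$ and that the deterministic constraint $\#\mathbf{V}_{\mathcal{P}_n}\equiv\epsilon_n\pmod 2$ then automatically upgrades this to the parity-conditioned Poisson. These two assertions are contradictory: since $\#\mathbf{V}_{\mathcal{P}_n}$ is supported on a single parity class,
\[
\mathrm{d_{TV}}\big(\#\mathbf{V}_{\mathcal{P}_n},\mathrm{Poisson}(\log n)\big)\;\ge\;\mathbb{P}\big(\mathrm{Poisson}(\log n)\not\equiv\epsilon_n\big)\;\longrightarrow\;\tfrac12,
\]
so unconditioned total-variation convergence is simply false. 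The parity constraint is a genuine global dependence among your closure indicators that any Stein--Chen bound must feel; it cannot be installed a posteriori. The paper circumvents this by a different route: it peels with the ``minimal hole'' algorithm $\mathcal{H}$ until a single hole remains (time $\tau_n\approx\#\mathcal{P}_n$), shows (Lemma~\ref{lem:Xtau}) that only a \emph{tight} number $X_{\tau_n}$ of vertices are created before $\tau_n$, and then writes $\#\mathbf{V}_{\mathcal{P}_n}=X_{\tau_n}+\#\mathbf{V}_{\{2(n-\tau_n)\}}$. The parity-conditioned Poisson for the unicellular piece is then imported from Harer--Zagier / Chmutov--Pittel (Lemma~\ref{lem:verticesunicellular}), and the parity of $X_{\tau_n}+n-\tau_n+1$ is read off Euler's formula applied to $S_{\tau_n}$.

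\textbf{Non-fresh steps are not exceptional.} In your walk over $[2n]$, the visits at which $x_t$ is already paired are not a rare nuisance controlled by the goodness hypothesis: there are \emph{exactly $n$} of them, since for every pair $\{e,\alpha(e)\}$ the first one visited is fresh and the second is not. The fresh reveals alone give expected closures $\sum_{i=0}^{n-1}\frac{1}{2(n-i)-1}\sim\tfrac12\log n$, so the deterministic closures at non-fresh steps supply the other $\tfrac12\log n$ --- a leading-order contribution, not an $o(1)$ error. What the bounds $\mathsf{L}(\mathcal{P}_n)=o(\sqrt{n})$ and $\mathsf{B}(\mathcal{P}_n)=o(n)$ actually control (Proposition~\ref{prop:loopbigon}) is the number of peeling steps on holes of perimeter $1$ or $2$, which is a different and much narrower class of exceptional events than ``$x_t$ is already paired.'' Your exploration is essentially the paper's algorithm $\mathcal{R}$, which the paper deploys for Theorem~\ref{thm:weakPD}, not for the vertex count; making it yield Theorem~\ref{thm:vertices} directly would require a careful joint analysis of strong and weak closures together with the parity, which your sketch does not provide.
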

This is supporting our Conjecture \ref{open}: indeed recall from \eqref{eq:countpoisson} that $ \mathrm{d_{TV}}( \# \mathcal{U}_{2n},  \mathrm{Poisson} (\log n)) \to 0$ as $n \to \infty$. Hence, using our Theorem \ref{thm:maps==PDgluing} and duality, we see that $\mathrm{Poisson}^{ \mathrm{odd}}_{\log n}$ is close in total variation to $\# \mathsf{V}( \mathbb{G}_{n}^{ \mathrm{odd}})$ (and similarly in the even case).  In the case when all the perimeters of  $ \mathcal{P}_{n}$ are larger than $3$, the last result is a trivial consequence of \cite{chmutov2016surface} (although the idea of the proof is very different). In essence, the above theorem says that up to parity considerations, the number of vertices of $ \mathbf{M}_{ \mathcal{P}_{n}}$ is asymptotically independent of its number of faces, which is key in proving \eqref{eq:eulerscaling}. 
 
 \paragraph{Connectivity of the edges.} On the way towards Conjecture \ref{open} we describe the connectivity properties of $  \mathbf{G}_{ \mathcal{P}}$, at least for the vast majority of its edges. We start with a definition. Given a (random) graph $ \mathsf{g}_{n}$ with $n$ edges, denote by $ v_{1},v_{2}, \dots$ the vertices of $\mathsf{g}_{n}$ ordered by decreasing degrees. For $i,j \geq 1$, we write $[{i},{j}]_{\mathsf{g}_{n}}$ for the number of edges between $v_{i}$ and $v_{j}$ with the convention that $[{i},{i}]_{\mathsf{g}_{n}}$ is twice the number of self-loops attached to $v_{i}$. We say that a sequence $(\mathsf{g}_{n})_{n \geq 1}$ of random graphs satisfies the Poisson--Dirichlet universality if 
 \begin{eqnarray}  \left(\frac{[i,j]_{\mathsf{g}_{n}}}{2n} : i,j \geq 1\right)& \xrightarrow[n \to +\infty]{(d)} &\left( X_{i}\cdot X_{j} : i,j \geq 1\right),  \qquad  \mbox{(PDU)} \label{eq:weakPD}\\ & \mbox{ where } & (X_{i})_{i \geq 1} \sim \mathsf{PD}(1). \nonumber
\end{eqnarray}
 \begin{remark} The above Poisson--Dirichlet universality convergence can be rephrased in the theory of edge exchangeable random graphs as the convergence towards the rank 1 multigraph driven by the Poisson--Dirichlet partition, see \cite[Example 7.1 and 7.8]{janson2018edge} and \cite{crane2016edge}. \end{remark}
Using the approximate construction of $ \mathbb{G}_{n}$ as a configuration model based on $ \mathcal{U}_{2n}$, it follows from easy concentration arguments that the graphs $ \mathbb{G}_{n}$ satisfy the Poisson--Dirichlet universality \eqref{eq:weakPD}. On the other hand, an intuitive way to formulate \eqref{eq:weakPD} is that the graphs $\mathsf{g}_{n}$ look like configuration models for a large proportion of the edges. We show that this phenomenon actually holds true for more general polygonal gluings:
 \begin{theorem} \label{thm:weakPD}For any good sequence $ (\mathcal{P}_{n})_{n \geq 1}$ of configurations, the graphs $ \mathbf{G}_{ \mathcal{P}_{n}}$ satisfy the Poisson--Dirichlet universality \eqref{eq:weakPD}.
 \end{theorem}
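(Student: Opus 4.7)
The plan is to perform a size-biased peeling exploration of the vertices of $\mathbf{M}_{\mathcal{P}_n}$, iterate it to recover the $\mathsf{PD}(1)$ structure of the large vertices, and then use configuration-model concentration for the edge multiplicities. Concretely, pick a uniform half-edge $s_0 \in \{1,\ldots,2n\}$, which selects a size-biased vertex $V_1$ of degree $D_1$, and explore the orbit of $s_0$ under $\tau = \phi \alpha$, where $\phi$ is the deterministic polygon-rotation permutation of cycle type $\mathcal{P}_n$ and $\alpha$ is the uniform random pairing. The pairing $\alpha$ is revealed lazily: upon visiting an unpaired half-edge, its partner is sampled uniformly among the remaining unpaired ones, and the walk continues via $\phi$. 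The orbit closes after $D_1$ steps.

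The crucial claim is that $D_1/(2n) \xrightarrow{(d)} U_1 \sim \mathrm{Uniform}(0,1)$, in analogy with the classical fact that a uniform element of a uniform random permutation of $\{1, \ldots, 2n\}$ lies in a cycle of uniformly random length. Setting up the exploration as a Markov chain indexed by the number of revealed pairs $r$, each ``free'' reveal closes the cycle with probability exactly $1/(2n - 2r + 1)$. The ``matched'' steps --- where $\tau$ lands on a previously-paired half-edge --- contribute further transitions whose local rate depends on $\phi$, but summed over the exploration they conspire with the free steps to give the same limit as in the uniform-permutation case, provided one controls the contribution of pathological local patterns using goodness: loops contribute $O(\mathsf{L}(\mathcal{P}_n)/\sqrt{n}) = o(1)$ and bigons contribute $O(\mathsf{B}(\mathcal{P}_n)/n) = o(1)$.

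Given $V_1$, the remaining pairing on unvisited half-edges is, conditionally, uniform, and the residual polygon configuration remains good with high probability (only $D_1$ sides have been removed). Restarting the exploration from a uniform unvisited half-edge yields $D_2/(2n - D_1) \to U_2$, independent $\mathrm{Uniform}(0,1)$, and so on. The scaled degrees $(D_i/(2n))$ converge jointly to $\bigl(U_i \prod_{j < i}(1 - U_j)\bigr)_i$, i.e.\ to $\mathrm{GEM}(1)$, whose decreasing reordering is $\mathsf{PD}(1)$. For the multiplicities, after identifying the top $K$ vertices of scaled degrees approaching $X_1, \ldots, X_K$, conditional uniformity of $\alpha$ and a second-moment computation show that the number of $\alpha$-pairs linking $V_i$ and $V_j$ (twice the self-loops for $i = j$) concentrates on $d_i d_j/(2n) \approx 2n X_i X_j$, yielding $[i,j]/(2n) \to X_i X_j$.

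The main obstacle is the convergence $D_1/(2n) \to U_1$. Although the limit matches the uniform-permutation case, the exploration genuinely depends on the deterministic $\phi$, and one must carefully track the interplay between free and matched steps. The goodness assumption has to be used precisely here to suppress contributions from short polygon cycles, which would otherwise distort the universal scaling limit.
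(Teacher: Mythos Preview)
Your exploration via orbits of $\tau=\phi\alpha$ is the same object as the paper's algorithm~$\mathcal{R}$ (peeling the edge immediately to the left of a red boundary vertex), and your target $D_1/(2n)\to U_1$ is exactly the first-coordinate content of Proposition~\ref{prop:closure}. Your remark that goodness must be used to suppress the contribution of short cycles is also on point; the paper makes this precise through Proposition~\ref{prop:loopbigon}, Lemma~\ref{lemma:peuloops} and Lemma~\ref{lem:weakclosing}. So far, what you outline is the paper's strategy, only less detailed.

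The genuine gap is in your passage from degrees to multiplicities. Once you have explored the orbits of $V_1,\dots,V_K$, you have revealed $\alpha$ on every half-edge incident to any of these vertices: the numbers $[i,j]$ for $i,j\leq K$ are then \emph{determined}, and there is no remaining ``conditional uniformity of $\alpha$'' on which to run a second-moment concentration. A configuration-model calculation of the form ``$\#\{\alpha$-pairs between $A_i$ and $A_j\}\approx |A_i||A_j|/(2n)$'' is valid when the sets $A_i$ are fixed independently of $\alpha$; here the sets of half-edges at $V_i$ are cycles of $\phi\alpha$ and hence $\alpha$-measurable, so the argument does not apply as stated. Equivalently, during the exploration of $V_2$ the partners of the peeled edges are uniform among the \emph{remaining boundary} half-edges, but the half-edges at $V_1$ and their $\alpha$-partners have already left the boundary, so this uniformity tells you nothing directly about the $V_1$--$V_2$ multiplicity.

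The paper avoids this by not separating degrees and multiplicities. It fixes $k$ uniform edges of $\mathbf{M}_{\mathcal{P}_n}$ and labels their $2k$ endpoints on $\partial S_0$ \emph{before} the exploration, then tracks for each label $j$ the time $\sigma_n^{(j)}$ at which it is absorbed into the red vertex. The key analytic input (Proposition~\ref{prop:closure}) is the \emph{joint} convergence of the rescaled $(\theta_n^{(i)})$ and $(\sigma_n^{(j)})$ to independent $\beta(1,\tfrac12)$ variables; after the change of variable $x\mapsto\sqrt{1-x}$ this turns the closure times into a $\mathrm{GEM}(1)$ stick-breaking and the $\sigma$'s into i.i.d.\ uniforms. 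Since label $j$ ends up on the vertex $v_n^{(i)}$ for which $\theta_n^{(i-1)}\leq\sigma_n^{(j)}<\theta_n^{(i)}$, the subgraph $\mathtt{G}_{k,n}$ induced by the $k$ sampled edges converges in law to $k$ i.i.d.\ edges thrown onto the $\mathsf{PD}(1)$ partition, and a binomial concentration in $k$ then yields $[i,j]/(2n)\to X_iX_j$. Making this work requires Lemma~\ref{lem:weakclosing}, which shows that the tracked labels are with high probability absorbed at strong closure times and not lost to weak or loop closures, and do not coalesce beforehand---this is exactly the piece missing from your ``conspire'' step and is where goodness is used in earnest.
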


Let us draw a few consequences of the last theorem. The total number of loops (each loop is counted twice) in the graph $ \mathbf{G}_{ \mathcal{P}_{n}}$ can be written as $\sum_{i} [i,i]_{\mathbf{G}_{ \mathcal{P}_{n}}}$ and hence satisfies
$$ \frac{1}{2n} \# \mathrm{Loops}( \mathbf{G}_{ \mathcal{P}_{n}})  \xrightarrow[n\to\infty]{(d)} \sum_{i \geq 1} X_{i}^{2}.$$ The last convergence (without  identification of the limit law) was recently established in \cite{BCDH18} in the case of uniform maps using the method of moments
%They also proved that $ \mathcal{L}$ has a density, which is straightforward from our definition.
. They also studied the degree and the number of edges incident to the root vertex (without counting loops twice) in a random map. Since the root vertex is a degree-biased vertex, if we introduce a random index $I \geq 0$ chosen proportionally to the degree of $v_{I}$ in $ \mathbf{G}_{ \mathcal{P}_{n}}$, then these variables can respectively be written as $$ \sum_{j \geq 0}[I,j]_{  \mathbf{G}_{ \mathcal{P}_{n}}}  \quad  \mbox{ and } \quad  \sum_{j \geq 0}[I,j]_{\mathbf{G}_{ \mathcal{P}_{n}}} - \frac{1}{2} [I,I]_{\mathbf{G}_{ \mathcal{P}_{n}}},$$ and thus converge once rescaled by $1/(2n)$  towards $\overline{X}$ and $\overline{X}-  \frac{1}{2}\overline{X}^{2}$ where $ \overline{X}$ is a size-bias pick   in a Poisson--Dirichlet partition, for which it is well known that $ \overline{X}= \mathrm{Unif}([0,1])$ in distribution. This extends the results of \cite{BCDH18} to a much broader class of random maps.

The above result shows that the distribution of
large degrees is universal among random maps obtained by gluing of good configurations. Actually, the distribution of small degrees, namely the fact that they converge in law towards independent Poisson random variables of means $1, 1/2, 1/3,...$ should also be universal among this class of random maps (and this would indeed be implied by our Conjecture \ref{open}). An approach using the method of moments might be possible but would not fit the general scope of this paper and so we leave this problem for future works.

Finally, let us mention that our proof of Theorem \ref{thm:weakPD} is robust and also allows to obtain results about the location of small faces. For example, if $\mathcal{P}_n$ has a positive proportion of triangles, then the proportion of these triangles whose three vertices are $v_i$, $v_j$ and $v_k$ (in this order) is asymptotically $X_i X_j X_k$ (see Remark \ref{rem:hypergraph}).

\subsection{Organization and techniques} The paper is organized as follows. 

Section \ref{sec:uniformmaps} is devoted to the study of uniform maps with a fixed number of edges and no restriction on the genus. Using the classical \emph{coding of maps by permutations}, we prove that random maps can approximately be seen as a gluing of polygons whose perimeters follow the cycle length of a uniform permutation. By duality, this enables us to see the graph structure of a uniform map as a \emph{configuration model} which is key in our proof of Corollary \ref{cor:diameter}.

We then focus on the more general model $ \mathbf{M}_{ \mathcal{P}}$. We prove Theorem \ref{thm:vertices} and Theorem \ref{thm:weakPD} using ``dynamical'' explorations of the random surfaces $ \mathbf{M}_{ \mathcal{P}}$. Although most of the ideas presented here are already underlying papers in the field (see e.g.~\cite{BM04,pippenger2006topological}), we draw a direct link with the circles of ideas used in the theory of random planar maps and in particular with the peeling process, see \cite{curienpeeling}. We then use two specific algorithms to explore the surface $ \mathbf{M}_{ \mathcal{P}}$ either by peeling the minimal hole or by discovering the vertices one by one, which yield Theorem \ref{thm:vertices} and Theorem \ref{thm:weakPD} respectively.
 
 \medskip\noindent  \textbf{Acknowledgments:} We thank ERC Grant 740943 ``GeoBrown'' and Grant ANR-14-CE25-0014 ``ANR GRAAL'' for support. We are grateful to Julien Courtiel for discussions about \cite{BCDH18}. 
\tableofcontents

\section{Uniform random maps} \label{sec:uniformmaps}

Most of what follows in this section is probably known to many specialists in the field, but we were not able to find precise references and thus took the opportunity to fill a gap in the literature.

\subsection{Uniform maps as a configuration model}
Let $ {\mathfrak{m}}$ be a (connected) labeled map with $n$ edges. The combinatorics of the map is then encoded by two permutations $ \alpha, \phi \in \mathfrak{S}_{2n}$: the permutation $\alpha$ is an involution without fixed points coming from the pairing of the oriented  edges into edges of the map and $\phi$ is the permutation whose cycles are the oriented edges arranged clockwise around each face of the map, see e.g.~\cite[Chapter 1.3.3]{LZ04}. \medskip 

We denote by $ \mathcal{I}_{2n} \subset \mathfrak{S}_{2n}$ the subset of involutions without fixed points (i.e.~product of $n$ non overlapping transpositions). Clearly, we have $\# \mathcal{I}_{2n} = (2n-1)!!$.
Remark that a pair $(\alpha, \phi) \in \mathcal{I}_{2n}\times \mathfrak{S}_{2n}$ is not necessarily associated to a (connected labeled) map: for $n=2$, the "map" associated with the {permutations $\alpha =  (12)(34)$  and $\phi = (1)(2)(3)(4)$ consists} of two disjoint loops. However, this situation is marginal:

\begin{proposition}[Uniform maps are almost uniform permutations]\label{prop:count_connected_maps}
\noindent Let
$ \mathcal{C}_{2n}=\{( \alpha, \phi) \in \mathcal{I}_{2n} \times
\mathfrak{S}_{2n} :  (\alpha, \phi) \mbox{ encodes a  \textbf{connected} labeled map}\}$, so that $ \mathcal{C}_{2n}$ is in bijection with labeled maps with $n$ edges and the number of rooted maps with $n$ edges is $\frac{1}{(2n-1)!}\# \mathcal{C}_{2n}$. Then we have the asymptotic expansion
$$ \frac{\# \mathcal{C}_{2n}}{(2n)! (2n-1)!!} = 1 - \frac{1}{2n} +
O(1/n^{2}).$$
In particular, if $ (A_{n}, F_{n}) \in \mathcal{I}_{2n} \times
\mathfrak{S}_{2n}$ is the pair of permutations associated with a uniform
labeled map with $n$ edges and if $( \alpha_{n}, \phi_{n})$ is uniformly
distributed over $\mathcal{I}_{2n} \times  \mathfrak{S}_{2n}$, then
$$  \mathrm{d_{TV}}\big((A_{n},F_{n}) ; ( \alpha_{n}, \phi_{n}) \big)
\underset{n \to \infty}{\sim} \frac{1}{2n}.$$
\end{proposition}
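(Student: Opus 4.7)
The plan is to identify connectedness of the map encoded by $(\alpha,\phi)$ with a transitivity condition and then use a first/second moment argument on ``stable subsets'' to compute $\#\mathcal{C}_{2n}$ asymptotically. Concretely, the pair $(\alpha,\phi)$ encodes a disconnected map if and only if there exists a proper nonempty subset $S \subset \{1,\dots,2n\}$ which is stable under both $\alpha$ and $\phi$; since $\alpha$ has no fixed point, $|S|$ is necessarily even, say $|S|=2k$ with $1 \le k \le n-1$. The target $\frac{1}{2n}$ will come entirely from $k=1$ and $k=n-1$.

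First I would compute, for a uniform $(\alpha_n,\phi_n) \in \mathcal{I}_{2n}\times\mathfrak{S}_{2n}$, the expected number $E_k$ of stable subsets of size $2k$. Independence of $\alpha_n$ and $\phi_n$ and a straightforward count give
$$E_k \;=\; \binom{2n}{2k}\cdot\frac{(2k-1)!!(2n-2k-1)!!}{(2n-1)!!}\cdot\frac{(2k)!(2n-2k)!}{(2n)!} \;=\; \frac{\binom{n}{k}}{\binom{2n}{2k}}.$$
This is symmetric ($E_k = E_{n-k}$), satisfies $E_1 = \frac{1}{2n-1}$, and a one-line ratio test $E_k/E_{k-1} = (2k-1)/(2n-2k+1)$ shows $E_k$ decays rapidly enough that $\sum_{k=2}^{n-2}E_k = O(1/n^2)$, whence $\sum_{k=1}^{n-1} E_k = \frac{2}{2n-1} + O(1/n^2)$.

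For the upper bound on the disconnection probability, I would use that a disconnected pair contributes at least $2$ to the count of stable subsets (both $S$ and its complement), giving $\Pr[\mathrm{disc}] \le \tfrac12 \sum_k E_k = \frac{1}{2n-1} + O(1/n^2)$. For the matching lower bound, I would apply Bonferroni at order $2$ to the events $A_S = \{S \text{ stable}\}$ indexed by $|S|=2$: overlapping pairs with $|S\cap T|=1$ contribute zero (they would force $\alpha$ to take two distinct values at the same point), while disjoint pairs contribute a total of $O(n^4 \cdot n^{-6}) = O(1/n^2)$. This yields $\Pr[\mathrm{disc}] \ge \Pr[\exists\, |S|=2 \text{ stable}] \ge E_1 - O(1/n^2) = \frac{1}{2n-1} + O(1/n^2)$.

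Combining the two bounds gives $\#\mathcal{C}_{2n}/((2n-1)!!\,(2n)!) = 1 - \frac{1}{2n} + O(1/n^2)$. The total-variation statement is then a one-liner: since the law of $(A_n,F_n)$ is supported on $\mathcal{C}_{2n}$ and uniform there while $(\alpha_n,\phi_n)$ is uniform on the larger set $\mathcal{I}_{2n}\times \mathfrak{S}_{2n}$, one simply has $\mathrm{d_{TV}}\big((A_n,F_n);(\alpha_n,\phi_n)\big) = 1 - \#\mathcal{C}_{2n}/((2n-1)!!\,(2n)!) \sim \frac{1}{2n}$. The only part requiring genuine care is the lower bound on $\Pr[\mathrm{disc}]$, where one must check that pairs of small stable subsets do not inflate the count to order $1/n$; the combinatorial structure of $\mathcal{I}_{2n}$ (two-element stable subsets are precisely edges of $\alpha$) makes this easy, so the whole argument should remain at the level of a page of elementary estimates.
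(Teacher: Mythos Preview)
Your argument is correct and takes a genuinely different route from the paper. The paper writes down the exact recursion
\[
\#\mathcal{C}_{2n} \;=\; (2n-1)!!\,(2n)! \;-\; \sum_{\ell=0}^{n-2}\binom{2n-1}{2\ell+1}\#\mathcal{C}_{2(\ell+1)}\,(2n-2\ell-3)!!\,(2n-2\ell-2)!,
\]
obtained by decomposing according to the \emph{smallest} stable subset containing the point~$1$, then iterates it into an alternating sum over compositions of~$n$ and extracts the two dominant terms. You instead compute the expected number $E_k=\binom{n}{k}/\binom{2n}{2k}$ of stable subsets of each size directly and run a first/second moment (Bonferroni) bound on the two-element ones. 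Your computation of $E_k$, the ratio $E_k/E_{k-1}=(2k-1)/(2n-2k+1)$, and the inclusion--exclusion on $|S|=2$ are all correct; the bound $\sum_{2\le k\le n-2}E_k=O(1/n^2)$ follows e.g.\ from $E_k\le 1/\binom{n}{k}$ (since $\binom{2n}{2k}\ge\binom{n}{k}^2$) together with the elementary fact $\sum_{k=2}^{n-2}1/\binom{n}{k}=O(1/n^2)$.

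What each approach buys: your moment argument is shorter and more probabilistic, and it makes the source of the $\tfrac{1}{2n}$ term completely explicit (isolated edges, i.e.\ stable $2$-sets). The paper's recursion, on the other hand, produces an \emph{exact} expression for $\#\mathcal{C}_{2n}$ as an alternating sum, from which one could in principle read off further terms of the asymptotic expansion without redoing any combinatorics. For the statement as written, your proof is entirely adequate.
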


\noindent \textbf{Proof.} Let $(\alpha_{n}, \phi_{n}) \in \mathcal{I}_{2n}
\times \mathfrak{S}_{2n}$ be uniformly distributed. If $(\alpha_n, \phi_{n})$ does not yield a connected map, that means that the subgroup generated by $\alpha_{n}$ and $\phi_{n}$ does not act transitively on $\{1,2,\dots , 2n\}$, or equivalently that $\{1,2, \dots , 2n\}$ can be partitioned into two non-empty subsets $I$ and $J$ such that both $I$ and $J$ are stable by $\alpha_{n}$ and $\phi_{n}$. By partitioning according to the smallest stable subset containing $1$, we obtain the following recursive relation\footnote{We also note that a very similar (but different) formula appears in \cite{AB00}.}:
$$ \# \mathcal{C}_{2n} =(2n-1)!!(2n)! - \sum_{\ell=0}^{ n-2} {2n-1 \choose 2\ell+1} \#  \mathcal{C}_{2(\ell+1)} (2n-2\ell-3)!!(2n -2 \ell -2)!.$$
Writing $c_{n} = \# \mathcal{C}_{2n}/(2n-1)!$ for the number of rooted maps with $n$ edges and $\xi_{n} = (2n-1)!!$, the above recursive equation is equivalent to $ c_{n} = 2n \times \xi_{n}- \sum_{\ell=1}^{n-1} c_{\ell} \xi_{n-\ell}$. Iterating this  yields
$$ c_{n} = \sum_{i \geq 1} (-1)^{i-1} \sum_{\begin{subarray}{c} k_{1}+ k_{2}+
\dots + k_{i} = n\\ k_{i} \geq 1 \end{subarray}} 2 k_{1} \cdot \xi_{k_{1}}
\xi_{k_{2}}\dots \xi_{k_{i}}.$$
The terms $2 n \xi_{n}$ and $-2(n-1)\xi_{n-1}\xi_{1}$ give
$(2n-1)!!(2n-1+O(1/n))$ while the total sum of the absolute values of the
other terms is easily seen to be of order $O(1/n) \times(2n-1)!!$. This
proves the first claim of the theorem. The second one is a trivial consequence of the definition of total variation distance since $(A_{n}, F_{n})$ is uniformly distributed over $\mathcal{C}_{2n}$. \qed\bigskip

\proof[Proof of Theorem \ref{thm:maps==PDgluing}] Theorem \ref{thm:maps==PDgluing} is an easy consequence of the last proposition. Indeed, if we first sample the polygon's perimeter $ \mathcal{U}_{2n}$ and then assign in a uniform way labels $\{1, 2, \dots , 2n\}$ to the edges of the polygons, this represents a uniform permutation $\phi_{n} \in \mathfrak{S}_{2n} $ whose cycles are the labeled polygons. The independent involution $\alpha_{n}$ without fixed points then plays the role of the gluing operation which identifies the edges of the polygons 2 by 2. Provided the resulting surface is connected, the random labeled map $  {\mathbf{M}}_{ \mathcal{U}_{2n}}$  it creates is plainly associated to the pair of permutation $(\alpha_{n}, \phi_{n}) \in \mathcal{I}_{2n} \times \mathfrak{S}_{2n}$.  \qed  \bigskip

As we alluded to in the introduction, we will use Theorem \ref{thm:maps==PDgluing} in conjunction with the \emph{self-duality property} of $  \mathbb{M}_{n}$ to deduce an approximate construction of its graph structure. Indeed, recall that up to an error of $O(1/n)$ in total variation distance, we have
 \begin{eqnarray} \label{eq:constructionmaps} \mathbb{G}_{n}= \mathsf{Graph}( \mathbb{M}_{n} ) \underset{ \mathrm{self\ duality}}{\overset{(d)}{=}} \mathsf{Graph}( \mathbb{M}_{n}^{\dagger} ) \underset{ \mathrm{Thm.\ } \ref{thm:maps==PDgluing}}{\overset{ \mathrm{d_{TV}}}{\approx}}\mathsf{Graph}( \mathbf{M}_{ \mathcal{U}_{2n}}^{\dagger} ) \ \ {\overset{(d)}{=}} \ \   \mathsf{ConfigModel}( \mathcal{U}_{2n}), \end{eqnarray}
 where $ \mathsf{ConfigModel}(\{d_{1}, d_{2}, \dots , d_{k}\})$ is the random graph obtained by starting with $k$ vertices having $d_{1}, \dots , d_{k}$ half-legs and labeling/pairing those $2$ by $2$ in a uniform manner, see \cite{bollobas1980probabilistic}.

\subsection{Euler's relation in the limit}
We prove \eqref{eq:eulerscaling} which we recall below.
\begin{proposition}[Euler's relation in the limit] \label{prop:euler} If $\# \mathsf{F}(   \mathbb{M}_{n}), \# \mathsf{V}(  {\mathbb{M}}_{n})$ and $ \mathsf{Genus}( {\mathbb{M}}_{n})$ are respectively the number of faces, vertices and the genus of $\mathbb{M}_{n}$ then we have 
  \begin{eqnarray*} \left( \frac{\# \mathsf{V}( \mathbb{M}_{n}) -\log n}{ \sqrt{\log n}} , \frac{\# \mathsf{F}( \mathbb{M}_{n}) -\log n}{ \sqrt{\log n}}, \frac{\mathsf{Genus}( \mathbb{M}_{n}) - \frac{n}{2} + \log n}{ \sqrt{\log n}} \right) \xrightarrow[n\to\infty]{(d)} \left( \mathcal{N}_{1}, \mathcal{N}_{2}, - \frac{\mathcal{N}_{1} + \mathcal{N}_{2}}{2}\right),   \end{eqnarray*}
where $ \mathcal{N}_{1}$ and $ \mathcal{N}_{2}$ are independent standard Gaussian random variables. \end{proposition}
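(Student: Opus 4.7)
The plan is to combine Theorem~\ref{thm:maps==PDgluing}, Theorem~\ref{thm:vertices} and Euler's relation. By Theorem~\ref{thm:maps==PDgluing}, at a total variation cost of $O(1/n)$ we may replace $\mathbb{M}_n$ by $\mathbf{M}_{\mathcal{U}_{2n}}$, whose faces are precisely the polygons of the initial configuration; consequently $\#\mathsf{F}(\mathbb{M}_n)$ equals in law, up to $O(1/n)$ in total variation, the number of cycles of a uniform permutation in $\mathfrak{S}_{2n}$. By the classical Feller coupling this is distributed as $\sum_{i=1}^{2n}B_i$ with independent $B_i\sim\mathrm{Ber}(1/i)$, whose mean and variance are both $\log n+O(1)$, so Lindeberg's CLT yields $(\#\mathsf{F}(\mathbb{M}_n)-\log n)/\sqrt{\log n}\to\mathcal{N}_2$. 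The marginal convergence of $\#\mathsf{V}(\mathbb{M}_n)$ then follows by self-duality.

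The heart of the proof is the asymptotic independence of $\#\mathsf{V}$ and $\#\mathsf{F}$. I would obtain it by conditioning on the random face structure $\mathcal{U}_{2n}$. One first checks that $\mathcal{U}_{2n}$ is almost surely a good sequence: its numbers of $1$-cycles and $2$-cycles converge in law to independent Poisson variables (of parameters $1$ and $1/2$), so $\mathsf{L}(\mathcal{U}_{2n})/\sqrt{n}$ and $\mathsf{B}(\mathcal{U}_{2n})/n$ tend to $0$ in probability. Applying Theorem~\ref{thm:vertices} along this random good sequence, the conditional law of $\#\mathsf{V}(\mathbf{M}_{\mathcal{U}_{2n}})$ given $\mathcal{U}_{2n}$ is close in total variation to $\mathrm{Poisson}^{\epsilon_n}_{\log n}$. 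Since this Poisson law depends on $\mathcal{U}_{2n}$ only through the asymptotically harmless parity $\epsilon_n$, a Poisson CLT gives
\[ \mathbb{E}\!\left[f\!\left(\tfrac{\#\mathsf{V}-\log n}{\sqrt{\log n}}\right)\,\Big|\,\mathcal{U}_{2n}\right] \xrightarrow[n\to\infty]{} \mathbb{E}[f(\mathcal{N}_1)] \]
in probability for every bounded continuous $f$. Combined with the CLT for $\#\mathsf{F}$ from the previous paragraph and dominated convergence, this establishes the joint convergence of the first two coordinates, with asymptotic independence.

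For the genus, Euler's relation $\#\mathsf{V}(\mathbb{M}_n)-n+\#\mathsf{F}(\mathbb{M}_n)=2-2\,\mathsf{Genus}(\mathbb{M}_n)$ rewrites as
\[ \frac{\mathsf{Genus}(\mathbb{M}_n)-n/2+\log n}{\sqrt{\log n}} \;=\; \frac{1}{\sqrt{\log n}} - \frac{1}{2}\!\left(\frac{\#\mathsf{V}(\mathbb{M}_n)-\log n}{\sqrt{\log n}} + \frac{\#\mathsf{F}(\mathbb{M}_n)-\log n}{\sqrt{\log n}}\right), \]
and the continuous mapping theorem delivers the announced limit $-(\mathcal{N}_1+\mathcal{N}_2)/2$. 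The main obstacle lies in the conditional application of Theorem~\ref{thm:vertices}, which is stated only for deterministic good sequences. One must either extract from its proof a quantitative total variation bound depending explicitly on $\mathsf{L}(\mathcal{P})/\sqrt n$ and $\mathsf{B}(\mathcal{P})/n$, or run the whole argument almost surely along the random good sequence $\mathcal{U}_{2n}$; the very strong concentration of its short cycles ensures that either route goes through without difficulty.
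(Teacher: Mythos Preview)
Your proposal is correct and follows essentially the same route as the paper: marginal CLTs for $\#\mathsf{F}$ via Theorem~\ref{thm:maps==PDgluing} and the cycle count of a uniform permutation, for $\#\mathsf{V}$ by self-duality, asymptotic independence by conditioning on $\mathcal{U}_{2n}$ and applying Theorem~\ref{thm:vertices}, and the genus via Euler's relation. The paper even flags the same ``back to the future'' reliance on Theorem~\ref{thm:vertices}, and handles the random-versus-deterministic issue you raise in exactly the way you suggest, by restricting to the high-probability event $\{\mathsf{L}(\mathcal{U}_{2n})\le n^{1/2-\varepsilon},\ \mathsf{B}(\mathcal{U}_{2n})\le n^{1-\varepsilon}\}$ and invoking Theorem~\ref{thm:vertices} conditionally; you are if anything more explicit than the paper about the uniformity that this step requires.
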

\proof We will rely on Theorem \ref{thm:vertices} which is proved later in the paper. We beg the reader's pardon for this inelegant ``back to the future'' construction.  Let us first focus on the second component. By Theorem \ref{thm:maps==PDgluing} together with  \eqref{eq:countpoisson} we deduce that 
$$\# \mathsf{F}( \mathbb{M}_{n})  \overset{ \mathrm{d_{TV}}}{\approx} \# \mathcal{U}_{2n} \overset{ \mathrm{d_{TV}}}{\approx} \mathrm{Poisson} ( \log 2n) \overset{ \mathrm{d_{TV}}}{\approx} \mathrm{Poisson}( \log n),$$ since for any $c>0$ we have $  \mathrm{d_{TV}}(\mathrm{Poisson} (\lambda +c) , \mathrm{Poisson}(\lambda)) \to 0$ as $\lambda \to \infty$. It is then standard that $  \lambda^{{-1/2}}(\mathrm{Poisson}(\lambda)-\lambda)$ converges in law towards a standard Gaussian as $\lambda \to \infty$. This proves the convergence of the second coordinate, the first one being obtained by self-duality. It remains to show that the (rescaled) number of faces and vertices are asymptotically independent. Once this is done, the convergence of the third component follows from Euler's relation: $$ \# \mathsf{V}( {\mathbb{M}_{n}}) +\# \mathsf{F}( { \mathbb{M}_{n}})-n = 2(1- \mathsf{Genus}( { \mathbb{M}_{n}})).$$
To prove the asymptotic independence, we rely on Theorem \ref{thm:vertices}.  Indeed, if $( \mathcal{U}_{2n})_{n \geq 1}$ is the cycle structure of a uniform random permutation of $ \mathfrak{S}_{2n}$, then as recalled in the introduction, the numbers of loops and bigons in $ \mathcal{U}_{2n}$ satisfy
$$ ( \mathsf{L}( \mathcal{U}_{2n}), \mathsf{B}( \mathcal{U}_{2n})) \xrightarrow[n\to\infty]{(d)} \left( \mathrm{Poisson}(1), \mathrm{Poisson}\left(\frac{1}{2}\right)\right),$$ with independent Poisson variables. In particular, for any $ \varepsilon>0$, the probability that either $ \mathsf{B}( \mathcal{U}_{2n}) \geq n^{ 1 - \varepsilon}$ or $\mathsf{L}( \mathcal{U}_{2n}) \geq n^{\frac{1}{2}- \varepsilon}$ tends to $0$ as $ n \to \infty$. We can then apply Theorem \ref{thm:vertices} (using its notation) to deduce that 
$$ \mathbb{E}\left[ \mathrm{d_{TV}} \left( \# \mathsf{V}( \mathbf{M}_{ \mathcal{U}_{2n}}), \mathrm{Poisson}^{ \epsilon_{n}}_{\log n} \right) ~\Big | ~ \mathcal{U}_{2n},\mathsf{B}( \mathcal{U}_{2n}) \leq n^{ 1- \varepsilon}, \mathsf{L}( \mathcal{U}_{2n}) \leq n^{\frac{1}{2}- \varepsilon}  \right] \xrightarrow[n \to \infty]{\mathbb{P}} 0,$$
where $ \epsilon_{n}$ is the parity of $n + \# \mathcal{U}_{2n}$. But since $\lambda^{{-1/2}}(\mathrm{Poisson}^{\epsilon}_{\lambda}-\lambda) \to \mathcal{N}$ regardless of the parity $ \epsilon$ we indeed deduce that the rescaled number of faces and vertices in $ \mathbf{M}_{ \mathcal{U}_{2n}}$ are asymptotically independent. The same is true in $ \mathbb{M}_{n}$ by Theorem \ref{thm:maps==PDgluing}. \endproof

\subsection{The diameter is $2$ or $3$}
In this section we prove Corollary \ref{cor:diameter}, showing that the diameter of a random map with $n$ edges converges in law towards a random variable whose support is $\{2,3\}$. Before going into the proof, let us sketch the main idea (a quick glance at Figure \ref{fig:graphes} may help to get convinced). We first prove that with high probability all vertices have a neighbor among the vertices of degree comparable to $n$. Since all these vertices are linked to each other, this proves that the diameter of the map is at most $3$. The diameter of the map can even be equal to $2$ if all pairs of vertices of low degree share a neighbor among the vertices of high degree.
\medskip 

Let us recall a simple calculation that we will use several times below. If $  \mathcal{U}_{2n}$ is the cycle structure of a uniform random permutation of $ \mathfrak{S}_{2n}$, then for any positive function $\psi : \{1,2, \dots \} \to \mathbb{R}_{+}$ we have 
 \begin{eqnarray} \label{eq:sizebias} \mathbb{E}\left[\sum_{d \in \mathcal{U}_{2n}} \psi(d)\right] =  2n \cdot  \mathbb{E}\left[\frac{1}{2n} \sum_{d \in \mathcal{U}_{2n}} d \cdot \frac{\psi(d)}{d} \right] = 2n \cdot  \frac{1}{2n}\sum_{i=1}^{2n} \frac{\psi(i)}{i}= \sum_{i=1}^{2n} \frac{\psi(i)}{i},  \end{eqnarray}
 simply because the law of the length of a typical cycle in a uniform permutation of $ \mathfrak{S}_{2n}$ (i.e.~a cycle sampled proportionally to its length or equivalently the cycle containing a given point) is uniformly distributed over $\{1,2, \dots , 2n\}$.

\proof[Proof of Corollary \ref{cor:diameter}] According to \eqref{eq:constructionmaps}, it suffices to prove the result for the random graph obtained from a configuration model whose vertices have degrees prescribed by $ \mathcal{U}_{2n}$, which we denote below by $ \mathsf{ConfigModel} (\mathcal{U}_{2n})$. We write $C_{1}^{(n)}, \dots, C_{2n}^{(n)}$ for the number of occurrences of  $1, 2, \dots, 2n$ in $ \mathcal{U}_{2n}$, i.e.~the number of vertices of degree $1,2, \dots, 2n$ in $ \mathsf{ConfigModel}( \mathcal{U}_{2n})$. For $ \delta >0$ we denote by $ \mathrm{V}^{(n)}_{\delta n}$ the set of vertices of $\mathsf{ConfigModel}( \mathcal{U}_{2n})$ whose degree is larger than or equal to $\delta n$. We first state two lemmas:
\begin{lemma} \label{lem:core} For any $ \varepsilon>0$, we can find $\delta>0$ and $A>0$ such that for all $n$ sufficiently large, in the random graph $\mathsf{ConfigModel}( \mathcal{U}_{2n})$, we have 
 \begin{eqnarray} \label{eq:connectedgrand} \mathbb{P}\left( \mbox{all vertices have a neighbor in }  \mathrm{V}^{(n)}_{ \delta n}\right) \geq 1- \varepsilon,  \\
 \mathbb{P}\left(\mbox{all vertices with degree  } \geq A \mbox{ are connected to all the vertices of } \mathrm{V}^{(n)}_{ \delta n}\right) \geq 1- \varepsilon. \label{eq:pairgrand}  \end{eqnarray}
\end{lemma}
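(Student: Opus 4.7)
The strategy is a first-moment argument combined with the Poisson--Dirichlet structure of the degree sequence $\mathcal{U}_{2n}$ and the elementary combinatorics of the configuration-model matching. Throughout I fix a small parameter $\eta>0$ to be tuned later and write $D := \sum_{v \in \mathrm{V}^{(n)}_{\delta n}} d_v$ for the total degree carried by the large vertices. I first choose $\delta$: by \eqref{eq:cvPD} and the fact that $\mathsf{PD}(1)$ is a partition of $1$, so that its tail $\sum_{X_i < \delta} X_i$ vanishes almost surely as $\delta \to 0$, one can pick $\delta = \delta(\eta,\varepsilon)$ so small that, with probability at least $1-\varepsilon/2$ for $n$ large, $D \geq (1-\eta)\cdot 2n$. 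Call this event $\mathcal{E}$ and work conditionally on $\mathcal{U}_{2n}$ inside $\mathcal{E}$.

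For \eqref{eq:connectedgrand}, the main input is the bound
$$\mathbb{P}\bigl(v \text{ has no neighbor in } \mathrm{V}^{(n)}_{\delta n} \,\bigm|\, \mathcal{U}_{2n}\bigr) \leq \left(1 - \frac{D}{2n}\right)^{d_v} \leq \eta^{d_v}$$
on $\mathcal{E}$, for $v \notin \mathrm{V}^{(n)}_{\delta n}$. This is obtained by pairing the $d_v$ half-edges of $v$ sequentially; at step $i$ the conditional probability that the next half-edge of $v$ avoids the $D$ half-edges of $\mathrm{V}^{(n)}_{\delta n}$ equals $1 - D/(2n-1-2i) \leq 1 - D/(2n) \leq \eta$. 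For the at most $2/\delta$ vertices lying inside $\mathrm{V}^{(n)}_{\delta n}$, an analogous estimate gives a negligible bound of type $(\eta + O(1/n))^{\delta n}$. Combining the main bound with the size-bias identity \eqref{eq:sizebias}, the expected number of ``bad'' vertices on $\mathcal{E}$ is at most
$$\sum_{d=1}^{2n} \frac{\eta^d}{d} + o(1) \leq -\log(1-\eta) + o(1),$$
and Markov's inequality together with $\mathbb{P}(\mathcal{E}^c) \leq \varepsilon/2$ yields \eqref{eq:connectedgrand} upon choosing $\eta$ with $-\log(1-\eta) \leq \varepsilon/4$.

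For \eqref{eq:pairgrand} the same pairing computation gives
$$\mathbb{P}(u \not\sim w \mid \mathcal{U}_{2n}) \leq \left(1 - \frac{d_w}{2n}\right)^{d_u} \leq (1 - \delta/2)^{d_u}$$
for any $u \neq w$ with $w \in \mathrm{V}^{(n)}_{\delta n}$. Since $|\mathrm{V}^{(n)}_{\delta n}| \leq 2/\delta$ deterministically and \eqref{eq:sizebias} yields $\mathbb{E}\bigl[\sum_{d_u \geq A}(1-\delta/2)^{d_u}\bigr] \leq 2(1-\delta/2)^A/(A\delta)$, a union bound over pairs shows that the expected number of bad pairs is of order $(1-\delta/2)^A/(\delta^2 A)$; taking $A$ large enough (depending on $\delta,\varepsilon$) and applying Markov's inequality concludes.

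The only technical point worth care is the matching-probability bound, which follows from a direct monotonicity check on each nested factor when the half-edges of a given vertex are paired off sequentially. Beyond this bookkeeping I foresee no substantive obstacle: the argument is a clean first-moment calculation made possible by the Poisson--Dirichlet shape of $\mathcal{U}_{2n}$ and the explicit size-bias identity \eqref{eq:sizebias}.
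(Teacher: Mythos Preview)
Your proposal is correct and follows essentially the same route as the paper: choose $\delta$ via the Poisson--Dirichlet limit so that the high-degree vertices carry all but an $\eta$-fraction of the total degree with high probability, bound the avoidance probability in the configuration model by $\eta^{d}$ (resp.\ $(1-\delta/2)^{d}$) via sequential pairing, and control the expected number of bad vertices using the size-bias identity \eqref{eq:sizebias}. The paper's argument is organized slightly differently (it writes $D=2n(1-E(\delta,n))$ and reuses the same $\varepsilon$ where you introduce a separate $\eta$), but the ingredients and structure are identical.
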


The next lemma is a decoupling result between the small degrees and the large degrees in $ \mathcal{U}_{2n}$:\begin{lemma} \label{lem:decoupling} If $C_{1}^{(n)}, C_{2}^{(n)}, \dots$ denote the number of occurences of $1,2,\dots$ in $  \mathcal{U}_{2n}$ and if $M_{1}^{(n)}>M_{2}^{(n)}> \dots$  are the degrees of $ \mathcal{U}_{2n}$ ranked in decreasing order, then we have the following convergence in distribution in the sense of finite-dimensional marginals:
 \begin{eqnarray*} \left( \left(C_{i}^{(n)}\right)_{i \geq 1} , \left(\frac{M_{i}^{(n)}}{2n}\right)_{i \geq 1}\right) & \xrightarrow[n\to\infty]{(d)} & ((P_{i})_{i \geq 1} , \mathsf{PD}(1)),  \end{eqnarray*}
 where $P_{1},P_{2}, \dots$ are independent Poisson random variables of mean $ 1, \frac{1}{2}, \frac{1}{3}, \dots$ and $ \mathsf{PD}(1)$ is an independent Poisson--Dirichlet partition. 
 \end{lemma}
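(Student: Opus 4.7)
\bigskip
\noindent
\textbf{Proof plan.} The strategy is to combine two classical marginal results into a joint convergence with asymptotic independence: the Poisson convergence of short cycle counts of a uniform permutation to independent Poissons of means $1/i$ (Goncharov, see \cite{arratia2000limits}) and the Poisson--Dirichlet convergence of rescaled ranked cycle lengths, recalled in \eqref{eq:cvPD}. The natural tool is the Feller coupling: on a common probability space, I would introduce independent Bernoullis $\xi_{i} \sim \mathrm{Ber}(1/i)$ for $i \geq 1$ and realize the cycle structure of a uniform $\sigma_{2n} \in \mathfrak{S}_{2n}$ as the multiset of spacings $\ell_{j+1} - \ell_{j}$ between consecutive $1$'s in $(\xi_{1}, \ldots, \xi_{2n}, 1)$. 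In this representation, short cycles correspond to short spacings, which are concentrated in the ``early'' part of the sequence (where the $\xi_{i}$ are typically $1$), while large cycles correspond to long spacings, which can only appear in the ``tail'' (where the $\xi_{i}$ are typically $0$).

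Fix $k \geq 1$ and an accuracy parameter $\varepsilon > 0$. The first step is to show that for a sufficiently large threshold $K = K(\varepsilon, k)$, with probability at least $1 - \varepsilon$ uniformly in $n$, every cycle of length $\leq k$ arises from a short spacing located entirely within the first $K$ positions. This uses that the expected number of short spacings involving any index $> K$ is of order $\sum_{i > K} 1/i^{2} = O(1/K)$. Consequently $(C_{1}^{(n)}, \ldots, C_{k}^{(n)})$ is, up to a vanishing error, a deterministic function of $(\xi_{1}, \ldots, \xi_{K})$, whose joint law converges as $K \to \infty$ to a product of independent Poissons of means $1, 1/2, \ldots, 1/k$ by direct inspection of the Bernoulli statistics.

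The second step is to show that the rescaled ranked large cycle lengths are determined, in the limit, only by $(\xi_{i})_{i > K}$, and hence are independent of the prefix $(\xi_{1}, \ldots, \xi_{K})$. The point is that any spacing of length $\geq \alpha \cdot 2n$ with $\alpha > 0$ must start at some position $> K$ once $n$ is large: otherwise one would need no $1$ in a long interval of length $\Theta(n)$ starting within $\{1,\ldots,K\}$, which has vanishing probability since $\prod_{i \leq K+\Theta(n)}(1 - 1/i) \to 0$. Combined with the first step, and using the fact that the marginal $\mathsf{PD}(1)$ limit \eqref{eq:cvPD} is still obtained from the tail Bernoullis after removing the $O(1)$ elements involved in short cycles, this yields the asymptotic independence of $(C_{i}^{(n)})_{i \geq 1}$ from $(M_{i}^{(n)}/(2n))_{i \geq 1}$ in the sense of finite-dimensional distributions.

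The main obstacle I anticipate is the rigorous transfer of the unconditional $\mathsf{PD}(1)$ limit from \eqref{eq:cvPD} to the conditional distribution given the prefix $(\xi_{1}, \ldots, \xi_{K})$. This amounts to showing that the surgery removing the elements lying in the prefix, which concerns only $O(1)$ elements of $\sigma_{2n}$, does not affect the $\mathsf{PD}(1)$ limit of the rescaled ranked large cycle lengths. Such stability follows from the continuity of $\mathsf{PD}(1)$ under bounded perturbations of the total mass (since $\sum_{i} X_{i} = 1$ a.s.), together with the standard quantitative bounds available for the Feller coupling. Once these estimates are in place, the joint convergence in the sense of finite-dimensional marginals follows routinely.
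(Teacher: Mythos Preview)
Your plan is correct and the acknowledged obstacle can indeed be overcome: conditionally on the prefix $(\xi_1,\dots,\xi_K)$, the straddling spacing has length $o(n)$ with high probability (since $\PP(\text{first }1\text{ in the tail}>\alpha n)=K/(\alpha n)\to 0$), so the top ranked rescaled cycle lengths come from tail spacings only and the $\mathsf{PD}(1)$ limit survives the surgery. The route, however, differs from the paper's. The paper does not unpack the Feller coupling; instead it observes that, conditionally on $C_1^{(n)}=c_1,\dots,C_{k_0}^{(n)}=c_{k_0}$, the remaining cycle counts $C_{k_0+1}^{(n)},\dots$ are distributed as independent Poissons $P_{k_0+1},\dots,P_{2n}$ conditioned on $\sum i P_i = 2n-\sum_{i\le k_0} i c_i$, which is again an instance of the ``conditioning relation + logarithmic condition'' framework of \cite{arratia2000limits}; their Theorem~3.2 then directly gives the $\mathsf{PD}(1)$ limit for the conditional large cycles, hence asymptotic independence. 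Your Feller-coupling argument is more hands-on and self-contained---it makes the decoupling geometrically visible as a prefix/tail split of an explicit Bernoulli sequence---at the cost of the perturbation/stability step you flagged. The paper's argument is shorter because it delegates that work to a black-box theorem; yours would be preferable if one wanted to avoid importing the full \cite{arratia2000limits} machinery. Note that the two are close cousins: the Feller coupling is precisely one of the tools underlying the results cited from \cite{arratia2000limits}.
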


\begin{proof}[Proof of Corollary \ref{cor:diameter} given Lemmas \ref{lem:core} and \ref{lem:decoupling}]
Given the last two lemmas, the proof of Corollary \ref{cor:diameter} is rather straightforward. Indeed, combining \eqref{eq:connectedgrand} and \eqref{eq:pairgrand} we deduce that with probability at least $1- 2 \varepsilon$ all vertices of $ \mathsf{ConfigModel}( \mathcal{U}_{2n})$ are linked to $ \mathrm{V}^{(n)}_{\delta n}$, and if $ \delta n \geq A$ then all the vertices of $ \mathrm{V}^{(n)}_{\delta n}$ are linked to each other. Hence the diameter of the graph is less than $3$ and we even have $ \mathrm{d_{gr}}(u,v) \leq 2$ as soon as $u$ or $v$ has degree larger than $A$. On the other hand, we clearly have $ \mathrm{Diameter}( \mathsf{ConfigModel}( \mathcal{U}_{2n})) \geq 2$ with high probability. To see this, just consider a vertex $u$ with degree of order $O(1)$: this vertex cannot be linked to all the vertices (there are $\approx \log n$ vertices) and thus must be at distance at least $2$ from another vertex in $\mathsf{ConfigModel}( \mathcal{U}_{2n})$. By this reasoning, up to an event of probability $2 \varepsilon$, to decide whether the diameter of $\mathsf{ConfigModel}( \mathcal{U}_{2n})$ is $2$ or $3$, one must know whether or not we can find two vertices $ u,v$ such that 
  \begin{eqnarray}\label{eq:event}\begin{array}{l}\mathrm{deg}(u) \leq A,\\ \mathrm{deg}(v) \leq A, \end{array} \quad \mbox{ and } u,v \mbox{ do not share a neighbor in } \mathrm{V}^{(n)}_{\delta n}.  \end{eqnarray}
But clearly, by Lemma \ref{lem:decoupling} and the definition of the configuration model, one can describe the limit in distribution of the connections of all vertices of low degree as follows. Consider $P_{1},P_{2}, \dots$ independent Poisson random variables of means $ 1, \frac{1}{2}, \frac{1}{3}, \dots$. When $P_{i}>0$, we imagine that we have $P_{i}$ vertices $v_{1}^{(i)}, \dots, v_{P_{i}}^{(i)}$ of ``degree $i$'', each of them carrying $i$ independent uniform random variables over $[0,1]$ denoted by $$U_{1}(v_{k}^{(i)}), \dots , U_{i}(v_{k}^{(i)}), \qquad \forall i \geq 1, \forall 1 \leq k \leq P_{i}.$$ Independently of this, consider $ \mathbf{X} = (X_{1}>X_{2}>\cdots)$ a partition of unity distributed according to $ \mathsf{PD}(1)$. The vertices $v_{k}^{(i)}$ describe the low degree vertices, $X_{a}$ describe the large degree vertices and $U_{\ell}(v_{k}^{(i)})$ describe the connections of those low degree vertices to the large degree vertices. Hence, for two distinct ``small vertices'' $v^{(i)}_{k}$ and $v_{\ell}^{(j)}$, we say that they share a neighbor if we can find $1 \leq a \leq i$ and $1 \leq b \leq j$ such that $U_{a}(v^{(i)}_{k})$ and $U_{b}(v^{(j)}_{\ell})$ fall into the same component of $[0,1]$ induced by $ \mathbf{X}$. If we put 
$$ \xi = \mathbb{P}( \mbox{there exist two vertices which do not share a neighbor}),$$
then we leave the reader verify that $\xi$ is the limit of the probability of the event in \eqref{eq:event}. But since the event in \eqref{eq:event} is, up to an error of probability at most $2 \varepsilon$, the same event as $\{\mathrm{Diameter}({ \mathbb{M}}_{n}) = 3 \}$ when $n$ is large, the corollary is proved. Finally, elementary computations\footnote{See also the proof of Lemma \ref{lem:core} below.} show that in the limit model, almost surely, the number of pairs of small vertices with no common neighbour is finite and that $0 < \xi <1$.
\end{proof} 

We now prove the two lemmas. \medskip 
 
\begin{proof}[Proof of Lemma \ref{lem:core}]
We start with the first point. Let $ \delta>0$ and let us write 
$$ \sum_{ u \in  \mathrm{V}^{(n)}_{\delta n}} \mathrm{deg}(u)  = 2n (1-   E(\delta,n)).$$
By the convergence of the renormalized degrees towards the Poisson--Dirichlet partition (actually, we only use the fact that this is a partition of unity), for any $ \varepsilon \in (0, 1/2)$ we can find $\delta >0$ such that for all $n$ large enough we have 
$$  \mathbb{P}(E(\delta,n) \geq \varepsilon) \leq \varepsilon.$$
By the definition of the configuration model, conditionally on $ \mathcal{U}_{2n}$, the probability that a given vertex  of degree $d \geq 1$ is not linked to $ \mathrm{V}^{(n)}_{\delta n}$ is bounded above by $E(n, \delta) ^{d}$. Hence we deduce that the probability of the event in \eqref{eq:connectedgrand} is bounded as follows:   \begin{eqnarray*} 1-\mathbb{P}\left( \mbox{all vertices have a neighbor in }  \mathrm{V}^{(n)}_{ \delta n}\right) & \leq & \mathbb{E}\left[ \sum_{ d \in \mathcal{U}_{2n}} E(n, \delta) ^{d} \right ] \\
 & \leq & \mathbb{P}( E(n, \delta) \geq  \varepsilon) + \mathbb{E}\left[ \sum_{d \in \mathcal{U}_{2n}}  \varepsilon^{d}\right]\\ & \underset{ \eqref{eq:sizebias}}{=} &\mathbb{P}( E(n, \delta) \geq  \varepsilon) + \sum_{k=1}^{2n}  \frac{ \varepsilon ^{k}}{k} \leq 3 \varepsilon.  \end{eqnarray*} 
   The second point is similar.  Conditionally on $ \mathcal{U}_{2n}$, if $w$ is a vertex of degree larger than $\delta n$ then the probability that any fixed vertex $v$ of degree $d$ is not linked to $w$ is bounded above by $ ( 1- \frac{ \delta }{2})^{d}$. Since there are deterministically less than $ \frac{2}{\delta}$ vertices in $ \mathrm{V}_{  \delta n}^{(n)}$, the complement probability of \eqref{eq:pairgrand} is bounded above by
   \begin{eqnarray*}
    \frac{2}{\delta} \mathbb{E}\left[ \sum_{ d \in \mathcal{U}_{2n} } \left(1 - \frac{\delta}{2}\right)^{d}  \ind_{ d \geq A} \right] = \frac{2 }{\delta} \sum_{k=A}^{2n} \frac{\left(1 - \frac{\delta}{2}\right)^{k}}{k} \leq \left(\frac{2}{\delta}  \right)^{2} \left(1- \frac{\delta}{2}\right)^{A} \frac{1}{A}. \end{eqnarray*}
For fixed $\delta>0$, the right-hand side can be made arbitrary small by taking $A$ large and this proves the second point of the lemma.
\end{proof}

\begin{proof}[Proof of Lemma \ref{lem:decoupling}]
This is essentially a corollary of the very general results presented in \cite{arratia2000limits}. More precisely, the convergence in law of the small cycle lengths count is a corollary of their Theorem 3.1, whereas the convergence of the large cycle lengths is the convergence towards $ \mathsf{PD}(1)$ recalled in the introduction. To see that they are asymptotically independent, just notice that for fixed $k_{0}$, conditionally on $C_{1}^{(n)}=c_{1}, \dots , C_{k_{0}}^{(n)}=c_{k_{0}}$ for fixed values $c_{1}, \dots c_{k_{0}} \geq 0$, the law of $C_{k_{0}+1}^{(n)}, \dots , C_{2n}^{(n)}$ is distributed as $P_{k_{0}+1}, \dots, P_{2n}$ conditioned on $ \sum_{i=k_{0}+1}^{2n} = 2n - \sum_{i=1}^{k_{0}} i c_{i}$ which again falls in the general ``conditioning relation and logarithmic condition'' of \cite{arratia2000limits}, for which we can thus apply their Theorem 3.2 giving in particular the convergence towards Poisson--Dirichlet.
\end{proof}
 
\section{Dynamical exploration of random surfaces} \label{sec:peeling}
In this section we describe the ``dynamical'' exploration of the random surface $  \mathbf{M}_{ \mathcal{P}}$. Our approach is close to the peeling process used in the theory of random planar maps, see \cite{Bud15,curienpeeling}. %The explorations that we will consider are closer to those of \cite{Bud15} than those of \cite{Ang03} in the sense that it is possible that two different vertices of the explored part actually correspond to the same vertex in the final map.

\subsection{Exploration of surfaces and topology changes} \label{sec:topologychanges}
We fix a configuration $ \mathcal{P}$ of (oriented) polygons with $| \mathcal{P}|=n$ whose sides have been labeled from $1$ to $2n$, and  $\omega \in \mathcal{I}_{2n}$ a pairing of its edges. Since everything we describe below is deterministic, we do not need to assume here that $\omega$ is random.

We will construct step by step the discrete surface $ \mathbf{M}_{ \mathcal{P}}$ that is created by matching the edges 2 by 2 according to $\omega$. More precisely, we will create a sequence $$ S_{0} \to S_{1} \to  \dots \to S_{n} = \mathbf{M}_{ \mathcal{P}}$$ of ``combinatorial surfaces''  where $S_{0}$ is made of the set of labeled polygons whose perimeters are specified by $ \mathcal{P}$ and where we move on from $ S_{i} $ to $S_{i+1}$ by identifying two edges of the pairing $\omega$. More specifically, $S_{i}$ will be a union of labeled maps with distinguished faces called the holes (they are in light green in the figures below). The holes are made of the edges which are not yet paired. The set of these edges will be called the boundary of the surface and be denoted by $ \partial S_{i}$. Clearly, if $2| \mathcal{P}|= |\partial S_{0}|=2n$, we thus have 
$$ | \partial S_{i}| =  2n-2i.$$
\begin{figure}[!h]
 \begin{center}
\begin{overpic}[width=15cm]{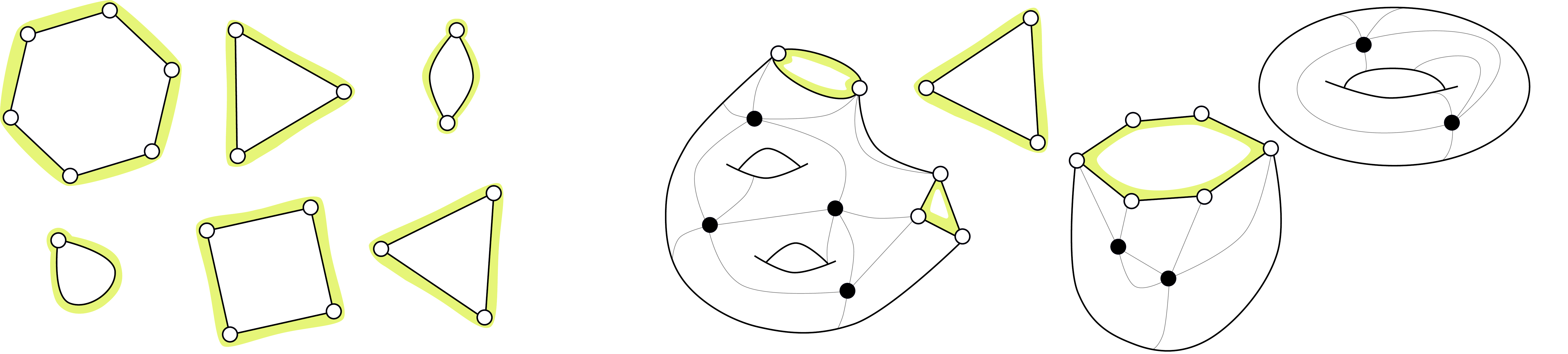}
\put (34,12) {etc.}
\put (98,12) {etc.}
\end{overpic}
 \caption{Starting configuration (on the left) and a typical state of the exploration (on the right). Here and later the labeling of the oriented edges does not appear for the sake of visibility. The final vertices of the graph are black dots whereas ``temporary'' vertices are in white. Notice on the right-hand side that $S_{i}$ contains a closed surface without boundary: if this happens, the final surface $S_{n} = \mathbf{M}_{ \mathcal{P}}$ is disconnected.}
 \end{center}
 \end{figure}
 
To go from $S_{i}$ to $S_{i+1}$ we select an edge on $ \partial S_{i}$ which we call \emph{the edge to peel} (in red in the figures below) and identify it with its partner edge in $\omega$ (in green in the figures below), also belonging to $\partial S_{i}$. We now describe  the possible outcomes of  the peeling of one edge. The reader should keep in mind that our surfaces are always labeled and oriented and that when identifying two edges we glue them in a way compatible with the orientation. We shall also pay particular attention to the process of creation of vertices. Indeed, the initial vertices of the polygons are not all vertices in the final map $ \mathbf{M}_{\mathcal{P}}$: in the figures below those ``temporary'' vertices belonging to holes are denoted by white dots, whereas actual vertices of $ \mathbf{M}_{\mathcal{P}}=S_{n}$ are denoted by black dots and called ``true'' vertices. 

 \begin{figure}[H]
  \begin{center}
  \begin{overpic}[width=13cm]{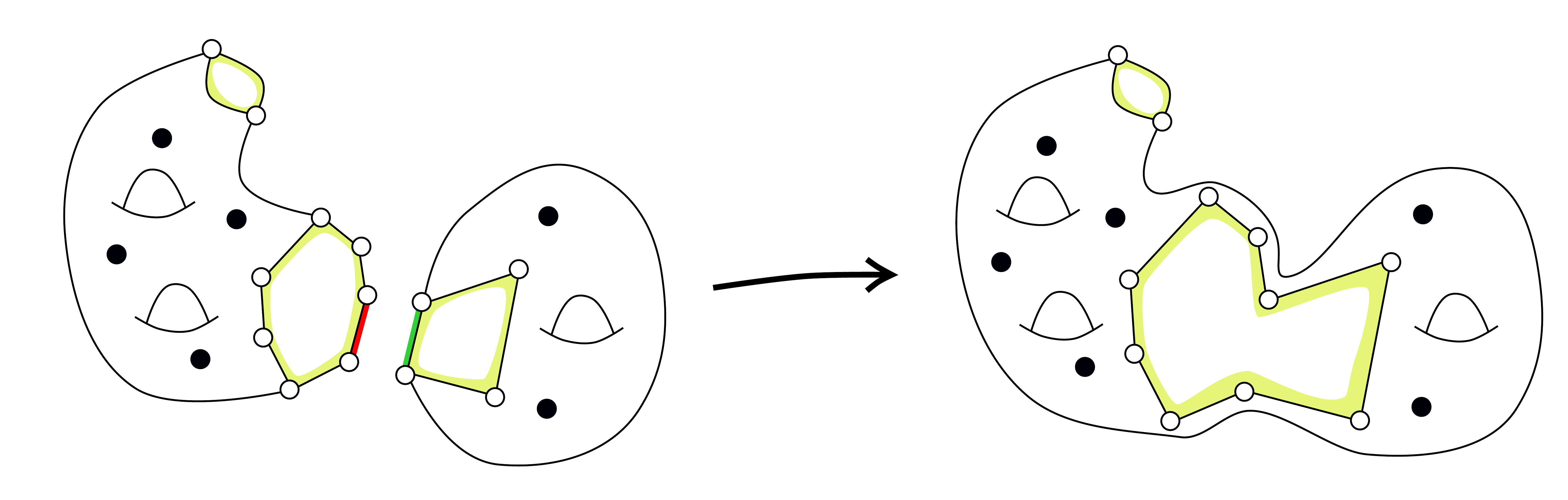}
   \put (19,12) {$p_1$}
   \put (28,10) {$p_2$}
   \put (73.5,9.5) {$p_1+p_2-2$}
  \end{overpic}
  \caption{ \label{fig:case1}If we identify two edges of different components, the holes and the components merge (their genuses and number of true vertices add up). In particular the perimeter of the resulting boundary is the sum of the perimeters of the former boundaries minus $2$.}
  \end{center}
  \end{figure}
  
   \begin{figure}[H]
  \begin{center}
  \begin{overpic}[width=12cm]{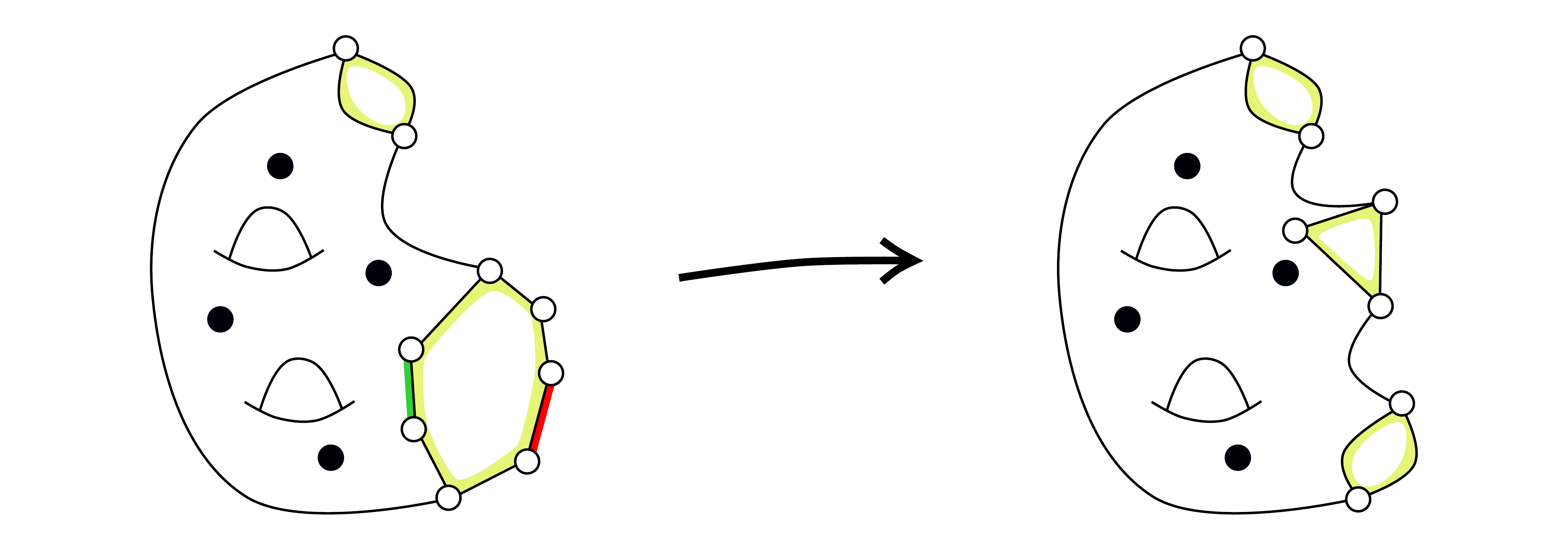}
   \put (29,10) {$p_1$}
   \put (89,19) {$p_1'$}
   \put (90.5,4) {$p_1''$}
  \end{overpic}  
  \caption{\label{fig:case2}If we identify two edges on the same hole, then this hole splits into two holes of perimeters $p'_{1}$ and $p''_{1}$ such that $p'_{1} + p''_{1}+2$ is the perimeter of the initial boundary.}
  \end{center}
  \end{figure}
  
     \begin{figure}[H]
  \begin{center}
  \begin{overpic}[width=11cm]{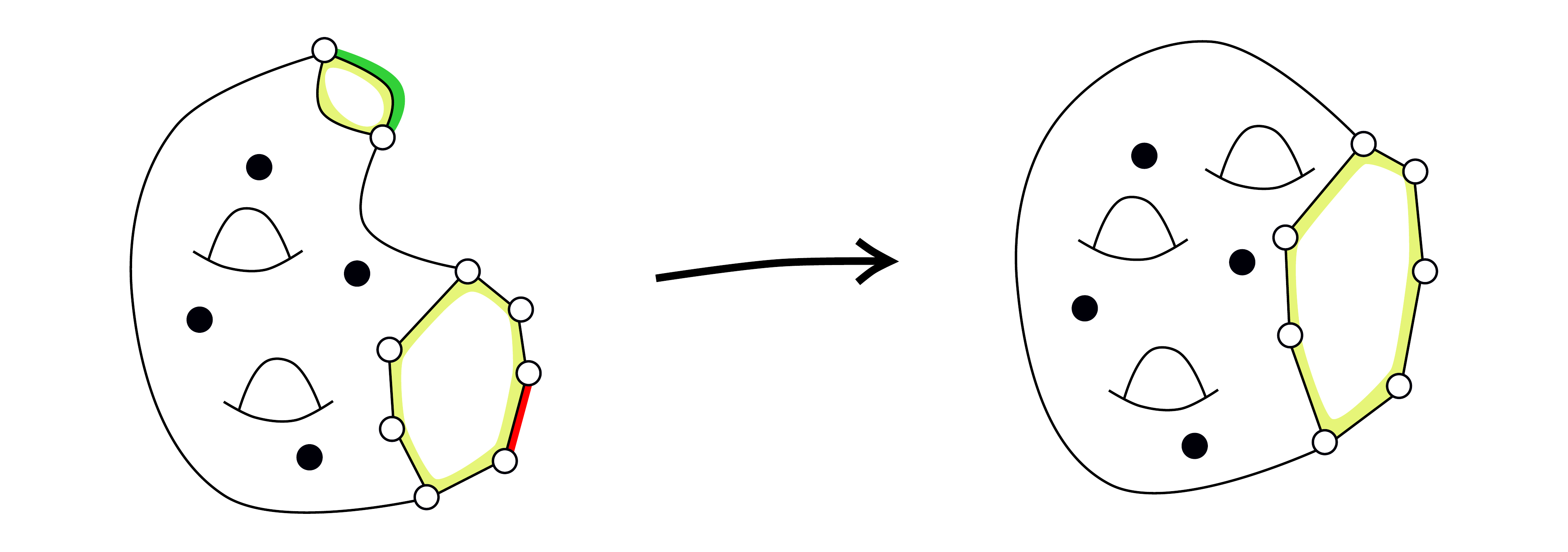}
   \put (28,10) {$p_1$}
   \put (26.5,30) {$p_2$}    
   \put (91,14) {$p_1+p_2-2$}    
  \end{overpic}    
  \caption{ \label{fig:case3} If we identify two edges of different holes belonging to the same component, then the holes merge, adding one unit to the genus of the map. As in Fig.~\ref{fig:case1}, the perimeter of the new hole is the sum of the perimeters of the initial holes minus $2$.}
  \end{center}
  \end{figure}

We now describe the possible steps that yield to creation of true vertices: 

     \begin{figure}[H]
  \begin{center}
  \begin{overpic}[width=15cm]{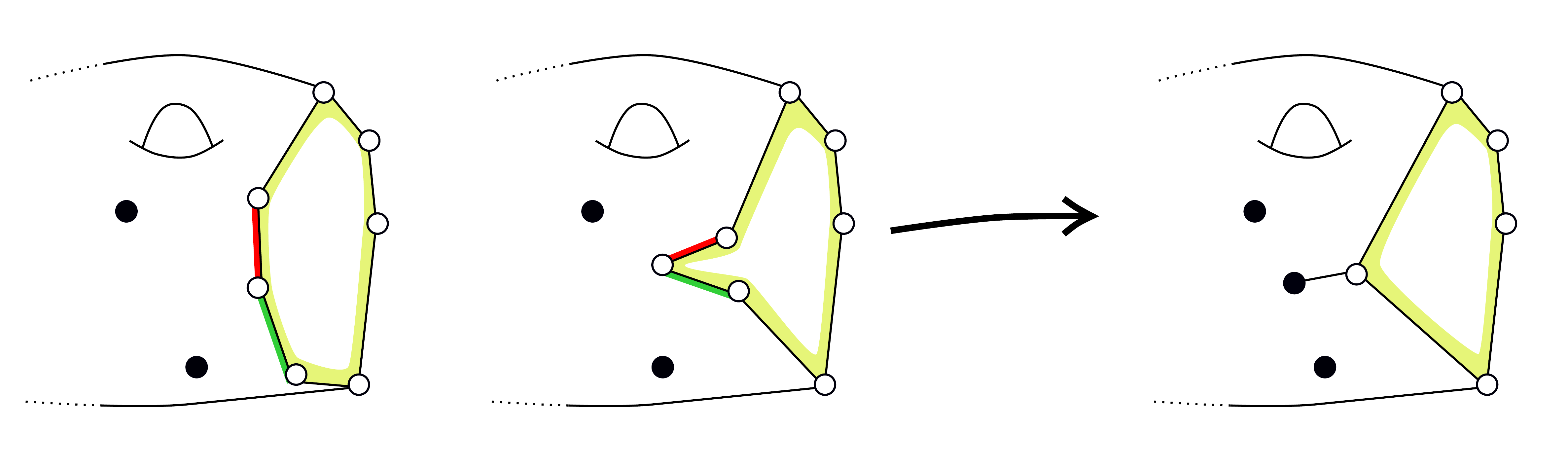}
  \put (20,13) {$p$}
  \put (27,13) {{\huge $=$}}
  \put (49.5,13) {$p$}
  \put (89.5,12) {$p-2$}
  \end{overpic}
  \caption{\label{fig:case4} As a special case of Fig.~\ref{fig:case2}, if we identify two neighboring edges on the same hole, then one of the two holes created has perimeter $0$: we have formed a true vertex.}
  \end{center}
  \end{figure}
  
       \begin{figure}[H]
  \begin{center}
  \begin{overpic}[width=15cm]{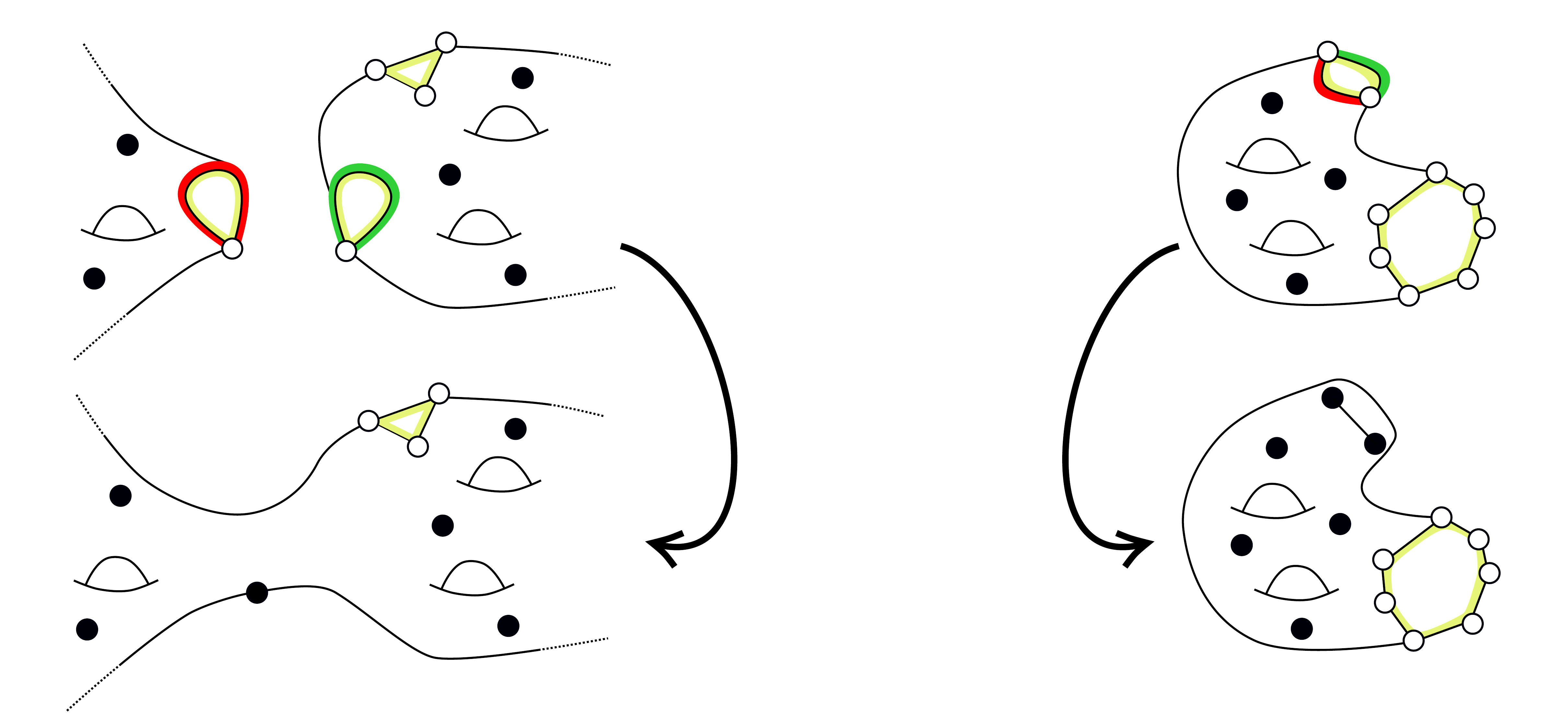}
   \put (43.5,28.5) {creation of}
   \put (45.5,26) {$1$ vertex}   
  \put (63,8.5) {creation of}
   \put (65,6) {$2$ vertices}   
  \end{overpic}
  \caption{\label{fig:case5}The two other ways to create true vertices: if we identify (left) two loops (holes of perimeter 1), then we create one true vertex. If we identify (right) the two edges of a hole of perimeter $2$, then we create two true vertices.}
  \end{center}
  \end{figure}

\subsection{Peeling exploration}
We now move on to our random setting and suppose that $\omega \in \mathcal{I}_{2n}$ is independent of the starting configuration of labeled polygons. On top of $\omega$, the sequence $S_{0} \to S_{1} \to \dots \to  S_{n}$ depends on an \emph{algorithm} called the peeling algorithm which is simply a way to pick the next edge to peel $ \mathcal{A}(S_{i}) \in \partial S_{i}$. %This edge is then glued to its partner via $\omega$ to create $S_{i+1}$. 
The function $ \mathcal{A}(S_{i})$ can be deterministic or may use another  source of randomness, as long as it is independent of $\omega$: we call such an algorithm \emph{Markovian}. Highlighting the dependence in $ \mathcal{A}$, we can thus form the random exploration sequence 
$S_{0}^{ \mathcal{A}} \to S_{1}^{ \mathcal{A}} \to \dots \to S_{n}^{ \mathcal{A}} =  \mathbf{M}_{ \mathcal{P}}$ by starting with $S_{0}^{ \mathcal{A}}$, the initial configuration made of the labeled polygons whose perimeters are prescribed by $ \mathcal{P}$. To go from $ S_{i}^{ \mathcal{A}}$ to $ {S}_{i+1} ^{ \mathcal{A}}$, we perform the identification of the edge $ \mathcal{A}(S_{i})$ together with its partner in the pairing $ \omega$. When $\omega$ is uniform and $ \mathcal{A}$ is Markovian, the sequence $(S_{i}^{ \mathcal{A}} : 0 \leq i \leq n)$ is a simple (inhomogeneous) Markov chain:
\begin{proposition} \label{prop:markov} If the gluing $\omega$ is uniformly distributed and independent of the labeled polygons of $ \mathcal{P}$, then for any Markovian algorithm $ \mathcal{A}$, the exploration $(S_{i}^{ \mathcal{A}})_{ 0 \leq i \leq n}$ is an inhomogeneous Markov chain whose probability transitions are described as follows. Conditionally on $ S_{i}^{ \mathcal{A}}$ and on $ \mathcal{A}(S_{i}^{ \mathcal{A}})$, we pick $E_{i}$ uniformly at random among the $2(n-i)-1$ edges of $\partial S_{i}^{ \mathcal{A}} \backslash \{\mathcal{A}(S_{i}^{ \mathcal{A}})\}$, and identify $\mathcal{A}(S_{i}^{ \mathcal{A}})$ with $E_{i}$.
\end{proposition}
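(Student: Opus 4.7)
The plan is to exploit the classical \emph{spatial Markov property} of a uniform matching: if $\omega \in \mathcal{I}_{2n}$ is uniform and one conditions on any fixed collection of $i$ disjoint pairs being contained in $\omega$, then the restriction of $\omega$ to the $2(n-i)$ unrevealed elements is uniform on $\mathcal{I}_{2(n-i)}$, and independent of the revealed pairs. This is a one-line counting fact since $\#\mathcal{I}_{2n}=(2n-1)!!$ and every completion of a fixed partial matching has the same probability.

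First, I would introduce the natural filtration $(\mathcal{F}_i)_{0 \leq i \leq n}$ generated by $(S_j^{\mathcal{A}})_{j \leq i}$ together with whatever independent auxiliary randomness the Markovian algorithm $\mathcal{A}$ has used to produce the peeled edges $\mathcal{A}(S_0^{\mathcal{A}}), \ldots, \mathcal{A}(S_{i-1}^{\mathcal{A}})$. The key observation is that the only information about $\omega$ contained in $\mathcal{F}_i$ is precisely the list of the $i$ pairs $\{\mathcal{A}(S_j^{\mathcal{A}}), E_j\}$ with $j<i$ that have been identified during the first $i$ steps, and that the set of sides not yet revealed is exactly the boundary $\partial S_i^{\mathcal{A}}$, of cardinality $2(n-i)$.

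Second, combining the spatial Markov property above with the independence of $\omega$ from $\mathcal{A}$'s auxiliary randomness, one obtains that conditionally on $\mathcal{F}_i$ the restriction of $\omega$ to $\partial S_i^{\mathcal{A}}$ is uniform on $\mathcal{I}_{2(n-i)}$. The edge $\mathcal{A}(S_i^{\mathcal{A}})$ is $\mathcal{F}_i$-measurable up to further independent randomness (which does not affect $\omega$), so conditionally on $\mathcal{F}_i$ and on $\mathcal{A}(S_i^{\mathcal{A}})$ its partner $E_i$ under $\omega$ is uniformly distributed over the $2(n-i)-1$ elements of $\partial S_i^{\mathcal{A}} \setminus \{\mathcal{A}(S_i^{\mathcal{A}})\}$.

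Finally, the deterministic update rules of Section~\ref{sec:topologychanges} express $S_{i+1}^{\mathcal{A}}$ as a function of $(S_i^{\mathcal{A}}, \mathcal{A}(S_i^{\mathcal{A}}), E_i)$, so the conditional law of $S_{i+1}^{\mathcal{A}}$ given $\mathcal{F}_i$ is the one announced in the proposition and depends on the past only through $S_i^{\mathcal{A}}$. The only subtlety—and this is not really an obstacle, just bookkeeping—is to keep the auxiliary randomness used by $\mathcal{A}$ separate from $\omega$ when taking the conditional expectation; this is exactly what the definition of a Markovian algorithm guarantees.
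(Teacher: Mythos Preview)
Your proof is correct and follows essentially the same approach as the paper: both rest on the observation that, conditionally on the pairs already revealed, the restriction of a uniform matching to the remaining boundary edges is again uniform, so the partner of the peeled edge is uniform on $\partial S_i^{\mathcal{A}}\setminus\{\mathcal{A}(S_i^{\mathcal{A}})\}$. The paper states this in two sentences by induction; your version is a more explicit unpacking of the same argument, with the filtration and the auxiliary randomness of $\mathcal{A}$ spelled out.
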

\proof It suffices to notice by induction that at each step $i \geq 0$ of the exploration, conditionally on $ S_{i}$, the pairing $\tilde{\omega}$ of the (unexplored) edges of $\partial S_{i}^{ \mathcal{A}}$ is uniform. Hence, if an edge $\mathcal{A}(S_{i}^{ \mathcal{A}})$ is picked independently of $\tilde{\omega}$ then its partner is a uniform edge on $ \partial S_{i}^{ \mathcal{A}} \backslash \{ \mathcal{A}(S_{i}^{ \mathcal{A}})\}$.  \endproof 

\begin{remark}[Uniform peeling and split/merge dynamics] Probably the most obvious Markovian peeling algorithm is the following. For $i \geq 0$, given the discrete surface $S_{i}$, we pick the next edge to peel uniformly at random on  $ \partial S_{i}$ (independently of the past operations and of the gluing of the edges).\\
Although very natural, we shall not use this peeling algorithm in this paper. We note however that the Markov chain this algorithm induces on the set of perimeters of the holes of $ S_{i}$ is very appealing. If $\{p_{1}, \dots , p_{k}\}$ is the configuration of the perimeters at time $i$, the next state is obtained by first sampling independently two indices $I,J \in \{1,2, \dots, k\}$ proportionally to $p_{1}, \dots, p_{k}$. If $I \ne J$, then we replace the two numbers $p_{I}$ and $p_{J}$ by $p_{I}+p_{J}-2$. If $I=J$, we replace $p_{I}$  by a uniform splitting of $p_{I}-2$ into $\{\{0,p_{I}-2\}, \{1, p_{I}-2\}, \dots, \{p_{I}-2,0\}\}$. 
This is a discrete version of the split-merge dynamic, which preserves the Poisson--Dirichlet law, considered in \cite{diaconis2004,schramm2005compositions}, except that we have a deterministic ``erosion'' of $-2$ at each step in the above dynamic. 
\end{remark}

The strength of the above proposition is that, as for planar maps \cite{curienpeeling}, we can use \emph{different} algorithms $ \mathcal{A}$ to explore the \emph{same} random surface $ \mathbf{M}_{ \mathcal{P}}$ and then to get \emph{different} types of information. We will see this motto in practice in the following sections. When exploring our random surface with a given peeling algorithm, we will always write $( \mathcal{F}_{i})_{0 \leq i \leq n}$ for the canonical filtration generated by the exploration.
\subsection{Controlling loops and bigons for good configurations}

In this subsection, we give rough bounds on the number of holes of perimeter $1$ and $2$ that appear during Markovian explorations of good sequences of configurations. We will call \emph{loop} (resp. \emph{bigon}) a hole of perimeter $1$ (resp. $2$). The purpose of such estimates will be to bound the number of steps at which the cases of Figure \ref{fig:case5} occur during the exploration, which will be useful several times in the next sections.

Recall from the introduction the definition of a good sequence $ (\mathcal{P}_{n})_{n \geq 1}$ of configurations and assume we explore $ \mathbf{M}_{ \mathcal{P}_{n}}$ using a Markovian algorithm. With an implicit dependence on $ \mathcal{P}_{n}$ and $n$, denote by $ (L_{i} : 0 \leq i \leq n)$  and $(B_{i} : 0 \leq i \leq n)$ the number of loops and bigons in  $\partial S_{i}$ during the exploration. We also write $ \pi(i)$ for the perimeter of the hole to which the peeled edge at time $i$ belongs. 
\begin{proposition}[Bounds on loops and bigons] \label{prop:loopbigon} For any configuration and any Markovian peeling algorithm we have the stochastic dominations
$$ \sup_{0 \leq i \leq n} L_{i}  \leq L_{0} + X_{n}, \qquad \sup_{0 \leq i \leq n} B_{i} \leq B_{0} + L_{0} + X_{n} \quad \mbox{ and } \qquad \sum_{i=0}^{n-1} \ind_{ \pi(i)=2} \leq B_{0}+L_{0} +X_{n}$$
 $$ \mbox{ where} \quad  X_{n} = 2 \sum_{i=1}^{n} \mathrm{Bern} \left( \frac{4}{i} \right),$$ with independent Bernoulli random variables. In particular, if $( \mathcal{P}_{n})_{n \geq 1}$ is a good sequence of configurations then we have $\sup L_{i} = o ( \sqrt{n})$, also $\sup_{i} B_{i} =o(n)$ and $\sum_{i=0}^{n-1} \ind_{ \pi(i)=2} = o(n)$ with very high probability. Finally, for any good sequence of configurations, for any $ \varepsilon>0$, we have
 $$  
 \frac{1}{ \sqrt{n}}  \mathbb{E}\left[\sum_{i=0}^{[(1- \varepsilon)n]} \ind_{\pi(i)=1}\right] \xrightarrow[n\to\infty]{} 0.$$
 \end{proposition}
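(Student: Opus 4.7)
The plan is to bound the one-step increments of $L_i$, $B_i$ and $\ind_{\pi(i)=2}$ by inspecting the five topology-change cases (Figures~\ref{fig:case1}--\ref{fig:case5}) and applying the Markov property of Proposition~\ref{prop:markov}. A direct case analysis shows that a loop can only be \emph{net} created through a splitting operation of type Figure~\ref{fig:case2}: if the peeled edge sits on a hole of perimeter $p$ and its partner is the $k$-th edge around that same hole, the two sub-holes have perimeters $k-1$ and $p-k-1$, so at most two loops are created, and only when $k \in \{2, p-2\}$. Similarly, bigons are created either by a split (at positions $k \in \{3, p-3\}$) or in a merge of type Figure~\ref{fig:case1}/\ref{fig:case3} where the merged perimeters are $(p,p')\in\{(1,3),(3,1)\}$, in which case a loop is \emph{consumed} in the process; all other outcomes either destroy loops and bigons or leave their counts unchanged.

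Since by Proposition~\ref{prop:markov} the partner of the peeled edge is uniform among the $2(n-i)-1$ remaining boundary edges, the probability that step $i$ creates a loop or bigon via a split is at most $4/(2(n-i)-1) \leq 4/(n-i)$, and the number of creations per step is at most~$2$. Coupling each step with an auxiliary independent uniform variable and reindexing via $i \leftrightarrow n-i$ yields the stochastic dominations $\sum_i \Delta L_i^+ \leq X_n$ and $\sum_i \Delta B_i^+\big|_{\text{split}} \leq X_n$ where $X_n = 2\sum_{i=1}^{n}\mathrm{Bern}(4/i)$. The bigons created by merges are in bijection with loop-consumption events, hence their total is bounded by the total number of loops ever, i.e.\ by $L_0+X_n$. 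Combining: $\sup_i L_i \leq L_0+X_n$ and $\sup_i B_i \leq B_0+L_0+X_n$ (absorbing the factor $2$ into $X_n$). Finally, each occurrence of $\pi(i)=2$ destroys a bigon in any of the outcomes of Figures~\ref{fig:case1}, \ref{fig:case2}, \ref{fig:case3}, \ref{fig:case5}, so $\sum_i\ind_{\pi(i)=2}$ is bounded by the total number of bigons ever in existence, namely $B_0+L_0+X_n$.

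Because $\mathbb{E}[X_n] = 8\sum_{i=1}^n 1/i = O(\log n)$ and $X_n$ concentrates sharply around its mean (Bernstein for sums of independent Bernoullis), the good-sequence hypotheses $L_0=o(\sqrt n)$ and $B_0=o(n)$ immediately give $\sup_i L_i = o(\sqrt n)$, $\sup_i B_i = o(n)$ and $\sum_i\ind_{\pi(i)=2}=o(n)$ with high probability. For the last claim, each loop is destroyed as soon as it is peeled, so $\sum_{i=0}^{n-1}\ind_{\pi(i)=1}$ is bounded by the total number of loops ever, namely by $L_0+\sum_i\Delta L_i^+ \leq L_0+X_n$. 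Taking expectations gives $\mathbb{E}\bigl[\sum_{i=0}^{n-1}\ind_{\pi(i)=1}\bigr]\leq L_0+\mathbb{E}[X_n] = o(\sqrt n) + O(\log n) = o(\sqrt n)$, and the truncated range $[0,(1-\varepsilon)n]$ is a fortiori fine.

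The main delicate point, I expect, is the bookkeeping for the "merge with a loop" scenario that creates bigons (perimeters $(1,3)$ or $(3,1)$): this cross-interaction is what forces the additional $L_0$ term in the bigon bound, and identifying each such event with a loop consumption is the key to controlling its total contribution. The coupling realising the per-step bad events as independent Bernoullis despite the Markov chain structure is standard but still requires some care to check that one may replace quantities depending on $\mathcal{F}_i$ by deterministic dominating rates $4/(n-i)$.
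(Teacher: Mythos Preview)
Your argument for the final convergence has a genuine gap. You overlook the case where a peeled loop is merged with a bigon: the resulting hole has perimeter $1+2-2=1$, so it is again a loop and $\Delta L_i=0$ even though $\pi(i)=1$. Hence the ``total number of loops ever'' is not $L_0+\sum_i(\Delta L_i)^+$; each loop--bigon merge manufactures a fresh loop, invisible to the positive-part sum, that can immediately be peeled again. You correctly flag the $(1,3)$ loop--triangle interaction as delicate, but it is this $(1,2)$ interaction that breaks your counting. Even with careful bookkeeping, bounding $\sum_i\ind_{\pi(i)=1}$ by the number of loops ever in existence only yields an estimate of order $L_0+B_0+O(X_n)=o(n)$, since the recycling can in principle consume the whole bigon supply; the statement requires $o(\sqrt n)$. (The same oversight also appears in your bigon bound, but there it is harmless: what you actually need is the number of $(1,3)$-merges, and each of those contributes $-1$ to $\Delta L_i$, hence their count is at most $\sum_i(\Delta L_i)^-\leq L_0+X_n$. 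The paper sidesteps this detour by tracking $L_i+B_i$ jointly, checking that $\Delta(L_i+B_i)\leq 0$ in every merge scenario.)

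The paper recovers the final convergence by a bootstrap. One verifies
\[
\mathbb{E}\bigl[\ind_{\pi(i)=1}+\Delta L_i\,\bigm|\,\mathcal{F}_i\bigr]\ \leq\ \frac{2}{2(n-i)-1}+\ind_{\pi(i)=1}\cdot\frac{2B_i}{2(n-i)-1},
\]
the second term being precisely the loop--bigon recycling event. Summing over $i\leq(1-\varepsilon)n$, replacing $B_i$ by $\sup_j B_j$, and splitting on $\{\sup_j B_j\geq\varepsilon^2 n\}$ (superpolynomially unlikely by the second domination already established), one obtains
\[
(1-\varepsilon)\,\mathbb{E}\Bigl[\sum_{i\leq(1-\varepsilon)n}\ind_{\pi(i)=1}\Bigr]\ \leq\ L_0+O(\log n)+o(1)\ =\ o(\sqrt n).
\]
The truncation at $(1-\varepsilon)n$ is essential: it is what forces the recycling coefficient $\tfrac{2B_i}{2(n-i)-1}$ to be small, and your argument, which treats the full range $[0,n-1]$ indiscriminately, cannot exploit it.
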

\proof
Let us describe first the variation of the number of bigons and loops during a peeling step using Section \ref{sec:topologychanges}. One can create loops or bigons if at step $i \geq 0$ we are peeling on a $p$-gon with $p \geq 3$ and identify the peeled edge with the second edge on its right or left along the same hole (creation of one loop) or the third edge on its right or left along the same hole (creation of a bigon). When $p=4$, peeling the second edge on the right or left are the same event but it yields to creation of $2$ loops, and similarly when $p=6$ for bigons. In any of these cases we have $\Delta L_{i} \leq 2$ and $\Delta B_{i} \leq 2$, and the conditional (on $\mathcal{F}_i$) probability of those events is bounded above by 
$$ \frac{4}{2(n-i)-1}.$$
In the case $p=2$, recall that the identification of both sides of a bigon only results in the disappearance of the bigon and the creation of $2$ true vertices.
Otherwise, the $i$-th peeling step identifies the peeled hole of perimeter $\pi(i)$ with another hole of perimeter we denote by $\xi(i)$. This results in the creation of a hole of perimeter $\pi(i)+\xi(i)-2$ (in the case $\pi(i)= \xi(i)=1$, i.e.~when we identify two loops, we just create a true vertex). 

\textsc{First domination.} By the above description, appart from the first kind of events where we identify two edges on the same hole, we notice that we always have $\Delta L_{i} \leq 0$. Hence, the distribution of $\Delta L_{i}$ conditionally on $ \mathcal{F}_{i}$ is stochastically dominated by 
$$ 2 \cdot \mathrm{Bern} \left( \frac{2}{2(n-i)-1}\right),$$
where  $\mathrm{Bern}(p)$ is a Bernoulli variable of parameter $p$. Clearly $\sup_{i <n}(L_{i}-L_{0})$ is stochastically dominated by $Z = 2\sum_{i =0}^{n-1}\mathrm{Bern} \left( \frac{2}{2(n-i)-1}\right)$ with independent Bernoulli variables, and this proves the first claim.

\textsc{Second domination.} We now focus on $\Delta (L_{i}+B_{i})= L_{i+1}-L_{i} + B_{i+1}-B_{i}$. Again, appart from the cases where we identify two edges on the same hole which result in $ \Delta (L_{i}+B_{i}) \leq 2$, we always have $\Delta (L_{i}+B_{i}) \leq 0$. Hence, conditionally on $ \mathcal{F}_{i}$, the distribution of $\Delta (L_{i}+B_{i})$ is stochastically dominated by 
$$ 2 \cdot \mathrm{Bern} \left( \frac{4}{2(n-i)-1}\right),$$ and the same reasoning as above yields the second point of the Proposition.

\textsc{Third domination.} With a closer look at the cases above, we see that the conditional distribution of $\ind_{\pi(i)=2} + \Delta (L_{i}+B_{i})$ given $ \mathcal{F}_{i}$ is again stochastically dominated by $ 2 \cdot \mathrm{Bern}(  \frac{4}{2(n-i)-1})$. Summing over $ 0 \leq i \leq n-1$ yields the result.

\textsc{Last convergence.} For the last convergence of the proposition, remark that
\begin{equation}\label{eqn:peeled_loops_one_step}
\mathbb{E}[ \ind_{ \pi(i)=1}+ \Delta L_{i} \mid \mathcal{F}_{i}] \leq \frac{2 }{2(n-i)-1} +  \frac{\ind_{\pi(i)=1} \cdot 2 B_{i}}{2(n-i)-1},
\end{equation}
 where the first term in the right hand side comes from splitting a hole of perimeter $p \geq 3$, and the second term comes from the case where a loop is peeled and identified with a bigon to produce a new loop. Taking expectation, after summing over $0 \leq i \leq (1- \varepsilon)n$, we get for $n$ large
 \begin{eqnarray*}\mathbb{E}\left[\sum_{i=0}^{[(1- \varepsilon)n]} \ind_{ \pi(i)=1}\right] &\leq& L_{0}+ \sum_{i=0}^{[(1- \varepsilon)n]} \frac{2}{2(n-i)-1} +  \mathbb{E}\left[\sum_{i=0}^{[(1- \varepsilon)n]} \frac{2}{2(n-i)-1} \ind_{ \pi(i)=1} \sup_{0 \leq j \leq n} B_{j} \right]  \\ &\leq& L_{0}+ O(\log n) + 2 n(1- \varepsilon)\mathbb{P} \left( \sup_{0 \leq j \leq n} B_{j} \geq  \varepsilon^{2} n \right) + \varepsilon \mathbb{E}\left[\sum_{i=0}^{[(1- \varepsilon)n]} \ind_{ \pi(i)=1}\right].
  \end{eqnarray*}
By our second domination $\sup_{i} B_{i} \leq B_{0}+L_{0}+X_{n}$ and our goodness assumption $L_{0}+B_{0} = o (\sqrt{n}) + o(n) = o(n)$, we easily deduce that $\mathbb{P}( \sup_{0 \leq j \leq n} B_{j} \geq  \varepsilon^{2} n)$ decreases faster than $n^{-2}$. Therefore, after re-arranging the last display, we obtain $(1- \varepsilon)\mathbb{E}\left[\sum_{i=0}^{[(1- \varepsilon)n]} \ind_{ \pi(i)=1}\right] = o( \sqrt{n})$ as required.
  \endproof

\section{Peeling the minimal hole and number of vertices}\label{sec:minimalhole}
In this section we study the \emph{number of vertices} of $ \mathbf{M}_{ \mathcal{P}}$ and prove Theorem \ref{thm:vertices}. The corner stone of the proof is to explore our surface using the following algorithm.\\

\fbox{ \begin{minipage}{14cm}
\textbf{Algorithm $ \mathcal{H}$ or peeling the minimal hole}: Given $S_{i}$ for $0 \leq i <n$, the next edge to peel $ \mathcal{H}( S_{i})$ is one of the edges which belong to a \emph{hole of minimal perimeter}. If there are multiple choices, we pick the edge having the minimal label.
\end{minipage}}\medskip

It is clear that the above algorithm is Markovian and so the transitions of the Markov chain $(S_{i})_{i \geq 0}$ are described by Proposition \ref{prop:markov}. In the rest of this section, we always explore our random surfaces using  the above algorithm.
\subsection{Towards a single hole}
 \label{sec:Hprocess}
Fix a configuration $ \mathcal{P}$ with $ |\mathcal{P}|=n$ and let us explore the random surface $ \mathbf{M}_{ \mathcal{P}}$ using the peeling algorithm $ \mathcal{H}$. All our notations below depend implicitly on $ \mathcal{P}$.  A first observation concerns the number of holes during this exploration. If $H_{i} $ denotes the number of holes of $S_{i}$ then $H_{0} = \# \mathcal{P}$ and it is easy to see from the possible topology changes (Section \ref{sec:topologychanges}) that $$H_{i+1}-H_{i} \in \{-2,-1,0,+1\}.$$
Furthermore, if $H_{i+1}-H_{i} \in \{ 0,1\}$, then we have identified two edges of the same hole during the $i$-th peeling step. Since the minimal perimeter of a hole is at most $ \lfloor | \partial S_{i} | / {H}_{i} \rfloor $, if $ \mathcal{F}_{i}$ is the $\sigma$-field of the past exploration up to time $i$, we get from Proposition \ref{prop:markov} that 
 \begin{eqnarray} \label{eq:driftH} \mathbb{P}( H_{i+1}-H_{i} \in \{ 0,1\} \mid  \mathcal{F}_{i}) \leq \frac{ \left\lfloor |\partial S_{i}|/H_{i}\right\rfloor-1}{ |\partial S_{i}|-1 } \leq \frac{1}{H_{i}}.  \end{eqnarray}
In other words, as long as $H_{i}$ is large, the process $H$ undergoes a strong negative drift (it decreases by $1$ or $2$ with a high probability at each step).  If $\tau = \inf\{ i \geq 0 : H_{i} =1\}$ is the first time at which $S_{i}$ has a \emph{single hole}, then the above inequality together with easy probabilistic estimates show that $H$ hits the value $1$ "almost as soon as possible".
\begin{lemma}[$\tau \approx \# \mathcal{P}$] \label{lem:tau} For every $ \varepsilon>0$ we can find $C_{ \varepsilon}>0$ such that uniformly in $ \mathcal{P}$, we have 
$$ \mathbb{P}(\tau \geq (1+ \varepsilon) \# \mathcal{P} + C_{ \varepsilon}) \leq 	\varepsilon.$$
\end{lemma}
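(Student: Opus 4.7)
The plan is a Lyapunov-function argument. Write $N = \#\mathcal{P}$. Since $\Delta H_i \in \{-2,-1,0,+1\}$ one has $\Delta H_i \leq -1$ outside the ``bad'' event $A_i := \{\Delta H_i \geq 0\}$ (whose conditional probability is at most $1/H_i$ by \eqref{eq:driftH}). Setting $B := \sum_{i=0}^{\tau - 1} \ind_{A_i}$ and summing $\Delta H_i$ from $0$ to $\tau-1$ gives the deterministic estimate
$$
1 - N \;=\; H_\tau - H_0 \;\leq\; -(\tau - B) + B \;=\; -\tau + 2B,
\qquad \text{hence} \qquad \tau \leq N - 1 + 2B.
$$
Thus Lemma~\ref{lem:tau} reduces to showing $\mathbb{E}[B] = O(\log N)$ with a constant independent of $\mathcal{P}$: Markov's inequality applied to $B$ will then give $\tau \leq N - 1 + (2 \log N)/(c \varepsilon)$ with probability at least $1-\varepsilon$, and $(2\log N)/(c \varepsilon) \leq \varepsilon N + C_\varepsilon$ holds for a suitable $C_\varepsilon$ absorbing the small-$N$ regime.

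The key ingredient is the Lyapunov estimate
\begin{equation}\label{eq:planlyap}
\mathbb{E}[f(H_{i+1}) - f(H_i) \mid \mathcal{F}_i] \;\leq\; -\frac{c}{H_i} \qquad \text{whenever } H_i \geq 2,
\end{equation}
with $f(k) := \log k$ (extended by $f(0) := 0$) and a universal constant $c > 0$. For $H_i = k \geq 3$, the concavity bound $\log(1+x) \leq x$ combined with the drift estimate $\mathbb{E}[\Delta H_i \mid \mathcal{F}_i] \leq -1 + 2/k$ — itself a direct consequence of \eqref{eq:driftH} — yields $\mathbb{E}[\Delta f \mid \mathcal{F}_i] \leq (-1 + 2/k)/k$, and any $c \leq 1/3$ works. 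The delicate case is $k = 2$, where the bound $-1 + 2/k$ degenerates to $0$; here one would compute
$$ \mathbb{E}[\Delta f \mid H_i = 2] \;=\; -(p_{-2} + p_{-1}) \log 2 \,+\, p_{+1} \log\tfrac{3}{2} $$
and exploit the sharp constraints $p_0 + p_{+1} \leq 1/2$ (hence $p_{-2} + p_{-1} \geq 1/2$) and $p_{+1} \leq 1/2$ to obtain $\mathbb{E}[\Delta f \mid H_i = 2] \leq \tfrac{1}{2} \log \tfrac{3}{4}$, which is of the required form $-c/2$ with $c = \log(4/3)$. Taking $c = \log(4/3)$ secures \eqref{eq:planlyap} uniformly in $\mathcal{P}$.

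Granting \eqref{eq:planlyap}, the process $M_i := f(H_{i \wedge \tau}) + c \sum_{j=0}^{i \wedge \tau - 1} H_j^{-1}$ is a nonnegative supermartingale with $M_0 = \log N$, and $\tau \leq n$ is a bounded stopping time. Optional stopping combined with $f(H_\tau) = \log 1 = 0$ gives $c \, \mathbb{E}\bigl[\sum_{j<\tau} H_j^{-1}\bigr] \leq \log N$. Since $\mathbb{E}[B] = \mathbb{E}\bigl[\sum_{j<\tau} \mathbb{P}(A_j \mid \mathcal{F}_j)\bigr] \leq \mathbb{E}\bigl[\sum_{j<\tau} H_j^{-1}\bigr] \leq (\log N)/c$, the deterministic inequality above together with Markov completes the proof. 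The main obstacle is the Lyapunov inequality \eqref{eq:planlyap} at $H_i = 2$: the generic drift-over-$H_i$ bound is useless there, so one has to work directly with the four atoms of $\Delta H_i$ and extract a strictly negative constant from the sharp constraint $p_0 + p_{+1} \leq 1/2$; everything else (the deterministic reduction, optional stopping, and the final Markov step) is essentially automatic.
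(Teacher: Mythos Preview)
Your argument is correct and takes a genuinely different route from the paper. The paper uses a two-stage stochastic domination: fixing a threshold $A$ with $A/(A-2) \leq 1+\varepsilon$, it dominates the increments of $(H_i)$ by i.i.d.\ biased $\pm 1$ steps (with $\mathbb{P}(+1)=1/A$) while $H_i > A$, and by symmetric simple random walk steps once $H_i \leq A$, so that $\tau$ is stochastically bounded by the sum of the two corresponding hitting times $\alpha(\#\mathcal{P})+\beta$; the law of large numbers handles $\alpha$ and almost-sure finiteness of $\beta$ handles the remainder. You instead combine the deterministic inequality $\tau \leq N - 1 + 2B$ with the Lyapunov function $\log H$ and optional stopping to get $\mathbb{E}[B] \leq c^{-1}\log N$, and conclude by Markov. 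Your approach is in fact quantitatively sharper---it yields $\tau = N + O_P(\log N)$ rather than merely $\tau \leq (1+\varepsilon)N + O_\varepsilon(1)$---at the price of having to treat the case $H_i = 2$ by hand, where the generic drift bound $-1+2/H_i$ degenerates; the paper's threshold trick sidesteps small values of $H$ entirely by absorbing them into the recurrent symmetric-walk phase, a cruder but case-free device. (Both arguments tacitly ignore the degenerate transition $H\colon 2 \to 0$ that occurs when two loops are matched at the very last step $i=n-1$; this is harmless for the paper's applications but means the lemma is not literally uniform over all $\mathcal{P}$ without a convention such as replacing $\tau$ by $\tau \wedge n$.)
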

\proof Fix $ \varepsilon>0$ and pick $A \geq 1$ large so that $ \frac{A}{A-2} \leq (1 + \varepsilon)$. Let us define $\tau_{A} = \inf\{i \geq 0 : H_{i} \leq A\}$. Clearly, by the Markov property of the exploration and the above calculation, as long as $i \leq \tau_{A}$, the increments of $H$ are stochastically dominated by independent random variables $\xi_{i}$ with law $ \mathbb{P}(\xi=1) = \frac{1}{A}$ and $ \mathbb{P}(\xi = -1) = 1- \frac{1}{A}$. As a result, $\tau_{A}$ is stochastically dominated by $\alpha( \# \mathcal{P})$ the hitting time of $A$ by a random walk with i.i.d.~increments of law $\xi$ started from $\# \mathcal{P}$. Now, as soon as  $H_{i}$ drops below $A$ and until time $\tau$, we can stochastically bound its increments by those of a simple symmetric random walk on $ \mathbb{Z}$. If $\beta$ denotes the hitting time of $1$ by a simple symmetric random walk on $ \mathbb{Z}$ started from $A$, we can thus write the stochastic inequality $$ \tau \leq \alpha( \#\mathcal{P})+ \beta,$$ where the last two variables are independent (but we shall not use it). Notice that by the law of large numbers $\alpha ( \# \mathcal{P}) \sim \# \mathcal{P}  \frac{A}{A-2}$ as $\# \mathcal{P} \to \infty$. By our choice of $A$, this implies that $ \mathbb{P}( \alpha(\# \mathcal{P})\geq (1+ \varepsilon) \# \mathcal{P}) \leq \varepsilon/2$ for all $ \#\mathcal{P}$ larger than some $p_{0}\geq 1$. We then fix $ C_{ \varepsilon}$ large so that $ \mathbb{P}( \beta  \geq  C_{ \varepsilon}/2) \leq  \varepsilon/4$ and $\mathbb{P}(   \sup_{p \leq p_{0}} \alpha(p)\geq  C_{ \varepsilon}/2) \leq \varepsilon/4$. When doing so,  the statement of the lemma holds true. \endproof 

\paragraph{Back to one hole and unicellular maps.}  Performing the exploration of the surface with algorithm $ \mathcal{H}$ until time $\tau$ is particularly convenient. Indeed, $S_{\tau}$ has a single hole of perimeter $2(n-\tau)$ and by the Markov property of the exploration, to get the final surface $ \mathbf{M}_{ \mathcal{P}}$, one just needs to glue the edges of this hole using an independent uniform pairing of its edges. This is obviously a particular case of our construction of random surface for the case where $ \mathcal{P} = \{2(n-\tau)\}$ is made of a single polygon. The (obviously connected!) surface 
$ \mathbf{M}_{\{2k\}}$ is known as a \emph{unicellular map} with $k$ edges since it is a map having a single face. Using this observation, we can draw two conclusions: first, the surface $ \mathbf{M}_{ \mathcal{P}}$ is connected if and only if $S_{\tau}$ is connected. Second, if we  focus on the number of vertices $ \mathsf{V}_{ \mathcal{P}}$ of the underlying surface we can write 
 \begin{eqnarray} \label{eq:Xtau} \mathsf{V}_{ \mathcal{P}} = X_{\tau} + \mathsf{V}_{\{2(n-\tau)\}},  \end{eqnarray}
where $X_{\tau}$ is the number of ``true'' vertices of $S_{\tau}$ (i.e.~actual vertices of $ \mathbf{M}_{ \mathcal{P}}$) and $ \mathsf{V}_{\{ 2(n-\tau)\}}$ is the number of vertices of $ \mathbf{M}_{\{2(n-\tau)\}}$ a unicellular map with $n-\tau$ edges, which is conditionally on $\tau$, independent of $S_{\tau}$. Let us use these two remarks to study the connectedness and the number of vertices of $ \mathbf{M}_{ \mathcal{P}}$.
\subsection{Connectedness} 
Let us focus first on the connectivity of our surface.  We fix a good sequence $ (\mathcal{P}_{n})_{n \geq 1}$ of polygonal configurations and for each $n$, perform the exploration of $ \mathbf{M}_{ \mathcal{P}_{n}}$ using algorithm $ \mathcal{H}$. We write $\tau \equiv \tau_{n}$ to highlight the dependence in $ \mathcal{P}_{n}$ and $n$. Remark that if $( \mathcal{P}_{n})$ is a good sequence of configurations, then we have few loops and bigons and so    \begin{eqnarray}\frac{ \# \mathcal{P}_{n}}{n} \leq \frac{2}{3} \label{eq:2/3} \end{eqnarray} asymptotically as $ n \to \infty$. In particular, by Lemma \ref{lem:tau}, when performing the peeling using algorithm $ \mathcal{H}$, we reach a configuration having a single hole typically before $2n/3$ out of the $n$ peelings steps. By the last discussion, $ \mathbf{M}_{ \mathcal{P}_{n}}$ is connected if and only if we managed to reach time $\tau_{n}$ without having disconnected a piece of our surface en route. We prove that for a good sequence of configurations, this situation is very likely:
\begin{proposition} \label{prop:connected} If $( \mathcal{P}_{n})_{n \geq 1}$ is a good sequence of configurations, then $ \mathbf{M}_{ \mathcal{P}_{n}}$ is connected with high probability as $ n \to \infty$. \end{proposition}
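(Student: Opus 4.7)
The plan is to identify the precise moments at which a connected component of the evolving surface can become closed (i.e., lose all its boundary edges), since by the discussion preceding the proposition, $\mathbf{M}_{\mathcal{P}_n}$ is connected if and only if $S_\tau$ is, i.e., iff no closed component has appeared during the first $\tau$ $\mathcal{H}$-peeling steps. The argument splits into a case analysis pinpointing the dangerous events, followed by a union-bound computation fed by Proposition~\ref{prop:loopbigon} and Lemma~\ref{lem:tau}.

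A scan of the possible topology changes of Section~\ref{sec:topologychanges} shows that a component can lose a hole without gaining a new one only in two configurations:
\begin{itemize}
\item[(A)] $\pi(i)=2$ and the peeled edge is identified with the other edge of the same bigon, the degenerate version of Fig.~\ref{fig:case2} with $p'_{1}=p''_{1}=0$;
\item[(B)] $\pi(i)=1$ and the peeled loop is identified with another loop, i.e.~Fig.~\ref{fig:case5} (left).
\end{itemize}
Indeed, when $\pi(i)\geq 3$, identification with an edge on a different hole (Figs.~\ref{fig:case1} and~\ref{fig:case3}) merges the peeled hole with another into a hole of perimeter $\geq 1$; while identification with an edge on the same hole (Figs.~\ref{fig:case2} and~\ref{fig:case4}) splits it into two pieces with perimeters summing to $\pi(i)-2\geq 1$, so a positive-perimeter hole survives on the component. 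When $\pi(i)=2$ but the partner is not the other edge of the bigon, the bigon is absorbed into a hole of perimeter $\geq 1$ on the other side; and when $\pi(i)=1$ but the partner is not a loop, the loop is absorbed into a strictly larger hole, leaving a perimeter $\geq 1$ remainder. Hence the event $D_i$ that some component closes at step $i$ is contained in (A)$\cup$(B).

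By Proposition~\ref{prop:markov}, conditionally on $\mathcal{F}_i$ the partner of the peeled edge is uniform over the $2(n-i)-1$ remaining boundary edges, so
\[
\mathbb{P}(D_i\mid \mathcal{F}_i)\ \leq\ \frac{\ind_{\pi(i)=2}+(L_i-1)\ind_{\pi(i)=1}}{2(n-i)-1}.
\]
Because $\mathcal{P}_n$ is good, the polygons of perimeter $\geq 3$ force $3(\#\mathcal{P}_n-\mathsf{L}(\mathcal{P}_n)-\mathsf{B}(\mathcal{P}_n))\leq 2n$, hence $\#\mathcal{P}_n\leq 2n/3+o(n)$. Combined with Lemma~\ref{lem:tau} this gives $\tau\leq 3n/4$ with high probability, so on this event the denominators are all of order $n$. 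Summing over $i<\tau$, the (A) term is controlled using $\mathbb{E}\sum_{i}\ind_{\pi(i)=2}=o(n)$ from Proposition~\ref{prop:loopbigon}; and the (B) term is controlled by combining $\sup_i L_i=o(\sqrt{n})$ (holding with high probability, again by Proposition~\ref{prop:loopbigon}) with $\mathbb{E}\sum_{i\leq (1-\varepsilon)n}\ind_{\pi(i)=1}=o(\sqrt{n})$. Both contributions thus tend to $0$, which yields the claim.

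The main obstacle is really the case analysis producing (A) and (B): one has to verify that the vertex-creating split of Fig.~\ref{fig:case4} applied to a hole of perimeter $p\geq 3$ cannot close a component, since the residual piece of perimeter $p-2\geq 1$ keeps the component alive. Once this exhaustive list of closure events is in hand, the rest is a direct plug-in of the estimates from Proposition~\ref{prop:loopbigon} and Lemma~\ref{lem:tau}.
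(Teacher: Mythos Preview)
Your proof is correct and follows essentially the same route as the paper: identify that disconnection can only arise from the two closure events (bigon self-gluing or loop-to-loop gluing), bound their conditional probabilities by $\frac{1}{2(n-i)-1}(\ind_{\pi(i)=2}+L_i\ind_{\pi(i)=1})$, and control the sum up to time $3n/4\geq\tau$ using Proposition~\ref{prop:loopbigon} and Lemma~\ref{lem:tau}. Your case analysis is in fact slightly more explicit than the paper's, but the argument is the same.
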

\begin{remark} It is an exercise to prove that if $ \mathcal{P}$ either contains more that $ \varepsilon \sqrt{| \mathcal{P}|}$ loops or more than $ \varepsilon | \mathcal{P}|$ bigons, then there exists a constant $c_{ \varepsilon}>0$ such that $ \mathbb{P}( \mathbf{M}_{ \mathcal{P}} \mbox{ is not connected}) \geq c_{ \varepsilon}$ uniformly for all configurations.  Our assumptions for connectedness are thus optimal.
\end{remark}
\proof
We recall that $S_{0} \to S_{1} \to \cdots \to S_{n} = \mathbf{M}_{ \mathcal{P}_{n}}$ is the exploration of  $\mathbf{M}_{ \mathcal{P}_{n}}$ using the peeling algorithm $ \mathcal{H}$. By the list of the possible outcomes of a peeling step, we see that if $S_{\tau_{n}}$ is not connected, then at some time $0 \leq i < \tau_{n}$, we have performed a peeling step identifying either the two sides of a bigon or two loops together. %Notice that this event is not sufficient but only necessary to ensure non-connectedness of $ S_{\tau_{n}}$. 
We shall see that such operations are unlikely to happen before $\tau_{n}$ if we start with few loops and bigons. Recall that $ L_{i}$ (resp.~$B_{i}$) is the number of loops (resp.~bigons) in  $\partial S_{i}$ and that $\pi(i)$ is the perimeter of the hole peeled at time $i$. 
For each $0\leq i<n $, conditionally on the past of the exploration, the probability of either closing the two sides of a bigon or identify two loops together is 
 \begin{eqnarray*} \frac{1}{2(n-i)-1} \left(\ind_{\pi(i)=2} + L_{i}\ind_{\pi(i)=1}\right).  \end{eqnarray*}
Summing for all $0 \leq i \leq 3n/4$ and taking expectations, we deduce that there is a constant $C>0$ such that, for every $ \varepsilon >0$, we have 
\begin{eqnarray*} &&\mathbb{E}\left[ \sum_{i=0}^{[3n/4]} \frac{1}{2(n-i)-1} \left(\ind_{\pi(i)=2} + L_{i}\ind_{\pi(i)=1}\right) \right]\\  &\leq&  \frac{C}{n} \mathbb{E}\left[\sum_{i=0}^{[3n/4]} \ind_{\pi(i)=2}\right] + \frac{C}{n}\mathbb{E}\left[\sum_{i=0}^{[3n/4]} \ind_{\pi(i)=1} \sup_{j\leq n} L_{j}\right] \\
& \underset{ \mathrm{Prop.} \ref{prop:loopbigon}}{\leq}{} & o(1) + \frac{C}{n}\left( \varepsilon \sqrt{n}\mathbb{E}\left[\sum_{i=0}^{[3n/4]} \ind_{\pi(i)=1}\right] + n^{2} \mathbb{P}( \sup_{j \leq n} L_{j} \geq \varepsilon \sqrt{n}) \right) \\
& \underset{ \mathrm{Prop.} \ref{prop:loopbigon}}{\leq}{} &  o(1)+ C n \times \mathbb{P}( \sup_{j \leq n} L_{j} \geq \varepsilon \sqrt{n}) .  \end{eqnarray*}
But by Proposition \ref{prop:loopbigon} again, $C n \times \mathbb{P}( \sup_{j \leq n} L_{j} \geq \varepsilon \sqrt{n})$ goes to $0$ as $n \to \infty$ since by our goodness assumption we have $L_{0}= o( \sqrt{n})$. Hence the probability to perform an event which may yield to disconnection of the surface before time $3n/4$ is going to $0$. Since $\tau_{n} \leq 3n/4$ with high probability, by Lemma \ref{lem:tau}, the result is proved.\endproof 

It follows from the above proof that it is very unlikely that two loops are glued together before time $\tau_{n}$. If so, the number of holes cannot decrease by more than $1$ at each step and so with high probability we have 
  \begin{eqnarray} \label{eq:taunborneinf} \tau_{n} \geq \# \mathcal{P}_{n}-1,  \end{eqnarray}
  which complements the upper bound of Lemma \ref{lem:tau}. The above proof of connectedness of $ \mathbf{M}_{ \mathcal{P}_{n}}$ is probabilistic in essence and should be compared with the analytical proof of Chmutov \& Pittel \cite[Theorem 4.1]{chmutov2016surface} in the case when all perimeters are larger than $3$. The strategy followed by their proof is closer to that of Proposition \ref{prop:count_connected_maps}, with more involved calculations.

\subsection{Number of vertices and Theorem \ref{thm:vertices}}
We now turn to the proof of Theorem \ref{thm:vertices}, which is obviously based on \eqref{eq:Xtau}. We shall estimate separately the two contributions of \eqref{eq:Xtau} and start by controlling $ X_{ \tau_{n}}$, the number of vertices created during the exploration of the surface $ \mathbf{M}_{ \mathcal{P}_{n}}$ until time $\tau_{n}$:
\begin{lemma} \label{lem:Xtau}  If $( \mathcal{P}_{n})_{n \geq 1}$ is a good sequence of configurations, then
$$  \mathrm{d_{TV}}\left(X_{\tau_{n}}, \mathrm{Poisson}\left(  \log \frac{n}{n - \# \mathcal{P}_{n}}\right)\right) \xrightarrow[n\to\infty]{}0.$$
\end{lemma}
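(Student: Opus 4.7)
The strategy is to track the creation of true vertices step-by-step along the algorithm-$\mathcal{H}$ exploration of $\mathbf{M}_{\mathcal{P}_n}$, using the case analysis of Section~\ref{sec:topologychanges}. By Proposition~\ref{prop:markov}, true vertices are produced at step $i<\tau_n$ in exactly three mutually exclusive scenarios: (T) $\pi(i)\geq 3$ and the peeled edge is identified with one of the two edges adjacent to it along the hole, which happens with conditional probability $q_i:=\frac{2}{2(n-i)-1}$ and produces $+1$ vertex; (B) $\pi(i)=2$ and the bigon is closed on itself, with conditional probability $\frac{1}{2(n-i)-1}$, producing $+2$ vertices; (L) $\pi(i)=1$ and the loop is matched with another loop, with conditional probability $\frac{L_i-1}{2(n-i)-1}$, producing $+1$ vertex. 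Write $\tilde{X}_{\tau_n}$ for the number of (T)-events alone.

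First, I would show that $X_{\tau_n}=\tilde{X}_{\tau_n}$ with probability tending to $1$. Using \eqref{eq:2/3} together with Lemma~\ref{lem:tau}, one fixes $\varepsilon_0>0$ such that $\tau_n\leq (1-\varepsilon_0)n$ with probability $\geq 1-\varepsilon$; on this event $q_i\leq C/n$ for all $i<\tau_n$. Taking expectations of the (B)- and (L)-contributions and using Proposition~\ref{prop:loopbigon}: the (B) bound $\frac{C}{n}\mathbb{E}[\sum_i \ind_{\pi(i)=2}]=o(1)$ is immediate, and the (L) case is handled by truncating on $\{\sup_i L_i\leq M\}$ for a threshold $L_0\ll M\ll\sqrt n$; the complementary event has probability $o(1/n)$ thanks to the sub-exponential tails of the variable $X_n$ dominating $\sup_i L_i-L_0$.

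The predictable compensator of $\tilde X$ is $A_k:=\sum_{j<k} q_j\ind_{\pi(j)\geq 3}$; the same estimates yield $A_{\tau_n}=\sum_{j=0}^{\tau_n-1}q_j+o_\mathbb{P}(1)$, and a Riemann-sum computation gives $\sum_{j=0}^{\tau_n-1}q_j=\log\frac{n}{n-\tau_n}+O(1/n)$. Lemma~\ref{lem:tau} combined with \eqref{eq:taunborneinf} gives $\#\mathcal P_n-1\leq\tau_n\leq (1+\varepsilon)\#\mathcal P_n+C_\varepsilon$ with probability $\geq 1-\varepsilon$; combined with the lower bound $n-\#\mathcal P_n\geq (1/3-o(1))n$ from \eqref{eq:2/3}, this yields $|A_{\tau_n}-\lambda_n|=O(\varepsilon)$ on the same event, where $\lambda_n:=\log\frac{n}{n-\#\mathcal P_n}$.

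Finally, $\tilde X$ is a $\{0,1\}$-valued counting process with predictable intensities bounded by $C/n$, so a standard Poisson approximation---for instance via optional stopping at $\tau_n$ applied to the probability-generating-function martingale $M_k(z):=z^{\tilde X_k}\prod_{j<k}\bigl(1+(z-1)q_j\ind_{\pi(j)\geq 3}\bigr)^{-1}$---gives $\mathrm{d_{TV}}\bigl(\tilde X_{\tau_n},\mathrm{Poisson}(A_{\tau_n})\bigr)=O(\sum_{i<\tau_n}q_i^2)=O(1/n)$. Combined with the elementary inequality $\mathrm{d_{TV}}(\mathrm{Poisson}(\lambda),\mathrm{Poisson}(\mu))\leq|\lambda-\mu|$ and $\mathbb{E}|A_{\tau_n}-\lambda_n|\to 0$ (by boundedness of $\lambda_n$), this gives the lemma. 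The main technical obstacle lies in this last step: one must make the Poisson approximation robust under the random stopping time $\tau_n$ and random (but concentrated) compensator $A_{\tau_n}$, then convert the PGF/Laplace estimates into a total-variation bound. This is handled via the martingale argument above together with uniform tightness of $\mathrm{Poisson}(\lambda_n)$ since $\lambda_n$ is bounded.
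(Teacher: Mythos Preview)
Your strategy coincides with the paper's: the same (T)/(B)/(L) trichotomy for vertex creation along the $\mathcal H$-exploration, the same use of Proposition~\ref{prop:loopbigon}, Lemma~\ref{lem:tau} and \eqref{eq:taunborneinf} to discard (B), (L) and to localize $\tau_n\approx\#\mathcal P_n$, followed by a Poisson approximation for a sum of conditionally-Bernoulli increments with parameters $O(1/n)$. The paper implements this last step by coupling directly with independent $\mathrm{Bern}(q_i)$'s and replacing the random upper limit $\tau_n$ by the deterministic $\#\mathcal P_n$ (the mismatch involves $O(\varepsilon\,\#\mathcal P_n+C_\varepsilon)$ Bernoullis of parameter $O(1/n)$, hence vanishes), then applies Le~Cam; your compensator formulation is an equivalent packaging, though the line ``$\mathrm{d_{TV}}(\tilde X_{\tau_n},\mathrm{Poisson}(A_{\tau_n}))=O(1/n)$'' needs care since $A_{\tau_n}$ is random---the paper's coupling sidesteps this cleanly.

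There is however one concrete gap in your treatment of (L). Truncating on $\{\sup_i L_i\leq M\}$ alone is not sufficient: on that event the expected number of (L)-events before time $(1-\varepsilon_0)n$ is bounded by
\[
\frac{M}{2\varepsilon_0 n}\,\mathbb E\Bigl[\sum_{i\leq(1-\varepsilon_0)n}\ind_{\pi(i)=1}\Bigr],
\]
and without controlling this sum you only obtain $O(M\log(1/\varepsilon_0))$, not $o(1)$. You must also invoke the last estimate of Proposition~\ref{prop:loopbigon}, namely $n^{-1/2}\,\mathbb E\bigl[\sum_{i\leq(1-\varepsilon)n}\ind_{\pi(i)=1}\bigr]\to 0$, which together with $M=o(\sqrt n)$ gives the required $o(1)$. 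This is precisely how the paper handles the point, by citing the proof of Proposition~\ref{prop:connected}, where both ingredients are combined.
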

\begin{remark} \label{rem:tight} Notice that by \eqref{eq:2/3} the parameter in the Poisson law above asymptotically belongs to $[0, \log 3]$, which implies that $(X_{\tau_{n}})_{n \geq 1}$ is tight. This is the only fact we shall use to prove Theorem \ref{thm:vertices}.
\end{remark}
\proof Fix $n \geq1$ and perform the exploration of the surface $ \mathbf{M}_{ \mathcal{P}_{n}}$ using algorithm $ \mathcal{H}$ and stop at $\tau_{n}$. Using the description of the possible topology changes (Section \ref{sec:topologychanges}), for each $0 \leq i < n$, conditionally on $\mathcal{F}_i$, the number of vertices created by the next peeling step is 
$$ \left\{ \begin{array}{rcl}
1 &\mbox{ with probability }& \frac{2}{2(n-i)-1} \ind_{ \pi(i) \geq 3},\\
2 &\mbox{ with probability }& \frac{1}{2(n-i)-1} \ind_{ \pi(i)=2},\\
1 &\mbox{ with probability }& \frac{L_{i}-1}{2(n-i)-1} \ind_{ \pi(i)=1}.\\
\end{array}\right.$$
The proof of Proposition \ref{prop:connected} shows that the expected number of vertices created before time $3n/4$ by the last two possibilities is negligible as $n \to \infty$ and $\tau_{n} \leq 3n/4$ with high probability. Furthermore, by Proposition \ref{prop:loopbigon}, all but $o(n)$ peeling step take place on $p$-gons with $p \geq 3$. Since the sum of $o(n)$ Bernoulli variables with parameter bounded by $\frac{3}{n}$ is $0$ with high probability, we deduce that the law of $X_{\tau_{n}}$ is well approximated by 
$$ X_{\tau_{n}} \overset{ \mathrm{d_{TV}}}{\approx} \sum_{i=0}^{\tau_{n}} \mathrm{Bernoulli}\left( \frac{2}{2(n-i)-1}\right)\underset{ \mathrm{Prop. ~} \ref{prop:connected} \mathrm{\ and \ } \eqref{eq:taunborneinf}}{\overset{ \mathrm{d_{TV}}}{\approx}} \sum_{i=0}^{\# \mathcal{P}_{n}} \mathrm{Bernoulli}\left( \frac{1}{n-i}\right) \overset{\mathrm{d_{TV}}}{\approx} \mathrm{Poisson}\left(  \log \frac{n}{n - \# \mathcal{P}_{n}}\right).$$\qed 

Once $X_{\tau_{n}}$ is controlled, we need to get our hands on the other part of \eqref{eq:Xtau}, namely $ \mathbf{V}_{\{2n\}}$, which is the number of vertices of a unicellular map with $n$ edges. We prove the analog of our target result Theorem \ref{thm:vertices} for those random maps:
\begin{lemma} \label{lem:verticesunicellular}Let $ \epsilon_{n} \in \{ \mathrm{odd}, \mathrm{even}\}$ be the opposite parity of $n$. Then we have 
$$ \mathrm{d_{TV}}\left( \mathbf{V}_{\{2n\}}, \mathrm{Poisson}_{\log n}^{  \epsilon_{n}} \right) \xrightarrow[n\to\infty]{} 0.$$
\end{lemma}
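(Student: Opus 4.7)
The plan is to apply the peeling algorithm $\mathcal{H}$ directly to $\mathbf{M}_{\{2n\}}$ starting from the single $2n$-gon (which is a trivially good configuration with $L_{0}=B_{0}=0$ and $\#\mathcal{P}=1$), and to mimic the proof of Lemma~\ref{lem:Xtau} throughout the \emph{entire} exploration from step $0$ to step $n-1$. The parity constraint is automatic: Euler's formula applied to the unicellular map gives $\mathbf{V}_{\{2n\}}=n+1-2\,\mathsf{Genus}(\mathbf{M}_{\{2n\}})$, so $\mathbf{V}_{\{2n\}}$ always has the parity opposite to $n$, which is exactly $\epsilon_{n}$.

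With the notation of Section~\ref{sec:peeling}, I would write $\mathbf{V}_{\{2n\}}=\sum_{i=0}^{n-1}\xi_{i}$ where $\xi_{i}\in\{0,1,2\}$ is the number of true vertices created at step~$i$. From Section~\ref{sec:topologychanges}, conditionally on $\mathcal{F}_{i}$ the probability of creating a vertex equals $\tfrac{2}{2(n-i)-1}$ on the ``generic'' event $\{\pi(i)\geq 3\}$ (Fig.~\ref{fig:case4}), with smaller contributions from bigon closings and loop pairings on $\{\pi(i)\leq 2\}$ (Fig.~\ref{fig:case5}). The dominant term sums to $\sum_{i=0}^{n-1}\tfrac{2}{2(n-i)-1}=\log n+O(1)$, and an argument parallel to that of Lemma~\ref{lem:Xtau} should yield
\[ \mathbf{V}_{\{2n\}} \;\overset{\mathrm{d_{TV}}}{\approx}\; \sum_{i=0}^{n-1}\mathrm{Bern}\!\left(\tfrac{2}{2(n-i)-1}\right) \;\overset{\mathrm{d_{TV}}}{\approx}\; \mathrm{Poisson}(\log n), \]
which, combined with the parity observation above, upgrades to the conditional Poisson $\mathrm{Poisson}^{\epsilon_{n}}_{\log n}$.

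The hard part is controlling the Fig.~\ref{fig:case5} contributions. Proposition~\ref{prop:loopbigon} only gives $\sum_{i=0}^{n-1}\ind_{\pi(i)=2}=O(\log n)$ in expectation and $\mathbb{E}[\sum_{i\leq (1-\varepsilon)n}\ind_{\pi(i)=1}]=o(\sqrt{n})$, which bluntly leaves room for an $O(\log n)$ bias coming from Fig.~\ref{fig:case5} events, of the same order as the main Poisson term. The refinement I would use is that, conditionally on $\pi(i)=2$, the peeled bigon actually closes on itself only with probability $\tfrac{1}{2(n-i)-1}$ (the other edge of the bigon being one specific edge among $2(n-i)-1$ choices), and similarly the event of pairing two separate loops has conditional probability $\tfrac{L_{i}-1}{2(n-i)-1}\leq \tfrac{\sup_{j}L_{j}}{2(n-i)-1}$. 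Combined with $\sup_{j}L_{j}=o(\sqrt{n})$ from Proposition~\ref{prop:loopbigon}, this makes the expected number of vertices actually created via Fig.~\ref{fig:case5} be $O(1)$ rather than $O(\log n)$, so that it is absorbed in the total variation error. A standard Poisson approximation for sums of weakly dependent Bernoullis (exactly as in the proof of Lemma~\ref{lem:Xtau}) then closes the argument.
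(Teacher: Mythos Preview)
Your approach is genuinely different from all three proofs the paper gives. The paper either (i) cites Chmutov--Pittel, (ii) invokes the Harer--Zagier formula, or (iii) goes through uniform random maps: by Theorem~\ref{thm:maps==PDgluing} and duality one has $\#\mathsf{V}(\mathbb{M}_n^{\epsilon_n})\approx\mathrm{Poisson}^{\epsilon_n}_{\log n}$; exploring $\mathbf{M}_{\mathcal{U}_{2n}^{\epsilon_n}}$ with algorithm~$\mathcal{H}$ reaches a single hole at time $\tau_n=O(\log n)$ with no vertices created (Lemma~\ref{lem:Xtau}), so $\mathrm{Poisson}^{\epsilon_n}_{\log n}\approx\mathbf{V}_{\{2(n-\tau_n)\}}$, and a final coupling shows $\mathbf{V}_{\{2n\}}\approx\mathbf{V}_{\{2n'\}}$ when $|n-n'|=O(\log n)$. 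None of these runs the peeling argument of Lemma~\ref{lem:Xtau} through the whole unicellular exploration.

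There is a real gap in your argument, and it lies exactly where you suspect. You claim the expected number of Fig.~\ref{fig:case5} vertex creations is $O(1)$, arguing via $\sup_j L_j=o(\sqrt{n})$. But that bound only yields
\[
\sum_{i=0}^{n-1}\ind_{\pi(i)=1}\,\frac{L_i-1}{2(n-i)-1}\;\leq\;o(\sqrt{n})\sum_{i=0}^{n-1}\frac{\ind_{\pi(i)=1}}{2(n-i)-1},
\]
and Proposition~\ref{prop:loopbigon}'s last estimate only controls $\sum_{i\leq(1-\varepsilon)n}\ind_{\pi(i)=1}$, giving nothing over the final $\varepsilon n$ steps where the weights $\tfrac{1}{2(n-i)-1}$ range up to~$1$. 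The crucial point is that \emph{essentially all} of the $\log n$ vertices are born in that final window: $\sum_{i\leq(1-\varepsilon)n}\tfrac{2}{2(n-i)-1}=\log\tfrac{1}{\varepsilon}+o(1)$ is bounded, whereas $\sum_{i>(1-\varepsilon)n}\tfrac{2}{2(n-i)-1}=\log n+O(1)$. So the entire Poisson signal lives precisely where your estimates do not reach. This is not an accident: it is why the paper's decomposition $\mathbf{V}_{\mathcal{P}}=X_\tau+\mathbf{V}_{\{2(n-\tau)\}}$ separates out the unicellular tail and handles it by a \emph{different} method; in the unicellular case $\tau=0$ and the content of Lemma~\ref{lem:Xtau} is vacuous.

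Your approach may well be salvageable, but not with Proposition~\ref{prop:loopbigon} as a black box. One would have to exploit algorithm~$\mathcal{H}$ specifically: since it always peels the smallest hole, any loop or bigon is destroyed within $O(1)$ steps of its creation, and creation at step~$i$ has probability $O(\tfrac{1}{n-i})$, suggesting $\mathbb{P}(\pi(i)\leq 2)=O(\tfrac{1}{n-i})$ and hence $\sum_i O((n-i)^{-2})=O(1)$ mismatches with the independent-Bernoulli model. Making this rigorous, together with the fact that Le~Cam's inequality gives only an $O(1)$ bound on $\mathrm{d_{TV}}(\sum_i\mathrm{Bern}(\tfrac{2}{2i-1}),\mathrm{Poisson}(\log n))$ because the last few parameters are not small, requires substantially more than ``an argument parallel to that of Lemma~\ref{lem:Xtau}''.
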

\proof \textit{First proof.} This result is a straightforward consequence of the general theorem of Chmutov \& Pittel \cite{chmutov2016surface}, once we recalled that the number of cycles of a random permutation of $ \mathfrak{S}_{n}$ is close in total variation to a Poisson random variable of parameter $\log n$.\\
\textit{Second proof.} The well-known Harer--Zagier formula \cite{HZ86} precisely gives access to the generating function of the number of vertices of unicellular maps. This formula has been exploited to give a local limit theorem for $ \mathbf{V}_{\{2n\}}$ in \cite[Theorem 3.1]{chmutov2013genus}. Combining this local limit theorem with the explicit distribution of a Poisson random variable of parameter $\log n$ yields the result (we leave the straightforward calculations to the courageous reader).\\
\textit{Sketch of a third proof.} The above lines may disappoint the reader who expected that our results are ``self-contained'' and do not rely on any algebraic method. Let us explain how to get the lemma without relying on \cite{chmutov2016surface} nor \cite{HZ86}. The idea is to directly show that the number of vertices of a uniform unicellular map with $n$ edges is close (in total variation distance) to the number of vertices of a uniform map with $n$ edges, provided that it has the same parity as $n+1$. To fix ideas, let us suppose that $n$ is even. Let us thus consider $ \mathbb{M}_{n}^{  \mathrm{odd}}$, or by Theorem \ref{thm:maps==PDgluing}, up to a negligible error in total variation distance, $\mathbf{M}_{ \mathcal{U}_{2n}^{\mathrm{odd}}}$ where $ \mathcal{U}_{2n}^{ \mathrm{odd}}$ is the random variable $ \mathcal{U}_{2n}$ conditioned on $\# \mathcal{U}_{2n}$ being odd. We have
 \begin{eqnarray} \# \mathsf{V}\left(\mathbb{M}_{n}^{  \mathrm{odd}}\right)   &\underset{ \mathrm{duality}}{=}  &
 \# \mathsf{F}\left(\mathbb{M}_{n}\right) \mathrm{\ cond.~on~being~odd} \nonumber \\ &\underset{ \mathrm{Thm} \ref{thm:maps==PDgluing}}{\overset{ \mathrm{d_{TV}}}{\approx}}& \# \mathcal{U}_{2n}\mathrm{\ cond.~on~being~odd}  \nonumber \\
 &\underset{ \eqref{eq:countpoisson}}{\overset{ \mathrm{d_{TV}}}{\approx}} &\mathrm{Poisson}^{ \mathrm{odd}}_{ \log n}. \label{eq:verticesodd} \end{eqnarray}
On the other hand, let us  explore $ \mathbf{M}_{ \mathcal{U}_{2n}^{ \mathrm{odd}}}$ using algorithm $ \mathcal{H}$ until time $\tau_{n}$. Since the number of polygons of $ \mathcal{U}_{2n}^{ \mathrm{odd}}$ is typically of order $\log n$, we get from Proposition \ref{prop:connected} and \eqref{eq:taunborneinf} that $\tau_{n} \approx \log n$ and from Lemma \ref{lem:Xtau} that with high probability, no vertex has been created by the exploration until time $\tau_{n}$. Hence, we can write
$$  \mathrm{Poisson}^{ \mathrm{odd}}_{ \log n} \quad \overset{ \mathrm{d_{TV}}}{\underset{\eqref{eq:verticesodd}}{\approx}} \quad   \mathbf{V}_{\mathcal{U}_{2n}^{ \mathrm{odd}}} \quad  \overset{ \mathrm{d_{TV}}}{\underset{\begin{subarray}{c} \eqref{eq:Xtau}\\ \mathrm{Lem.} \ref{lem:tau}\\ \mathrm{Lem.} \ref{lem:Xtau} \end{subarray}}{\approx}} \quad \mathbf{V}_{\{2(n-\tau_{n})\}}.$$
We are almost there. Notice first that, with high probability, since we have not created vertices up to time $\tau_{n}$, then $\tau_{n}$ is even and so is $n-\tau_{n}$. The above line shows that the number of vertices of a unicellular map with a \emph{random} number of edges $n-\tau_{n}$ is close in total variation to our goal $ \mathrm{Poisson}^{ \mathrm{odd}}_{\log n}$. To finish the proof, it remains to see that if $n'>n$ have the same parity and $ n'-n = O( \log n)$, then $ \mathbf{V}_{\{2n\}} \approx \mathbf{V}_{ \{2n'\}}$ in total variation distance. To see this, we will couple the two discrete surfaces $ \mathbf{M}_{\{2n\}}$ and $ \mathbf{M}_{\{2n'\}}$ so that they have the same number of vertices with high probability. The idea will be to couple their explorations using algorithm $ \mathcal{H}$ in such a way that they are independent until some stopping time $\xi$, and coincide afterwards. The key is that during these explorations, the number of holes is a process which spends most of its time on small values. More precisely, let us explore independently $ \mathbf{M}_{ \{2n\}}$ and $ \mathbf{M}_{\{2n'\}}$ using algorithm $ \mathcal{H}$ and denote by $H$ and $H'$ the processes of the number of holes in each exploration. Notice that as long as we do not create vertices (which happens only after $ \approx n$ steps by Lemma \ref{lem:Xtau}), the numbers $H_{i}$ and $H'_{i}$ have the same parity. We will be interested in the stopping time
\[\xi = \inf \{i \geq n'-n : H'_{n'-n+i} = H_{i} =1 \}.\]
By \eqref{eq:driftH} (see also the proof of Lemma \ref{lem:tau}), the processes $(H_i)$ and $(H'_{n'-n+i})$ are dominated (up to time $ \approx n$) by two independent copies $X$ and $X'$ of a positive recurrent process, where $X$ and $X'$ have the same parity. In particular, there is a small $i$ such that $X_i=X'_i=1$. It follows that $\xi$ happens quickly in the sense that 
$$ \mathbb{P}( \xi \geq C\log n \mbox{ and  no vertex has been created by then in either surfaces}) \xrightarrow[n \to \infty]{} 1.$$
Therefore, the surfaces $S_{\xi}$ and $S'_{n'-n+\xi}$ both have exactly one hole, with the same size $2(n-\xi)$. Hence, on the event described above, we can couple $ \mathbf{M}_{ \{2n\}}$ and $ \mathbf{M}_{\{2n'\}}$ by identifying their explorations from time $\xi$ and $\xi + n'-n$ respectively on. In particular, when this coupling occurs, we have $\mathbf{V}_{\{2n\}} = \mathbf{V}_{ \{2n'\}}$ as desired. We leave the details to the interested reader. \qed 

\proof[Proof of Theorem \ref{thm:vertices}] Recalling \eqref{eq:Xtau} we write $\mathsf{V}_{ \mathcal{P}_{n}} = X_{\tau_{n}} + \mathsf{V}_{\{2(n-\tau_{n})\}}$ where conditionally on $\tau_{n}$ the variables $ \mathbf{V}_{\{2(n-\tau_{n})\}}$ and $X_{\tau_{n}}$ are independent. Hence 
 \begin{eqnarray*}
 \mathsf{V}_{ \mathcal{P}_{n}} &=& X_{\tau_{n}} + \mathsf{V}_{\{2(n-\tau_{n})\}}\\
 & \underset{  \begin{subarray}{c}  \mathrm{Lem.} \ref{lem:verticesunicellular}\end{subarray}}{ \overset{ \mathrm{d_{TV}}}{\approx}} & X_{\tau_{n}} + \mathrm{Poisson}_{\log (n- \tau_{n})}^{ \mathrm{parity}( n-\tau_{n}+1)}\\ 
  & \underset{\begin{subarray}{c} \mathrm{Lem.}\ref{lem:tau} \end{subarray}}{ \overset{ \mathrm{d_{TV}}}{\approx}} &  X_{\tau_{n}} + \mathrm{Poisson}_{\log n}^{ \mathrm{parity}(n-\tau_{n}+1)}\\ 
 & \underset{\begin{subarray}{c} \mathrm{Lem.} \ref{lem:Xtau} \end{subarray}}{ \overset{ \mathrm{d_{TV}}}{\approx}} &  \mathrm{Poisson}_{\log n}^{ \mathrm{parity}( X_{\tau_{n}}+n-\tau_{n}+1)}.  \end{eqnarray*}
The second-to-last equality uses the fact that $\mathrm{Poisson}_{\log n}$ and $\mathrm{Poisson}_{\log (n-i)}$ are close for $ \mathrm{d_{TV}}$ uniformly over $0 \leq i \leq \frac{3n}{4}$. The last one uses the fact that $X_{\tau_n}$ is tight and for any fixed $i$, the variables $\mathrm{Poisson}_{\log n}$ and $i+\mathrm{Poisson}_{\log n}$ are close for $ \mathrm{d_{TV}}$ as $n \to +\infty$. Finally, if $ \mathbf{M}_{ \mathcal{P}_{n}}$ is connected, by applying the Euler formula to $S_{\tau_n}$, it is easy to check that $ X_{\tau_{n}}+n-\tau_{n}+1$ has the same parity as $ n + \# \mathcal{P}_{n}$.  We just landed on the desired Poisson variable having the correct parity. \endproof

\section{Peeling vertices and the Poisson--Dirichlet universality}
\label{subsec:PD_universal}
In this section we prove the Poisson--Dirichlet universality, that is our Theorem \ref{thm:weakPD}. Again, the idea is to explore our random surface via a peeling algorithm tailored to our objective. Since we are interested in the vertex degrees, our algorithm will explore the $1$-neighborhood of a given vertex. Once all the edges adjacent to this vertex have been discovered, we choose a new vertex on the boundary of the current surface and iterate. More precisely: \medskip 

\fbox{ \begin{minipage}{14cm}
\textbf{Algorithm $\mathcal{R}$ or peeling vertices}: Given the initial configuration of  labeled polygons $S_{0}\equiv \mathcal{P}$ we pick a ``red'' vertex $ \mathfrak{R}_{0}\in \partial S_{0}$ uniformly at random. Inductively, given the discrete surface $S_{i}$ with a distinguished ``red'' point  $ \mathfrak{R}_{i} \in  \partial S_{i}$, we peel the edge lying immediately on the left of $ \mathfrak{R}_{i}$ to get $S_{i+1}$. If during this peeling step the red vertex has been swallowed by the process, then we resample $ \mathfrak{R}_{i+1} \in   \partial S_{i+1}$ uniformly at random (independently of the past and of the gluing). Otherwise $ \mathfrak{R}_{i+1}$ canonically results of $ \mathfrak{R}_{i}$.
\end{minipage}}

\medskip

It is easy to see that the above algorithm is again Markovian and so we can apply Proposition \ref{prop:markov}. We shall always use the above algorithm when exploring our surfaces in this section.

\subsection{Closure times and targeting distinguished vertices}

If $S_{i}$ is a discrete surface with a red vertex $ \mathfrak{R}_{i} \in \partial S_{i}$, then the peeling of the edge immediately on the left of $\mathfrak{R}_{i}$ leaves the red vertex on the boundary of $S_{i+1}$ except in two cases: 
\begin{itemize}
\item if the peeled edge is glued to the edge immediately on the right of $\mathfrak{R}_{i}$, see Figure \ref{fig:case4}. We say that time $i$ is then a \emph{strong closure time};
\item or if the red vertex $\mathfrak{R}_{i}$ belongs to a  hole of perimeter $1$, which is glued to another hole of perimeter $1$, see Figure~\ref{fig:case5} left. In this case, we say that $i$ is a \emph{loop closure time}.
\end{itemize}
Notice that in both cases the peeling step yields the creation of at least $1$ true vertex of $ \mathbf{M}_{ \mathcal{P}}$ (this can be $2$ if the peeling step closes the two sides of a bigon). There is actually another scenario which yields to the creation of a true vertex at time $i$: if the peeled edge is identified to the edge immediately on its left. We call these times \emph{weak closure times} and we will see in the proof that the vertices created there have a low degree and so do not affect the Poisson--Dirichlet universality. We will denote by  $$0 = \theta^{(0)} \leq \theta^{(1)} < \theta^{(2)} < \cdots$$ the strong closure times with the convention that $\theta^{(0)}=0$. If $ |\mathcal{P}|=n$, then we have 
 \begin{eqnarray} \label{eq:sctvrai}\mathbb{P}( i \mbox{ is a strong closure time} \mid \mathcal{F}_{i}) = \frac{1}{2(n-i)-1} \ind_{\mathfrak{R}_{i} \mbox{ is not on a loop }}.  \end{eqnarray}
In the rest of this section, we will be interested in asymptotic properties as $| \mathcal{P}| \to \infty$ so as expected, we fix a good sequence of configurations $( \mathcal{P}_{n})_{n \geq 1}$ and explore $ \mathbf{M}_{ \mathcal{P}_{n}}$ using the peeling algorithm $ \mathcal{R}$. To highlight the dependence in $n$ we write $\theta_{n}^{(i)}$ for the strong closure times. To describe the degree of the vertices created by strong closure times and their connectivity it will be convenient to work with ``finite dimensional marginals''. Specifically, imagine that we number arbitrarily the vertices on $S_{0}$ from $1$ up to $2n$ and keep track of these labels during the exploration procedure. More precisely, these labels are ``merged'' if two vertices coalesce and they can disappear from $ \partial S_{i}$ if they are swallowed during a strong, weak or loop closure time. We denote by $\sigma_{n}^{(j)}$ the time at which the vertex carrying the label $j$ is identified with the red vertex. If the label  $j$ disappears before being glued to the red vertex, then we write $\sigma_{n}^{(j)}= \infty$ by convention.
\begin{proposition}[Closure times and Poisson--Dirichlet of parameter $1/2$] \label{prop:closure} Let $( \mathcal{P}_{n})_{n\geq 1}$ be a good sequence of configurations. We have the convergence in distribution in the sense of finite dimensional marginals
$$\left( \left( \frac{\theta_{n}^{(i)}-\theta_n^{(i-1)}}{n-\theta_{n}^{(i-1)}}\right), \left( \frac{\sigma_{n}^{(i)}}{n}\right) \right)_{i \geq 1} \xrightarrow[n\to\infty]{}   
\big(T_{i}, S_{i}\big)_{i \geq 1},$$
where $(T_i)_{i \geq 1}$ and $ (S_{i})_{i \geq 1}$ are  i.i.d. random variables with distribution $\beta \left( 1, \frac{1}{2} \right)$, i.e. variables with density $ \displaystyle \frac{1}{2 \sqrt{1-t}} \mathbbm{1}_{t \in [0,1]}$ with respect to the Lebesgue measure.  \end{proposition}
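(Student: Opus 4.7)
My plan is to exploit the Markov property of the peeling exploration (Proposition~\ref{prop:markov}) together with the explicit hazard rate \eqref{eq:sctvrai} for strong closures, and to establish a parallel hazard rate $\tfrac{1}{2(n-i)-1}$ for the absorption of any given non-red label into the current red vertex at step~$i$. In both cases the telescoping product
$$\prod_{i=0}^{\lfloor tn\rfloor-1}\Bigl(1-\tfrac{1}{2(n-i)-1}\Bigr)=\prod_{k=n-\lfloor tn\rfloor+1}^{n}\tfrac{2k-2}{2k-1}\xrightarrow[n\to\infty]{}\sqrt{1-t}$$
gives exactly the tail of a $\beta(1,1/2)$ distribution. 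For a good sequence, the indicator $\ind_{\mathfrak{R}_i\text{ not on a loop}}$ in \eqref{eq:sctvrai} can be treated as $1$: by Proposition~\ref{prop:loopbigon} one has $\sup_i L_i=o(\sqrt n)$ with high probability, and since $\mathfrak{R}_i$ is uniform on $\partial S_i$ after each resampling, the probability that $\mathfrak{R}_i$ sits on a loop at step~$i$ is $O(L_i/(n-i))$, summable along the exploration up to $(1-\varepsilon)n$. Combined with the product above this gives $\theta_n^{(1)}/n\Rightarrow\beta(1,1/2)$, and iterating via Proposition~\ref{prop:markov}---after the $k$-th strong closure the exploration restarts on a surface with $n-\theta_n^{(k)}$ remaining edges and a freshly resampled uniform red vertex---yields the joint convergence of the $T_i$ to i.i.d.\ copies.

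For the label times I use exchangeability of the labels together with the following observation: conditional on the past, the partner of the peeled edge is uniform on the remaining $2(n-i)-1$ boundary edges, and the cluster of a label $j$ is absorbed into the current red at step~$i$ exactly when the endpoint of this partner that gets identified with $\mathfrak{R}$ under the orientation-preserving gluing lies in $j$'s cluster; this conditional probability equals $d_j(i)/(2(n-i)-1)$, where $d_j(i)$ counts the corresponding ``tail-corners'' of $j$'s cluster. A case-by-case inspection of the peeling operations of Section~\ref{sec:topologychanges} shows that, away from loops and bigons, every merge involves one-cornered clusters on a common hole and yields again a one-cornered cluster---in particular the splitting of Figure~\ref{fig:case2} sends its two merged clusters onto two disjoint new holes---so $d_j(i)=1$ for all $i<\sigma_n^{(j)}$ with probability $1-o(1)$. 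The same product computation then gives $\mathbb{P}(\sigma_n^{(j)}/n>t)\to\sqrt{1-t}$. Asymptotic independence of finitely many $\sigma_n^{(j)}$ and $\theta_n^{(i)}$ follows from the same product argument applied simultaneously to several constraints on the partner: the joint survival probability per step reads $1-\ell_i/(2(n-i)-1)$ for some bounded $\ell_i$, and the cross-terms between distinct constraints contribute only $O(1/n)$ to the limit (because $\sum_i 1/(n-i)^2=O(1/n)$), so the joint tail factorizes as $\prod_j\sqrt{1-s_j}\cdot\prod_i\sqrt{1-t_i}$.

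The main technical obstacle I foresee is the rigorous verification of $d_j(i)=1$ throughout the life of a typical label, which requires careful bookkeeping of how the various peeling operations modify the corner structure of boundary clusters, in particular for the splitting operation of Figure~\ref{fig:case2}. The remaining pieces---passing from marginal to joint convergence and absorbing the negligible effect of loops and bigons for good sequences---are more routine and follow from Proposition~\ref{prop:loopbigon} combined with the Markov property.
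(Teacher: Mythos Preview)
Your overall strategy matches the paper's: compute per-step hazard rates, telescope to get the $\beta(1,\tfrac12)$ tail, restart after each strong closure to obtain the i.i.d.\ structure of the $T_i$, and factorize joint survival probabilities for independence. However, you have misidentified where the main technical obstacle lies.

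The claim $d_j(i)=1$ is in fact \emph{deterministically} true: an easy induction on peeling steps shows that every temporary boundary vertex carries exactly one boundary corner throughout the exploration (in each of Figures~\ref{fig:case1}--\ref{fig:case3}, each merged vertex inherits exactly one incoming and one outgoing boundary edge). So this is not the hard part.

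The genuine obstacle, which you dismiss as ``more routine'', is that your hazard rate $\tfrac{1}{2(n-i)-1}$ for label~$j$ fails precisely when $j$ sits immediately to the left of~$\mathfrak{R}_i$: the unique edge with head at $j$'s vertex is then the peeled edge itself, so the absorption probability is~$0$, and if a \emph{weak closure} occurs at that step then $j$ is swallowed and $\sigma_n^{(j)}=\infty$. This is \emph{not} a loops-and-bigons phenomenon---weak closures happen on holes of arbitrary perimeter~$\ge 3$---so Proposition~\ref{prop:loopbigon} does not directly control it. The paper handles this in Lemma~\ref{lem:weakclosing} via a recursion establishing $\mathbb{P}(j\text{ immediately left of }\mathfrak{R}_i)=O(n^{-1/2})$ uniformly in $i\le(1-\varepsilon)n$; the bigon bound does enter that recursion, but in a less direct way than your sketch suggests. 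The same lemma also rules out two fixed labels coalescing (needed for the asymptotic independence of the~$\sigma_n^{(j)}$) and the red vertex being resampled onto a fixed label after a closure---two further perturbations your plan does not mention.

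A smaller point: your claim that $\mathbb{P}(\mathfrak{R}_i\text{ on a loop})=O(L_i/(n-i))$ is only valid immediately after a resampling; between resamples the red vertex's hole evolves and can shrink to a loop. The correct control is $\sum_{i\le(1-\varepsilon)n}\ind_{\pi(i)=1}=o(\sqrt{n})$ in expectation, from the last estimate of Proposition~\ref{prop:loopbigon} (cf.\ Lemma~\ref{lemma:peuloops}).
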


\proof[Preparation to the proof.] To prepare the reader to the proof, we first concentrate on an approximate model for which the result is easy to get. We assume we are in an ideal situation, i.e. that there are no loop closure times and that for each time $0 \leq i \leq n$, the red vertex $ \mathfrak{R}_{i}$ has conditional probability exactly 
$$ \frac{1}{2(n-i)-1}$$ (to be compared to \eqref{eq:sctvrai}) to be swallowed in a strong closure time. We furthermore assume that the $k$ labels $1,2,3,\dots,k$ are always carried by distinct vertices and each of them has probability exactly $\frac{1}{2(n-i)-1}$ of being glued to the red vertex (and thus to disappear) at time $i$, provided it has not been glued to the red vertex so far. We also suppose that the above events (strong closure time, or gluing of one of the remaining labels) are mutually exclusive (to be precise, one should stop the process when $k+1 > 2(n-i)-1$). We denote by $\tilde{\theta}^{(i)}_{n}$ and $\tilde{\sigma}_{n}^{(1)}, \dots , \tilde{\sigma}_{n}^{(k)}$ the associated strong closure times and gluing times.
We first prove the convergence of $\left( \frac{\tilde{\theta}_{n}^{(i)}}{n} \right)_{i \geq 1}$. For every $x \in (0,1)$, we have
\begin{equation}\label{eqn:limit_beta}
\mathbb{P}(\tilde{\theta}^{(1)}_n \geq xn) = \prod_{i=0}^{[xn]} \left(1- \frac{1}{2(n-i)-1}\right) \xrightarrow[n \to \infty]{} \sqrt{1-x},
\end{equation}
which gives the convergence of $\frac{\tilde{\theta}^{(1)}_n}{n}$ to $T_1$. Now conditionally on $\tilde{\theta}^{(1)}_n=xn$, the process $( \tilde{\theta}^{(i+1)}_n )_{i \geq 1}$ has the same distribution as $( xn + \tilde{\theta}^{(i)}_{(1-x)n} )_{i \geq 1}$ %, so $\left( \frac{\tilde{\theta}_{n}^{(i+1)}-\tilde{\theta}_n^{(i)}}{n-\tilde{\theta}_{n}^{(i)}} \right)_{i \geq 1}$ has the same distribution as $\left( \frac{\tilde{\theta}^{(i+1)}_{(1-x)n}-\tilde{\theta}^{(i)}_{(1-x)n}}{(1-x)n-\tilde{\theta}^{(i)}_{(1-x)n}} \right)_{i \geq 1}$. 
and by an easy induction, we obtain the convergence of the first term in Proposition 23. Moreover, the computation \eqref{eqn:limit_beta} also shows, for every $i$, the convergence of $n^{-1} \cdot \tilde{\sigma}_n^{(i)}$ to a variable with distribution $\beta ( 1,\frac{1}{2} )$. Therefore, it remains to prove that in the limit, the variables $\tilde{\sigma}_n^{(i)}$ are independent of each other and of $( \tilde{\theta}_n^{(i)} )_{i \geq 1}$. For this, it is easy to see that conditionally on $( \tilde{\theta}_n^{(i)} )_{i \geq 1}$, they have the same distribution as i.i.d. variables $\tilde{\tilde{\sigma}}_n^{(i)}$ conditionned to be pairwise distinct and different from the $\tilde{\theta}_n^{(i)}$. On the other hand, for any fixed $i \ne j$, the probability that $\tilde{\tilde{\sigma}}_n^{(i)}=\tilde{\tilde{\sigma}}_n^{(j)}$ or $\tilde{\tilde{\sigma}}_n^{(i)}=\tilde{\theta}_n^{(j)}$ goes to $0$ as $n \to +\infty$. Therefore, this conditioning has negligible effect in total variation distance, which proves the claim. \qed
\bigskip

Compared to the above ideal situation, the true exploration has the following differences:
\begin{itemize}
\item strong closure times are ruled by \eqref{eq:sctvrai} and thus are perturbed when $ \mathfrak{R}_{i}$ is on a loop,
\item some labels $1,2,3,\dots$ could be merged together before being glued to the red vertex,
\item some labels $1,2,3,\dots$ could even be swallowed by the process (by a weak closure time) before being glued to the red vertex,
\item when resampling $ \mathfrak{R}_{i}$ after a closure time, we may identify a given label to the red vertex. 
\end{itemize}
Those annoying situations will be ruled out with high probability in the next section thus reducing our exploration model to the above ideal situation. Before doing that, let us show how Proposition \ref{prop:closure} implies our Theorem \ref{thm:weakPD}.

\paragraph{Proof of Theorem \ref{thm:weakPD}.}
With the notation of Proposition \ref{prop:closure}, we denote by $v_n^{(i)}$ the vertex of $\mathbf{M}_{\mathcal{P}_n}$ created by the $i$-th strong closure time $\theta_n^{(i)}$. Let $j$ be a fixed label, and consider the unique $i$ such that 
\begin{equation}
\theta_n^{(i-1)} \leq \sigma_n^{(j)} < \theta_n^{(i)}.
\end{equation}
As explained above, the possibility that the vertex label $j$ is swallowed by a weak or a loop closure time will be ruled out later (Lemma \ref{lem:weakclosing}). Therefore, the label $j$ will eventually land on the vertex $v_i^{(n)}$ in the end of the exploration.

The idea of the proof will be to pick $k$ edges uniformly at random on the polygons of $\mathcal{P}_n$ and to label their endpoints as $1,2,\dots,2k$. After the gluing, these edges result in $k$ uniform edges of $\mathbf{G}_{ \mathcal{P}_{n}}$. Note that since we are working with a good sequence of configurations, the probability for one of these $k$ edges to be a loop is $o(\sqrt{n})$, so with high probability the labeled vertices are pairwise distinct before the exploration. Given the graph $ \mathbf{G}_{ \mathcal{P}_{n}}$, let $ \mathtt{G}_{k,n}$ be the multi-graph induced by these $k$ uniform edges by throwing out all isolated points (this graph may be disconnected). Then the graph $ \mathtt{G}_{k,n}$ is completely described by the relative order of the numbers $\sigma_n^{(j)}$ and $\theta_n^{(i)}$, which is itself described by Proposition \ref{prop:closure}. Since relative order does not change if we apply to these numbers an increasing function, one can apply the map
$$ \phi : x \in[0,1] \mapsto \sqrt{1-x} \in [0,1]$$
to the numbers $n^{-1}\sigma_{n}^{(j)}$ and $ n^{-1}\theta^{(i)}_{n}$. Then Proposition \ref{prop:closure} and an easy calculation show that they converge respectively towards $U_{1},U_{2}, \dots$ independent uniform random variables on $[0,1]$ and towards $V_{1}, V_{2}(1-V_{1}), V_{3}(1-V_{1})(1-V_{2}), \ldots$ where $(V_{i} : i \geq 1)$ are independent uniform random variables on $[0,1]$. In other words, $(n^{-1}\theta_n^{(i)})_{i \geq 1}$ converges towards the stick breaking construction of the Poisson--Dirichlet partition $(X_i)_{i \geq 1}$. Hence, the limit of $\mathtt{G}_{k,n}$ as $n \to \infty$ can be described by the variables $(U_j)$ and the Poisson--Dirichlet partition $(X_i)$. More precisely, consider the following random graph: start with a Poisson--Dirichlet partition $(X_{i} : i \geq 1)$ of $[0,1]$ and throw $k$ independent pairs $(U_{j}^{(1)}, U_{j}^{(2)})$ of independent uniform random variables on $[0,1]$. If $U_{j}^{(1)}$ falls in the interval $X_{a}$ and $U_{j}^{(2)}$ in $X_{b}$, then add an edge between the vertices $a$ and $b$ (this may be a loop). Denote the induced subgraph (by throwing out the isolated vertices) by $ \mathtt{G}_{k,\infty}$. This is a particular case of the construction \cite[Example 7.1]{janson2018edge}.

Since an oriented edge of $ \mathbf{M}_{ \mathcal{P}_{n}}$ is obtained by picking two consecutive vertices on $S_{0}$, Proposition \ref{prop:closure} implies that
\begin{equation}\label{eqn:convergence_k_edges}
\forall k \geq 1, \quad \mathtt{G}_{k,n} \xrightarrow[n\to\infty]{(d)} \mathtt{G}_{k,\infty}.
\end{equation}
The end of the proof is now quite standard: given $\mathbf{G}_{\mathcal{P}_n}$, the number $[i,j]_{\mathtt{G}_{k,n}}$ of edges of $\mathtt{G}_{k,n}$ joining the vertices $v_i^{(n)}$ and $v_j^{(n)}$ is a binomial variable $\mathrm{Bin} \left( k, n^{-1} [i,j]_n \right)$. Therefore, for $k$ large, uniformly in $n$, the variable $k^{-1}[i,j]_{\mathtt{G}_{k,n}}$ is concentrated around $n^{-1} [i,j]_n$. On the other hand, by \eqref{eqn:convergence_k_edges}, we have
\[ k^{-1} [i,j]_{\mathtt{G}_{k,n}} \xrightarrow[n \to +\infty]{} k^{-1} \mathrm{Bin} \left( k, X_i X_j  \right) \xrightarrow[k \to +\infty]{} X_i X_j,\] in distribution.
 The convergence $[i,j]_n \to X_i X_j$ when $n \to +\infty$ follows.

\begin{remark}[Hypergraph extension.]\label{rem:hypergraph} The above scheme of proof can be pushed further when our configuration has a positive density of triangles, quadrangles, etc. For example, let us consider the case when $ \mathcal{P}_{n}$ is made of $n/3$ triangles only\footnote{supposing that $n$ is divisible by $3$} as in \cite{BM04}. With the same notation as in the introduction, if we denote by $[i,j,k]_{ \mathbf{G}_{ \mathcal{P}_{n}}}$ the number of triangles whose endpoints are the $i$-th, $j$-th and $k$-th largest degree vertices, then we have
$$ \left(\frac{[i,j,k]_{n}}{n/3} : i,j,k \geq 1\right) \xrightarrow[n\to\infty]{(d)} \left( X_{i}\cdot X_{j} \cdot X_{k} : i,j,k \geq 1\right).$$
From this convergence, one can deduce limit laws for the number of triangles incident to the origin vertex, for the total number of  ``folded'' triangles i.e. whose three apexes are confounded and so on. 
\end{remark}

\subsection{A few bounds}
In this section we rule out the bad situations described after Proposition \ref{prop:closure} to reduce its proof to the ideal situation already considered.  We start with controlling loop closure times and the time spent on peeling loops.
\begin{lemma} \label{lemma:peuloops} If $( \mathcal{P}_{n})_{n \geq 1}$ is a good sequence of configurations and if $ \mathtt{Lct}_{n}$ is the first loop closure time, with the convention that $ \mathtt{Lct}_{n} =n$ if there are no such times, then
$$ \frac{ \mathtt{Lct}_{n}}{n} \xrightarrow[n\to\infty]{( \mathbb{P})} 1 \quad \mbox{and} \quad \frac{1}{n}  \sum_{i=0}^{ n-1} \ind_{ \mathfrak{R}_{i} \mbox{ is on a loop}}  \xrightarrow[n\to\infty]{( \mathbb{P})} 0.$$
\end{lemma}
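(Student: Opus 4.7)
The key observation is that in algorithm $\mathcal{R}$ the peeled edge is, by construction, the edge immediately on the left of $\mathfrak{R}_i$, and therefore lies on the same hole as $\mathfrak{R}_i$. Hence the events $\{\mathfrak{R}_i \text{ is on a loop}\}$ and $\{\pi(i)=1\}$ coincide, and both claims reduce to controls on $\sum_i \ind_{\pi(i)=1}$ that are already available from Proposition~\ref{prop:loopbigon} applied to the Markovian algorithm $\mathcal{R}$.

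For the second convergence, I would fix $\varepsilon>0$ and split the sum at time $(1-\varepsilon)n$. The last estimate of Proposition~\ref{prop:loopbigon} gives $\mathbb{E}[\sum_{i=0}^{[(1-\varepsilon)n]} \ind_{\pi(i)=1}] = o(\sqrt{n})$, so by Markov's inequality this partial sum is $o(n)$ with probability tending to $1$, while the remaining $\varepsilon n$ indices contribute trivially at most $\varepsilon n$. Since $\varepsilon>0$ is arbitrary, the convergence follows.

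For the first convergence, I would use that, conditionally on $\mathcal{F}_i$, a loop closure at step $i$ occurs with probability $(L_i - 1)/(2(n-i)-1) \cdot \ind_{\pi(i)=1}$, since the peeled loop-edge must be identified with one of the $L_i - 1$ other loop-edges. A union bound over $i \leq (1-\varepsilon)n$ then yields
\[
\mathbb{P}(\mathtt{Lct}_n \leq (1-\varepsilon) n) \leq \frac{1}{2\varepsilon n - 1}\, \mathbb{E}\left[\sup_j L_j \cdot \sum_{i=0}^{[(1-\varepsilon)n]} \ind_{\pi(i)=1}\right].
\]
The one delicate point is to estimate this expectation without losing too much to the factor $\sup_j L_j$. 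I would truncate on the event $\{\sup_j L_j \leq \mathsf{L}(\mathcal{P}_n) + (\log n)^{10}\}$, whose complement has super-polynomially small probability by the stochastic domination in Proposition~\ref{prop:loopbigon} combined with a standard Chernoff bound on the dominating sum of independent Bernoullis. On this truncation event the prefactor becomes a deterministic $o(\sqrt{n})$ (thanks to the goodness hypothesis $\mathsf{L}(\mathcal{P}_n) = o(\sqrt{n})$), and multiplying by the $o(\sqrt{n})$ bound on the expectation of the sum yields $o(n)$; dividing by $2\varepsilon n - 1$ produces the required $o(1)$ bound. The off-event contribution is crudely bounded by $n^2 \cdot \mathbb{P}(\sup_j L_j > M_n)$ and is negligible.
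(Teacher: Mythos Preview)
Your proof is correct and follows essentially the same route as the paper: the identification $\{\mathfrak{R}_i\text{ is on a loop}\}=\{\pi(i)=1\}$ reduces the second claim directly to the last estimate of Proposition~\ref{prop:loopbigon}, and for the first claim both you and the paper use the union bound on the conditional probability $\ind_{\pi(i)=1}(L_i-1)/(2(n-i)-1)$, pull out $\sup_j L_j$, and truncate on the size of this supremum. The only cosmetic difference is the truncation level: the paper cuts at $\varepsilon\sqrt{n}$ whereas you cut at $\mathsf{L}(\mathcal{P}_n)+(\log n)^{10}$; both require (and both obtain, via the Bernoulli domination of Proposition~\ref{prop:loopbigon}) a tail bound on $X_n$ strong enough to kill the polynomial prefactor on the complementary event.
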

%\begin{remark} Actually, typically the first loop closure time should not appear before time $n- \log n$ and there are only of order $\log n$ times where the red vertex is on a loop so the above control are very rough, but will suffice for our applications. \end{remark}
\proof The second convergence is a straightforward consequence of the fourth convergence of Proposition \ref{prop:loopbigon}. To prove the first one, conditionally on the past exploration up to time $i$, the probability to perform a loop closure time is equal to $ \ind_{\pi(i)=1}  \frac{L_{i}-1}{2(n-i)-1}$. Hence, summing over $ 0 \leq i \leq (1- \varepsilon)n$, taking expectation and splitting according to whether $\sup L_{i} \geq \varepsilon \sqrt{n}$, we get for every $\eps>0$:
 \begin{eqnarray*} \mathbb{P}( \mathtt{Lct}_{n} \leq (1- \varepsilon)n) &\leq&  \mathbb{E}\left[\sum_{i=0}^{[(1- \varepsilon)n]}  \ind_{\pi(i)=1} \frac{\sup_{0 \leq j \leq n} L_{j}}{2(n-i)-1}\right] \\
  & \leq & \frac{1}{\sqrt{n}} \cdot \mathbb{E}\left[\sum_{i=0}^{[(1- \varepsilon)n]}  \ind_{\pi(i)=1} \right] + n \mathbb{P} \left( \sup_{0 \leq j \leq n} L_{j} \geq \varepsilon \sqrt{n} \right).  \end{eqnarray*}
By our goodness assumption and Proposition \ref{prop:loopbigon}, this tends to $0$.
\endproof

We will need to rule out a few other annoying situations. Assume we track a distinguished label during the exploration, say the  label $1$. Note that the only case where this label is never glued to the red vertex is if it is swallowed at some point by a weak closure time. Part of Proposition \ref{prop:closure} is that this situation does not occur (this is important since it ensures that the edges of $ \mathbf{M}_{ \mathcal{P}_{n}}$ are concentrated on the vertices closed at strong closure times). We will also prove that with high probability, two fixed distinguished labels do not coalesce before being glued to the red vertex (this is important to rule out strong correlations between the times $\sigma_{n}^{(j)}$).
\begin{lemma}\label{lem:weakclosing}
Assume that $( \mathcal{P}_{n})_{n \geq 1}$ is a sequence of good configurations and perform the exploration of $ \mathbf{M}_{ \mathcal{P}_{n}}$ using algorithm $ \mathcal{R}$ after having labeled the vertices of $ S_{0}$ by $\{1,2, \dots , 2n\}$ arbitrarily (independently of the matching of the edges). Then for every $ \varepsilon>0$, with high probability as $n \to \infty$, none of the following events occur before time $(1- \varepsilon)n$:
\begin{enumerate}
\item the label  $1$ disappears before being glued to the red vertex; \label{lem_weak_closing_negligible} 
\item the labels $1$ and $2$ coalesce before being glued to the red vertex; \label{lem_no_correlation_s}
\item the red vertex is moved to the vertex  carrying the label $1$ after some strong closure time. 
\end{enumerate}
\end{lemma}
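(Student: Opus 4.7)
The plan is to prove all three parts by combining the exchangeability of the labeling with sharp counts of the peeling events of each type. Since the labeling of $S_{0}$ is arbitrary and independent of the matching, we may take it uniformly random; then for any combinatorial state of the exploration and any vertex $V \in \partial S_i$, the probability that label $1$ sits on $V$ equals $|G_V(i)|/2n$, where $|G_V(i)|$ denotes the number of initial corners merged into $V$. The counting side will use Proposition \ref{prop:loopbigon} to neutralize loops and bigons, Lemma \ref{lem:tau} to bound the number of closure events, and Proposition \ref{prop:markov} to exploit the uniformity of each partner edge.

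For Part 1, I would first observe that label $1$ can disappear only via a \emph{non-red} closure --- a weak closure (swallowing the vertex at position $L$), a two-loop closure (swallowing the opposite loop's vertex), or a bigon closing (swallowing the non-red bigon corner) --- since disappearance at a strong closure would force label $1$ to have been previously glued to the red vertex. Loop and bigon contributions before time $(1-\varepsilon)n$ are $O(1/\varepsilon)$ in expectation by Proposition \ref{prop:loopbigon}, while weak closures occur with conditional probability $\frac{1}{2(n-i)-1}$ per step, contributing $O(\log(1/\varepsilon))$ events in total. Each such non-red closure swallows a vertex whose current merge-size equals its final degree; a completion-time argument based on the uniform-partner property of Proposition \ref{prop:markov} forces any true vertex completed before $(1-\varepsilon)n$ to have degree $O(1/\varepsilon)$ with high probability, bounding the cumulative corner count by a quantity $O(1/\varepsilon^{2})$. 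By the exchangeability observation above, $\mathbb{P}(\text{label } 1 \text{ swallowed non-redly before } (1-\varepsilon)n) = O(1/(\varepsilon^{2} n)) \to 0$.

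For Part 2, each non-closure peeling step performs exactly one non-red merge, joining the group at position $L$ with the group at position $R'$. I would study the random forest on $\{1,\ldots,2n\}$ generated by these merges and restricted to times strictly before either label $1$ or label $2$ has been glued to the red vertex: this forest carries at most $n$ edges among $2n$ labels, and a size-biased calculation --- using that $R'$ is the endpoint of a uniform random partner edge --- yields that the expected size of label $1$'s connected component is $O(1)$. Exchangeability among the remaining labels then gives $\mathbb{P}(\text{label 2 lies in this component}) = O(1/n) \to 0$. Part 3 is handled by a short union bound: by Lemma \ref{lem:tau} and Proposition \ref{prop:loopbigon} the number of closure events before $(1-\varepsilon)n$ is bounded (uniformly in $n$), while at each resampling the new red vertex is uniform on $\partial S_{i+1}$ of size at least $2\varepsilon n$; since Parts 1--2 keep $|G_1|$ bounded with high probability, landing on label $1$'s vertex has probability $O(1/n)$ per closure, summing to $o(1)$. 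The principal technical obstacle is the completion-time estimate in Part 1: one must genuinely use Proposition \ref{prop:markov} to rule out the atypical scenario in which a high-degree true vertex is finalised much earlier than the end of the exploration.
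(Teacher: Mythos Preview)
Your strategy is quite different from the paper's, and the step you yourself flag --- the completion-time argument in Part~1 --- is a genuine gap rather than a routine detail.

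The paper never tries to bound the degree of a weakly-swallowed vertex. Instead it introduces
\[
\alpha_i := \mathbb{P}\bigl(\text{label }1\text{ is carried by the vertex immediately to the left of }\mathfrak{R}_i\bigr)
\]
and derives a one-step recursion for $\alpha_i$ by case analysis of step $i-1$ (resample after a closure; partner is a bigon; partner is a loop; generic step). This yields $\alpha_i\le C_\varepsilon/\sqrt n$ uniformly for $i\le(1-\varepsilon)n$, whence Part~1 is just $\sum_i \alpha_i/(2(n-i)-1)=o(1)$. Part~2 then reuses $\alpha_i$: for labels $1$ and $2$ to coalesce non-redly at step $i$, one of them must be at position $L_i$ and the partner edge must be a single specific edge, giving the same sum. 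Part~3 is the elementary bound $\sum_{i\le(1-\varepsilon)n}\frac{1}{(2(n-i)-1)(2(n-i)-3)}$, with no reference to $|G_1|$ at all.

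The difficulty with your route is that, by the very exchangeability you invoke, $\mathbb{E}|G_{L_i}|=2n\,\alpha_i$; so bounding the merge-size of the vertex swallowed at a weak closure is \emph{equivalent} to bounding $\alpha_i$, not a shortcut around it. I do not see how the uniform-partner property of Proposition~\ref{prop:markov} alone delivers ``degree $O(1/\varepsilon)$ with high probability'' without tracking how the vertex at position $L_i$ was built up --- which is exactly the $\alpha_i$-recursion. The same obstacle hits your forest argument for Part~2: the non-red merge forest is neither uniform nor exchangeable (one endpoint of every merge is pinned at the specific position $L_i$), so a size-biased calculation does not apply off the shelf. Two smaller points on Part~3: Lemma~\ref{lem:tau} concerns algorithm~$\mathcal{H}$, not~$\mathcal{R}$, and cannot be invoked here; and Parts~1--2 do not in fact bound $|G_1|$ --- but this is harmless, since after a strong closure the resampled red vertex lands on label~$1$'s position with probability exactly $1/|\partial S_{i+1}|$, independently of how large label~$1$'s group is.
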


Of course, once the lemma is proved for labels $1$ and $2$, it easily extends to the labels $1,2,\dots, k$ for any fixed $k$.

\proof We start with the first item. Fix $ \varepsilon>0$. In the event $A_{i}$ where the label $1$ is swallowed at time $i$ by a weak closure time, this label is necessarily carried by the vertex immediately to the left of $ \mathfrak{R}_{i}$ and a weak closure time happens at time $i$. Hence 
  \begin{eqnarray} \label{eq:1bouffe} \mathbb{P}( A_{i}) \leq \frac{1}{2(n-i)-1} \mathbb{P}( 1 \mbox{ is carried by the vertex on the left of } \mathfrak{R}_{i}).  \end{eqnarray}
We write $\alpha_{i} = \mathbb{P}( 1 \mbox{ is carried by the vertex on the left of } \mathfrak{R}_{i})$.
%Saying that the labels are uniformly distributed is not enough because we do not know how many labels are carried by the vertex on the left of $\mathfrak{R}_{i}$.
Then we can estimate $\alpha_{i}$ for $i \geq 1$ by looking at the peeling step $i-1$ as follows:
\begin{itemize}
\item Either the $(i-1)$-th peeling step swallows the red vertex and the new one is sampled uniformly on the boundary. In this case, the probability that $ \mathfrak{R}_{i}$ is on the right of the label $1$ is 
$$ \frac{1}{2(n-i)}.$$
\item Either the $(i-1)$-th peeling step glues a bigon on the left of $ \mathfrak{R}_{i-1}$ and, if the label $1$ was already on the left of $\mathfrak{R}_{i-1}$, it stays on the left of $ \mathfrak{R}_{i}$. The conditional probability of this scenario is thus bounded above by 
$$ \frac{2 B_{i-1}}{2(n-i)+1} \mathbbm{1}_{1 \mbox{ is carried by the vertex on the left of } \mathfrak{R}_{i-1}} .$$
\item Or the $(i-1)$-th peeling step glues a loop on the left of $ \mathfrak{R}_{i-1}$ and if the label was on the second vertex on the left of $ \mathfrak{R}_{i-1}$, then it becomes immediately on the left of $ \mathfrak{R}_{i}$. We can crudely bound the conditional probability of this event by 
$$  \frac{L_{i}}{ 2(n-i)+1}.$$
\item In all other situations, in order for the label $1$ to be on the left of $ \mathfrak{R}_{i}$, the $(i-1)$-th peeling step should identify the peeled edge with the second edge on the right of the label $1$, which occurs with probability 
$$ \frac{1}{2(n-i)+1}.$$
\end{itemize}
In total, for any $i \leq (1- \varepsilon)n$, taking expectation and then splitting according to whether $ \sup_{0 \leq j \leq n} L_{j} \geq \varepsilon^2  \sqrt{n}$ and $\sup_{0 \leq j \leq n} B_{j} \geq \varepsilon^2  n$, we have for large $n$ and $0 \leq i \leq (1-\eps)n$
 \begin{eqnarray*} \alpha_{i} &\leq& \frac{2}{2(n-i)} +  \mathbb{E}\left[\frac{2 B_{i-1}}{2(n-i)+1} \mathbbm{1}_{1 \mbox{ is carried by the vertex on the left of } \mathfrak{R}_{i-1}}\right] + \mathbb{E}\left[\frac{L_{i}}{ 2(n-i)+1}\right] \\
 & \leq & \frac{C}{n} + 2 \varepsilon \cdot \alpha_{i-1} + \mathbb{P}( \sup B_{j} \geq \varepsilon^2 n)   + \frac{ \varepsilon}{ \sqrt{n}} + \mathbb{P}( \sup L_{j} \geq  \varepsilon^2 \sqrt{n}). \end{eqnarray*}
Using Proposition \ref{prop:loopbigon}, and by our goodness assumption, we see that the two probabilities in the right-hand side are negligible compared to $ \frac{ \varepsilon}{ \sqrt{n}}$ for large $n$ and so we get 
$$ \alpha_{i} \leq \frac{2 \varepsilon}{ \sqrt{n}} + 2 \varepsilon \cdot \alpha_{i-1}.$$ When $2 \varepsilon <1$ this easily implies that $\alpha_{i} \leq C_{\eps}/ \sqrt{n}$ uniformly in $i \leq (1- \varepsilon)n$ as $n \to \infty$ for some constant $C>0$ depending on $ \varepsilon$. Plugging this back in \eqref{eq:1bouffe}, we obtain $\PP(A_{i}) \leq \frac{C_{\eps}}{n^{3/2}}$ for $i \leq (1-\eps)n$, so $\PP \left( \bigcup_{i=0}^{(1-\eps)n} A_i \right)$ goes to $0$ as $n \to \infty$. 

For the second item, if we want the labels $1$ and $2$ to merge in the same vertex, then one of the two, say $1$, must be immediately on the left of $ \mathfrak{R}_{i}$ at time $i$ and then the peeling step should identify the edge on the left of $ \mathfrak{R}_{i}$ with the edge on the left of the label $2$. This has probability $\alpha_{i} \frac{1}{2(n-i)-1}$ and the last calculation shows that after summing over $i \leq (1- \varepsilon)n$ we get a negligible contribution.

The third item is the most obvious:  the probability that $i$ is a strong closure time and that the red vertex is moved to the label $1$ is at most 
$$ \frac{1}{2(n-i)-1} \times \frac{1}{2(n-i)-3},$$
and the result follows by summing over $i \leq (1- \varepsilon)n$. \qed

\bigskip 

\noindent \textsc{Thomas Budzinski,\\ DMA, ENS Paris, 75005 Paris, France}\\ \\
\noindent \textsc{Nicolas Curien,\\ Laboratoire de Math\'ematiques d'Orsay, Univ. Paris-Sud, CNRS, Universit\'e Paris-Saclay, 91405 Orsay, France}\\ \\
\noindent \textsc{Bram Petri,\\ Mathematisches Institut der Universit\"at Bonn, 53115 Bonn, Germany}

\bibliographystyle{siam}
\bibliography{bibli-total}

\end{document}